\documentclass[a4paper,11pt]{amsart}
\usepackage[utf8]{inputenc}
\usepackage{verbatim}
\usepackage{amssymb}
\usepackage{amsmath, amsthm, amsfonts}
\usepackage{color} 

\newtheorem{theorem}{Theorem}[section]
\newtheorem{definition}[theorem]{Definition}
\newtheorem{prop}[theorem]{Proposition}
\newtheorem{lemma}[theorem]{Lemma}
\newtheorem{corollary}[theorem]{Corollary}
\newtheorem{remark}{Remark}

\newcommand{\vecz}{\mathbf{z}}
\newcommand{\vecw}{\mathbf{w}}
\newcommand{\vecu}{\mathbf{u}}
\newcommand{\dm}{\mathrm dm_{n,p}^\hbar}
\newcommand{\dz}{\mathrm d\vecz}
\newcommand{\dzb}{\mathrm d\overline\vecz}
\newcommand{\E}{\mathcal E_{n,p}}
\newcommand{\T}{{\bf T}_{n,p}}
\newcommand{\To}{{\mathcal T}}
\newcommand{\F}{{}_0\mathrm F_1}

\newcommand{\Sn}{\mathbf{S}^n}
\newcommand{\vecx}{\mathbf{x}}
\newcommand{\vecy}{\mathbf{y}}
\newcommand{\vtheta}{\boldsymbol{\theta}}
\newcommand{\ds}{\mathrm d\mathbf S_n}
\newcommand{\bk}{{\bf k}}
\newcommand{\bm}{{\bf m}}
\newcommand{\K}{{{\bf K}_{n,p}^\hbar}}
\newcommand{\Ku}{{{\bf K}_{n,-1}^\hbar}}

\newcommand{\nrs}{{{\bf H}_{n}}}
\newcommand{\B}{{{\mathfrak B}_{\hbar,p}}}
\newcommand{\Be}{{{\mathfrak B}}}
\newcommand{\Pro}{{{\mathbf P}}}

\newcommand{\balpha}{\boldsymbol \alpha}
\newcommand{\bbeta}{\boldsymbol \beta}

\newcommand{\U}{{\mathbf U_{n,p}}}
\newcommand{\A}{{\mathcal A_{n,p}^\hbar}}


\begin{document}
\title[Berezin symbols on the complex unit sphere]
{Berezin symbols of operators on the  unit sphere of $\mathbb C^n$}
\author{Erik I. D\'{\i}az-Ort\'iz}
\address{CONACYT Research Fellow – Universidad Pedagógica Nacional - Unidad 201 Oaxaca}
\email{eidiazor@conacyt.mx}

\maketitle
\tableofcontents
\begin{abstract}
We describe the symbolic calculus of  operators on the unit sphere in the complex $n$-space $\mathbb C^n$ defined by the Berezin quantization. In particular, we derive a explicit formula for the composition of Berezin symbol and with that a noncommutative star product. In the way is necessary introduce a  holomorphic spaces  which admit  a reproducing kernel in the form of generalized hypergeometric series.
\end{abstract}

\section{Introduction and summary}
We start recalling some results of Berezin's theory that will be used below. See Ref. \cite{B74} for details.

Let $H$ be a Hilbert space endowed with the inner product  $(\cdot,\cdot)$ and $M$ some set with the measure $\mathrm d \mu$. Let $\{e_\alpha \in H\;|\; \alpha \in M\}$ a family of functions in $H$ labelled by elements of $M$, such that satisfies the following properties:
\begin{enumerate}
\item[a) ] The family $\{e_\alpha\}$ is complete, this is for any $f,g \in H$ Parseval's identity is valid
\begin{displaymath}
(f,g)= \int_M (f,e_\alpha)(e_\alpha,g) \mathrm d \mu(\alpha)\;.
\end{displaymath}
\item[b) ] The map $f \to \hat f$, defining by $\hat f(\alpha)= (f,e_\alpha)$, is an embedding from $H$ into $L^2(M,\mathrm d\mu)$.
\end{enumerate}

In 1970’s, Berezin \cite{B74} introduced a general symbolic calculus for bounded linear operators on $H$. More specifically, for $A \in \mathbf B (H)$, the algebra of all bounded linear operator on $H$, the Berezin symbol (or Berezin transform) of $A$ is the function on $M$ defined by
\begin{displaymath}
\Be( A) (\alpha)= \frac{(Ae_\alpha,e_\alpha)}{(e_\alpha,e_\alpha)}\;, \hspace{0.5cm} \alpha \in M\;.
\end{displaymath}
The prototypes of the spaces $H$ are the Bergman spaces of all holomorphic functions in $L^2(M, \mathrm d\mu )$ on a bounded domain $M\subset \mathbb C^n$ with Lebesgue measure $\mathrm d\mu$, or the Segal-Bargmann spaces of all entire functions in $L^2(\mathbb C^n,\mathrm d \mu)$ for the Gaussian measure $\mathrm d\mu (z) = (2\pi)^{-n} e^{-|z|^2/2}\mathrm dz \mathrm d\overline z$, where $\mathrm dz \mathrm d\overline z$ denoting Lebesgue measure on $\mathbb C^n$. In these cases the functions $e_\alpha$ are the reproducing kernel.

Moreover, for every $A,B \in \mathbf B(H)$ and $f \in H$, we have the following formulas
\begin{align}
\Be(A+cB) & = \Be( A) + c \Be( B)\;, \mbox{ for all constant } c \in \mathbb C\;,\nonumber\\
\Be(\mathrm{Id}) & =1\;,\hspace{0.5cm} \mbox{with $\mathrm{Id}$ the identity operator }.\nonumber
\end{align}
If we suppose that the Berezin's symbol may be extended in a neighbourhood of the diagonal $M \times M$ to the function
\begin{displaymath}
\Be( A)(\alpha,\beta)= \frac{(Ae_\alpha,e_\beta)}{(e_\alpha,e_\beta)}\;,
\end{displaymath}
then we have the following formulas
\begin{align}
\Be( A^*)(\alpha,\beta) & = \overline{\Be(A)(\beta,\alpha)} \;, \hspace{0.5cm} \alpha,\beta\in M\;.\nonumber\\
\widehat{Af}(\alpha) & = \int_M \hat f(\beta) \Be(A)(\beta,\alpha) (e_\beta,e_\alpha) \mathrm d \mu(\beta)\;, \label{Eq 1}\\
\Be(AB)(\alpha,\beta) & = \int_M \Be(A)(\gamma,\beta) \Be(B)(\alpha,\gamma) \frac{(e_\alpha,e_\gamma)(e_\gamma,e_\beta)}{(e_\alpha,e_\beta)} \mathrm d\mu(\gamma)\;. \label{Eq 2}
\end{align}

The useful application of this symbolic calculus is that allows us to build a star product. 
In \cite{B74} Berezin applied this method to K\"ahler manifolds, in this case $H$ is the Hilbert space of functions in $L^2(M,\mathrm d\mu)$ which are analytic so that the embedding from $H$ into $L^2(M,\mathrm d\mu)$ is the inclusion, and the complete family $\{e_\alpha\}$ are the reproducing kernel with one variable fixed.

The main goal of the present paper is to introduce a Berezin symbolic calculus for the Hilbert space $\mathcal O$ of all functions in $L^2(\Sn)$ whose Poisson extension into the interior of $\Sn$ is holomorphic, where $\Sn=\{\vecx\in\mathbb C^n\;|\; |x_1|^2+\cdots+|x_n|^2=1\}$ and  $L^2(\Sn)$ denote the Hilbert space of the square integrable function with respect to the normalized surface measure $\ds(\vecx)$ on $\Sn$ and endowed with the usual inner product
\begin{equation}
 \langle \phi,\psi\rangle _{\Sn}= \int_{\Sn} \phi(\vecx) \overline{\psi(\vecx)} \ds(\vecx)\;,\;\hspace{0.5cm} \phi,\psi \in L^2(\Sn)\;.
\end{equation}

To achieve this,  we build a family $\{e_\alpha\} $ of functions in $\mathcal O $ satisfying a), since coherent states meet this property and based of our experience in $ L^2(S^m) $, $ m = 2,3,5 $, where $ S^m =\{\vecy \in \mathbb R^{m + 1} \; | \; y_1^2 + \cdots + y_{m + 1}^2 = 1 \} $ (see Ref. \cite{D-V09}), we propose in Sec. \ref{class of holomorphic spaces} our coherent states in $ \mathcal O $ as a suitable power series of the inner product $\sum_{\ell=1}^n x_\ell \overline z_\ell/ \hbar $ regarding $ (x_1,\ldots,x_n) \in \Sn $ and $ (z_1,\ldots,z_n) \in \mathbb C^n $, where $ \hbar $ denoting the Planck's constant. In addition, to ensure b) we define a new Hilbert space $ L^2(\mathbb C^n, \dm) $, whose measure $ \dm $ is obtained such that the inner product of these coherent states is the reproducing kernel from range of mapping $ f \to \hat f $ (see Sec. \ref{section construction measure} for details of how this measure was obtained and  Eq. (\ref{measure on U}) for definition of $\dm$).

In order to prove that the coherent states and the space $L^2(\mathbb C^n,\dm)$ constructed in Sec. \ref{class of holomorphic spaces} satisfy the conditions a) and b), in Sec. \ref{space En} we prove that the mapping $ f \to \hat f $, denoted by $\U$, is an isometry from $\mathcal O$ to $\E \subset L^2(\mathbb C^n, \dm)$. In addition, in Sec. \ref{section coherent states} we prove that the unitary transformation $\U$ applied to our coherent states gives the reproducing kernel of the space $\E$, which allows us to prove that the family of coherent states form a complete system for $\mathcal O$.

From this and according to Berezin's theory, in Sec. \ref{section Berezin symbolic calculus} we describe the rules for symbolic calculus on $\mathbf B(\Sn)$, further more we obtain asymptotic expansions of Berezin's symbol of Toeplitz operator.

Finally,  in Sec. \ref{section star product} we define a star product on the algebra out of Berezin's symbol for bounded operators with domain in $\mathcal O$ and will prove that this noncommutative star-product satisfies the usual requirement on the semiclassical limit.

It should be noted, since $ H = \mathcal O $ and $ L^2 (M, \mathrm d \mu) = L^2 (\mathbb C^n, \dm) $, in our construction there is no inclusion of $ H $ into $ L^2 (M, \mathrm d \mu) $, further more, the complete family $\{ e_\alpha\} $ is not obtained by the reproducing kernel. This situation is thus slightly different from Berezin's one.


In the present paper we  will be used throughout the text the following  basic notation.
For every $\vecz, \vecw \in \mathbb C^n$, $\vecz=(z_1,\ldots,z_n)$, $\vecw=(w_1,\ldots,w_n)$, let
\begin{displaymath}
\vecz\cdot\vecw=\sum_{\ell=1}^n z_\ell\,\overline w_\ell\;, \hspace{0.8cm} |\vecz|=\sqrt{\vecz \cdot \vecz}\;.
\end{displaymath}
Furthermore, for every multi-index $\bk=(k_1,\ldots,k_n)\in \mathbb Z_+^n$ of length $n$ where $\mathbb Z_+$ is the set of non negative integers, let \begin{displaymath}
|\bk|=\sum_{\ell=1}^n k_\ell\;, \hspace{0.8cm} \bk!=\prod_{\ell=1}^n k_\ell! \;, \hspace{0.8cm} \vecz^\bk=\prod_{\ell=1}^n z_\ell^{k_\ell}\;.
\end{displaymath}
Given $\omega\in \mathbb C$, let us denote its real and imaginary parts by $\Re(\omega)$ and $\Im(\omega)$ respectively.



\section{A class of holomorphic spaces} \label{class of holomorphic spaces}
In this section, we will give a description of how to define the special Hilbert space $L^2(\mathbb C^n,\dm)$. 

\subsection{Construction of measure $\dm$.}\label{section construction measure}

Based on the expression of coherent states obtained for spaces $L^2(S^m)$, with $m=2,3,5$ and $S^m=\{\vecx \in \mathbb R^{m+1}\;|\; x_1^2+\ldots+x_{m+1}^2=1\}$ (See \cite{D-V09}  for more details). We consider the coherent states on $L^2(\Sn)$ as the functions
\begin{equation}\label{first definition coherent states}
\Phi_{\vecw}(\vecx)=\sum_{\ell=0}^\infty \frac{c_\ell}{\ell!} \left(\frac{\vecx\cdot \vecw}{\hbar}\right)^\ell\;,\hspace{0.5cm} \vecx\in \Sn\;, \vecw\in \mathbb C^n\;,
\end{equation}
where the constants $c_\ell$ are defined below, which will be obtained considering that $\langle \Phi_\vecz,\Phi_\vecw\rangle_{\Sn}$ is the reproducing kernel of some Hilbert space.

From Lemma \ref{product in Sn}, we have
\begin{align*}
\langle\Phi_\vecz,\Phi_\vecw\rangle_{\Sn} & =\Gamma(n)\sum_{\ell=0}^\infty c_\ell^2 \frac{\displaystyle\left(\frac{\vecz\cdot\vecw}{\hbar^2}\right)^\ell}{\ell!\;\Gamma(\ell+n)}\;,
\end{align*}
where $\Gamma$ denote the Gamma function. If the constants $c_\ell$ take the following values
\begin{displaymath}
c_\ell^2=\frac{C\;\Gamma(n+\ell)}{\Gamma(n+p+\ell)\Gamma(n)}\;,\hspace{0.5cm} \mbox{with }p \in \mathbb R,
\end{displaymath}
and $C$ a constant, then we have
\begin{equation}\label{help 1}
\langle\Phi_\vecz,\Phi_\vecw\rangle_{\Sn}= C\left(\frac{\vecz\cdot\vecw}{\hbar^2}\right)^{\frac{1}{2}(1-p-n)} \mathrm I_{n+p-1}\left(\frac{2\sqrt{\vecz\cdot\vecw}}{\hbar}\right)
\end{equation}
with $\mathrm I_k$ denoting the modified Bessel function of the first kind of order $k$ (see Secs. 8.4 and 8.5 of Ref. \cite{G94} for definition and expressions for this special function). We are taking the branch of the square root function such that $\sqrt z=|z|^{1/2}\exp(\imath \theta/2)$, where $\theta= \mathrm{Arg}(z)$ and $-\pi < \theta <  \pi$. 

The Aronszajn-Moore theorem \cite{A50} states that,  every positive definite (Hermitian) function $K$ on $\mathbb C^n \times \mathbb C^n$ determines a unique Hilbert function space $H$ for which $K$ is the reproducing kernel. 

In order to construct the Hilbert space $H$ for which the right side of  Eq. (\ref{help 1}) is its reproducing kernel, let $\phi(z)=C(z/\hbar)^{-\nu/2}\mathrm I_\nu(2\sqrt z/\hbar)=\sum_{\ell=0}^\infty b_\ell z^\ell$. Define the space $H_\phi$  as the set of  all holomorphic functions in $\mathbb C^n$ equipped with inner product
\begin{equation}\label{help inner product}
(f,g)_{H_\phi} = \sum_{\ell=0}^\infty \frac{1}{b_\ell}\sum_{|\bk|=\ell} \frac{\bk!}{\ell!}f_\bk \overline{g_\bk}\;,
\end{equation}
where $f_\bk$, $g_\bk$ are Taylor's coefficients of $f$ and $g$, respectively.

Note that the function $\phi(\vecz\cdot\vecw)$ belongs to $H_\phi$ as a function of $\vecz\in \mathbb C^n$ for every fixed $\vecw\in \mathbb C^n$, and for any $f\in H_\phi$
\begin{equation*}
(f,\phi((\cdot)\cdot \vecw))_{H_\phi}= \sum_{\ell=0}^\infty \frac{1}{b_\ell}\sum_{|\bk|=\ell} \frac{\bk!}{\ell!}f_\bk \overline{\frac{b_\ell \ell!}{\bk!} \overline{\vecw^\bk}}= \sum_{\ell=0}^\infty \frac{1}{b_\ell}\sum_{|\bk|=\ell} \frac{\bk!}{\ell!}f_\bk \vecw^\bk=f(\vecw)\;.
\end{equation*}
Thus, $\phi(\vecz\cdot \vecw)$ is the reproducing kernel of $H_\phi$. From  (\ref{help inner product}) we see that
\begin{equation}\label{measure 1}
(\vecz^\bk,\vecz^\bk)_{H_\phi} = \frac{1}{b_{|\bk|}} \frac{\bk!}{|\bk|!}\;.
\end{equation}
We now assume that the inner product (\ref{help inner product}) can be expressed by
\begin{equation}\label{measure 2}
(f,g)_{H_\phi}=\int_{\mathbb C^n} f(\vecz)\overline{g(\vecz)} \omega(|\vecz|) \dz \dzb\;.
\end{equation}
Substituting the value of $b_{|\bk|}$ in Eq. (\ref{measure 1}), using Eq.  (\ref{measure 2}), polar coordinates and Lemma \ref{product in Sn} we obtain
\begin{align*}
\frac{1}{C}\hbar^{2|\bk|} \bk! \Gamma(\nu+|\bk|+1) & = (\vecz^\bk,\vecz^\bk)_{H_\phi} \\
& = \frac{2\pi^n \bk!}{(n-1+|\bk|!)} \int_0^\infty r^{2n-1+2|\bk|} \omega(r) \mathrm dr\;.
\end{align*}
So, we need find $\omega(r)$ that satisfies the equation
\begin{equation} \label{measure 3}
\int_0^\infty r^{2n-1+2|\bk|} \omega(r) \mathrm dr=\frac{\hbar^{2|\bk|}}{2C\pi^n} \Gamma(\nu+|\bk|+1) \Gamma(n+|\bk|)\;.
\end{equation}
Note that expression (\ref{measure 3}) becomes essentially the Mellin transform (see Refs. \cite{G94}, \cite{P-B-M90}). From this simple observation and the formula 6.561-16  of Ref \cite{G94} we immediately obtain 
\begin{equation}\label{measure}
\omega(r)=\frac{2}{C(\pi\hbar^2)^n}\left( \frac{r}{\hbar}\right)^{\nu-n+1} \mathrm K_{\nu-n+1}\left(\frac{2}{\hbar}r\right)\;,\hspace{0.5cm} \nu>-1\;,
\end{equation}
with $\mathrm K_p$ denoting the MacDonal-Bessel functions of order $p$ (see Secs. 8.4 and 8.5 of Ref. \cite{G94} for definitions and expressions for this special functions). Taking the constant $C$ such that $(1,1)_{H_\phi}=1$ we obtain 
\begin{equation}\label{constant C}
C=\Gamma(\nu +1)\;.
\end{equation}

\subsection{The space $\E$}
In the previous section we constructed a Hilbert space whose inner product can be expressed in the form (\ref{measure 2})  and its reproducing kernel is the right side of Eq. (\ref{help 1}); in this section we proves rigorously all those results. From Eqs. (\ref{measure}) and (\ref{constant C}) let us consider the following measure $\dm$ on $\mathbb C^n$
\begin{equation}\label{measure on U}
 \dm(\vecz)=\frac{1}{\Gamma(n+p)}\frac{2}{(\pi\hbar^2)^n}\left(\frac{|\vecz|}{\hbar}\right)^p \mathrm K_p\left(2\frac{|\vecz|}{\hbar}\right) \dz \dzb \;, \hspace{0.5cm} p>-n
\end{equation}
with $\vecz=(z_1,\ldots,z_n)\in \mathbb C^n$ and $\dz\dzb$ denoting Lebesgue measure on $\mathrm C^n$.

Note that the measure $\dm$ is invariant under the rotation group
$\mathrm{SO}(n)$ (the group of $n \times n$ orthogonal matrices with unit determinant and real entries). The action of $\mathrm{SO}(n)$ on $\mathbb C^n$
that we are considering is the natural extension of the usual action of $\mathrm{SO}(n)$ on $\mathbb R^n$, this is: for $R \in \mathrm {SO}(n)$ defining $\mathrm T_R: \mathbb C^n \to \mathbb C^n$ by $\mathrm T_R(\vecz)=R \;\Re (\vecz)+ \imath R\; \Im(\vecz)$.

Let us consider the Hilbert space $L^2(\mathbb C^n,\dm)$ of square integrable functions on $\mathbb C^n$ with respect to the measure $\dm$ and endowed with the inner product
\begin{equation}
 (f,g)_p = \int\limits_{\vecz \in \mathbb C^n} f(\vecz)\overline{g(\vecz)} \dm(\vecz)\;,\hspace{0.5cm} f,g \in L^2(\mathbb C^n , \dm)\;.
\end{equation}

Let us denote by $||f||_p= \sqrt{(f,f)_p}$ the corresponding norm of $f \in L^2(\mathbb C^n , \dm)$.

\begin{definition}
 For $p>-n$, the space of entire functions $f$ defined on $\mathbb C^n$ such that $||f||_p$ is finite is denoted by $\E$.
\end{definition}

\begin{remark}
For $n=1$, the spaces $\mathcal E_{1,p}$ were used by Karp Dmitrii to derive an analytic continuation formula for functions on $\mathbb R^+$ (see \cite{K03}).
\end{remark}

\begin{lemma}\label{product in Enp}
 Let 
 \begin{equation*}
  \mathrm L:=\int\limits_{\mathbb C^n} \vecz^{\bk} \overline\vecz^\bm (\vecz \cdot \vecw)^s (\vecu \cdot \vecz)^\ell \dm(\vecz)\;,
 \end{equation*}
 with $\ell,s \in \mathbb Z_+$, $\bk,\bm \in \mathbb Z_+^n$, and $\vecw,\vecu \in \mathbb C^n\backslash\{0\}$. Then $\mathrm L=0$ if $|\bk|+s\ne|\bm|+\ell$, and if $|\bk|+s=|\bm|+\ell$, then 
 \begin{equation}
  \mathrm L= \hbar^{2(|\bk|+s)}(n)_{|\bk|+s}(n+p)_{|\bk|+s} \int_{\Sn} \vecx^\bk \overline \vecx^\bm (\vecx \cdot \vecw)^s (\vecu \cdot \vecx)^\ell \ds(\vecx)\;.
 \end{equation} 
\end{lemma}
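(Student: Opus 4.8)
The plan is to use the two symmetries of the weight $\dm$ — radiality, and invariance under the scalar rotations $\vecz\mapsto e^{\imath\theta}\vecz$ — to reduce $\mathrm L$ to (a constant times) a one-dimensional radial integral multiplied by the spherical integral that already appears on the right-hand side of the statement. First I would pass to ``polar coordinates'' on $\mathbb C^n\setminus\{0\}$, writing $\vecz=r\vecx$ with $r=|\vecz|>0$ and $\vecx=\vecz/|\vecz|\in\Sn$, exactly as in the polar-coordinate passage used in Section~\ref{section construction measure} (cf. Lemma~\ref{product in Sn}). Since $r\in\mathbb R$, the integrand factors as
\[
\vecz^\bk\,\overline\vecz^\bm\,(\vecz\cdot\vecw)^s\,(\vecu\cdot\vecz)^\ell
= r^{\,|\bk|+|\bm|+s+\ell}\,\vecx^\bk\,\overline\vecx^\bm\,(\vecx\cdot\vecw)^s\,(\vecu\cdot\vecx)^\ell,
\]
and $\dm$, being radial, decomposes as $\tfrac{2\pi^n}{\Gamma(n)}\,\omega(r)\,r^{2n-1}\,\mathrm dr\,\ds(\vecx)$ with $\omega$ the weight of Eq.~(\ref{measure}). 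Absolute convergence (so that Fubini applies) is immediate: on $\Sn$ one has $|\vecx^\bk\overline\vecx^\bm(\vecx\cdot\vecw)^s(\vecu\cdot\vecx)^\ell|\le|\vecw|^s|\vecu|^\ell$, and $r^{|\bk|+|\bm|+s+\ell+2n-1}\omega(r)$ is integrable on $(0,\infty)$ because $\mathrm K_p$ decays exponentially at infinity while $p>-n$ controls the behaviour at the origin. Hence $\mathrm L$ is the product of $\tfrac{2\pi^n}{\Gamma(n)}\int_0^\infty r^{|\bk|+|\bm|+s+\ell+2n-1}\omega(r)\,\mathrm dr$ with $\int_{\Sn}\vecx^\bk\overline\vecx^\bm(\vecx\cdot\vecw)^s(\vecu\cdot\vecx)^\ell\,\ds(\vecx)$.

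For the vanishing, I would use that the change of variables $\vecz\mapsto e^{\imath\theta}\vecz$ preserves $\dm$ — it fixes $|\vecz|$ and Lebesgue measure on $\mathbb C^n\cong\mathbb R^{2n}$ — while it multiplies the integrand by $e^{\imath(|\bk|+s-|\bm|-\ell)\theta}$; therefore $\mathrm L=e^{\imath(|\bk|+s-|\bm|-\ell)\theta}\mathrm L$ for every $\theta\in\mathbb R$, which forces $\mathrm L=0$ unless $|\bk|+s=|\bm|+\ell$. (Equivalently, this argument may be run directly inside the spherical integral.)

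Finally, when $m:=|\bk|+s=|\bm|+\ell$ the radial exponent is $|\bk|+|\bm|+s+\ell=2m$, and $\int_0^\infty r^{2m+2n-1}\omega(r)\,\mathrm dr$ is precisely the integral already evaluated in Section~\ref{section construction measure}: it is Eq.~(\ref{measure 3}) with $|\bk|$ replaced by $m$ and $C=\Gamma(n+p)$, obtained from the Mellin transform of the MacDonald function (formula 6.561-16 of \cite{G94}), and it equals $\tfrac{\hbar^{2m}}{2\Gamma(n+p)\pi^n}\,\Gamma(n+m)\,\Gamma(n+p+m)$. Multiplying by the Jacobian constant $\tfrac{2\pi^n}{\Gamma(n)}$ and rewriting $\Gamma(n+m)/\Gamma(n)=(n)_m$ and $\Gamma(n+p+m)/\Gamma(n+p)=(n+p)_m$ yields $\mathrm L=\hbar^{2m}(n)_m(n+p)_m\int_{\Sn}\vecx^\bk\overline\vecx^\bm(\vecx\cdot\vecw)^s(\vecu\cdot\vecx)^\ell\,\ds(\vecx)$, which is the claim. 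I expect the only real nuisance to be bookkeeping the normalization constants ($2\pi^n/\Gamma(n)$ from the polar decomposition, the powers of $\hbar$ and $2$ in the Mellin formula); conceptually the scalar-rotation symmetry together with the radial integral already done in Section~\ref{section construction measure} is all that is needed, so I anticipate no substantial obstacle.
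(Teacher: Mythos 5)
Your proof is correct and follows essentially the same route as the paper: pass to polar coordinates so that $\mathrm L$ factors into a radial integral against the weight $\left(r/\hbar\right)^p\mathrm K_p(2r/\hbar)$ times the spherical integral, evaluate the radial part by the Mellin transform formula 6.561-16 of \cite{G94}, and match the constants to obtain $\hbar^{2m}(n)_m(n+p)_m$. The only (harmless) variation is that you derive the vanishing for $|\bk|+s\ne|\bm|+\ell$ from invariance under $\vecz\mapsto e^{\imath\theta}\vecz$, whereas the paper reads it off from Lemma \ref{product in Sn} applied to the spherical factor; both are valid and your bookkeeping of the normalization constants is accurate.
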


\begin{proof} From Eq. (\ref{measure on U}) and expressing L in polar coordinates 
\begin{equation*}
 \mathrm L= \frac{|\Sn|2\hbar^{-(p+2n)}}{\pi^n\Gamma(n+p)}\int\limits_0^\infty r^{2n-1+|\bm|+|\bk|+\ell+s+p} \mathrm K_p(2\frac{r}{\hbar}) \mathrm dr \int\limits_{\Sn} \vecx^\bk \overline \vecx^\bm (\vecx \cdot \vecw)^s (\vecu \cdot \vecx)^\ell \ds(\vecx)\;,
\end{equation*}
where $|\Sn|=\mathrm{vol}(\Sn)$. From the formula 6.561-16 of Ref \cite{G94} and Lemma \ref{product in Sn} we conclude the proof of Lemma \ref{product in Enp} 
\end{proof}

\begin{prop} Let  $p>-n$ 
 \begin{enumerate}
  \item[a) ] The space of analytic polynomials defined on $\mathbb C^n$ is dense on $\E$.
  
  \item[b) ] For $\ell \in \mathbb Z_+$, let us denote by $W_\ell$ the space of homogeneous polynomials of degree $\ell$. Then 
  \begin{equation}\label{direct sum E}
   \E = \bigoplus_{\ell=0}^\infty W_\ell\;. 
  \end{equation}
  Further, the set $\{\Phi_{\bk,p}^\hbar\;|\; \bk \in \mathbb Z_+^n\}$ is orthonormal basis of $\E$, where $\Phi_{\bk,p}^\hbar: \mathbb C^n \to \mathbb C$ is defining by
  \begin{equation}\label{Phi}
   \Phi_{\bk,p}^\hbar(\vecz)=\frac{1}{\hbar^{|\bk|}} \sqrt{\frac{1}{(n+p)_{|\bk|}\bk!}} \;\vecz^{\bk}\;,
  \end{equation}
where, for $b> 0$, the Pochhammer symbol is given by
 \begin{equation*}
  (b)_\ell=\frac{\Gamma(b+\ell)}{\Gamma(b)}\;.
 \end{equation*}

  \item[c) ] The space $\E$ enjoys the property of having a reproducing kernel $\T=\T(\vecz,\vecw)$. Namely, for all $f \in \E$ we have
  \begin{equation}\label{property kernel E}
   f(\vecz)=(f,\T(\cdot,\vecz))=\int\limits_{\vecw \in \mathbb C^n} f(\vecw) \T(\vecz,\vecw) \dm(\vecw)\;,\hspace{0.5cm} \forall\vecz \in \mathbb C^n
  \end{equation}
  where the reproducing kernel is given by
  \begin{equation}\label{kernel}
   \T(\vecz,\vecw)= \Gamma(n+p) \left(\frac{\vecz \cdot \vecw}{\hbar^2}\right)^{\frac{1}{2}(-p-n+1)} \mathrm I_{n+p-1} \left(\frac{2 \sqrt{\vecz \cdot \vecw}}{\hbar}\right).
  \end{equation}
Even more 
  \begin{equation}\label{kernel series}
   \T(\vecz,\vecw)= \sum_{\ell=0}^\infty \mathbf T_{n,p}^\ell(\vecz,\vecw)\;,
  \end{equation}
  where $\mathbf T_{n,p}^\ell$ are the reproducing kernel of the subspace $W_\ell$
  \begin{displaymath}
   \mathbf T_{n,p}^\ell(\vecz,\vecw)=\frac{1}{\ell! (n+p)_\ell}\left(\frac{\vecz \cdot\vecw}{\hbar^2}\right)^\ell\;.
  \end{displaymath}

    
 \end{enumerate}
\end{prop}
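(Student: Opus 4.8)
The plan is to establish (b) first, since the orthonormal basis is the engine that drives (a) and (c), and then to deduce (a) and (c) from it.

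\emph{Orthonormality and the building blocks.} First I would apply Lemma \ref{product in Enp} with $s=\ell=0$ to evaluate the pairing of two monomials, $(\vecz^\bk,\vecz^\bm)_p=\int_{\mathbb C^n}\vecz^\bk\overline\vecz^\bm\,\dm(\vecz)$: it vanishes unless $|\bk|=|\bm|$, and in that case it equals $\hbar^{2|\bk|}(n)_{|\bk|}(n+p)_{|\bk|}\int_{\Sn}\vecx^\bk\overline\vecx^\bm\,\ds(\vecx)$. Feeding in the standard monomial integral over $\Sn$ from Lemma \ref{product in Sn}, namely $\int_{\Sn}\vecx^\bk\overline\vecx^\bm\,\ds(\vecx)=\delta_{\bk\bm}\,\bk!/(n)_{|\bk|}$, gives $(\vecz^\bk,\vecz^\bm)_p=\delta_{\bk\bm}\,\hbar^{2|\bk|}(n+p)_{|\bk|}\bk!$. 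In particular every monomial lies in $\E$, so $W_\ell\subset\E$, and the family $\{\Phi_{\bk,p}^\hbar\}$ defined by (\ref{Phi}) is orthonormal, with $\{\Phi_{\bk,p}^\hbar:|\bk|=\ell\}$ an orthonormal basis of $W_\ell$ and distinct $W_\ell$ mutually orthogonal.

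\emph{The key identity and completeness.} The crux is the following: for any entire $f\in\E$ with Taylor expansion $f=\sum_\bk f_\bk\vecz^\bk$ one has $(f,\vecz^\bk)_p=f_\bk\,\|\vecz^\bk\|_p^2$. To prove this I would use the torus action $\vecz\mapsto(e^{\imath\theta_1}z_1,\dots,e^{\imath\theta_n}z_n)$, under which $\dm$ is invariant (it depends on $\vecz$ only through $|\vecz|$, and Lebesgue measure is invariant). Pointwise on the compact torus the series for $f$ converges uniformly, so $\frac{1}{(2\pi)^n}\int_{[0,2\pi]^n}e^{-\imath\bk\cdot\vtheta}f(e^{\imath\vtheta}\vecz)\,\mathrm d\vtheta=f_\bk\vecz^\bk$; multiplying by $\overline{\vecz^\bk}$, integrating in $\dm$, and interchanging the two integrals (legitimate since $\vecz^\bk\in L^2(\dm)$ and the torus is compact) together with the change of variables $\vecw=e^{\imath\vtheta}\vecz$ yields the claim. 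Two consequences follow. First, $|f_\bk|\le\|f\|_p/\|\vecz^\bk\|_p$ by Cauchy--Schwarz, and a routine estimate on $\sum_\bk |\vecz^\bk|/\|\vecz^\bk\|_p$ (finite by the growth $(n+p)_\ell\sim\ell!\,\ell^{n+p-1}/\Gamma(n+p)$) shows $|f(\vecz)|\le C(\vecz)\,\|f\|_p$ locally uniformly; hence point evaluations are continuous and $\E$ is complete, i.e. a reproducing kernel Hilbert space. Second, if $h\in\E$ is orthogonal to every $\Phi_{\bk,p}^\hbar$ then all its Taylor coefficients vanish, so $h\equiv0$; therefore the analytic polynomials---which are exactly the finite linear combinations of the $\Phi_{\bk,p}^\hbar$---are dense in $\E$. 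This proves (a), and combined with the orthonormality above it proves (b), including $\E=\bigoplus_\ell W_\ell$ as in (\ref{direct sum E}).

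\emph{The reproducing kernel.} For (c), since $\E$ is now known to be a reproducing kernel Hilbert space with orthonormal basis $\{\Phi_{\bk,p}^\hbar\}$, its kernel is $\T(\vecz,\vecw)=\sum_\bk\Phi_{\bk,p}^\hbar(\vecz)\,\overline{\Phi_{\bk,p}^\hbar(\vecw)}$. Grouping the sum by $|\bk|=\ell$ and applying the multinomial identity $\sum_{|\bk|=\ell}\vecz^\bk\overline\vecw^{\,\bk}/\bk!=(\vecz\cdot\vecw)^\ell/\ell!$ gives $\T=\sum_\ell\mathbf T_{n,p}^\ell$ with $\mathbf T_{n,p}^\ell(\vecz,\vecw)=\frac{1}{\ell!\,(n+p)_\ell}\big(\tfrac{\vecz\cdot\vecw}{\hbar^2}\big)^\ell$, and $\mathbf T_{n,p}^\ell$ is manifestly the reproducing kernel of $W_\ell$ because $\{\Phi_{\bk,p}^\hbar:|\bk|=\ell\}$ is an orthonormal basis of $W_\ell$; this is (\ref{kernel series}). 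Finally, writing $t=\vecz\cdot\vecw/\hbar^2$ and using the power series of the modified Bessel function, $\sum_{\ell\ge0}\frac{t^\ell}{\ell!\,(n+p)_\ell}=\Gamma(n+p)\sum_{\ell\ge0}\frac{t^\ell}{\ell!\,\Gamma(n+p+\ell)}=\Gamma(n+p)\,t^{(1-n-p)/2}\,\mathrm I_{n+p-1}\big(2\sqrt t\big)$, which reproduces (\ref{kernel}); the combination $t^{(1-n-p)/2}\mathrm I_{n+p-1}(2\sqrt t)$ is entire in $t$, so no branch choice is actually needed, and the series converges for all $\vecz,\vecw$ by the same growth estimate on $(n+p)_\ell$ used above.

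The main obstacle is the identity $(f,\vecz^\bk)_p=f_\bk\,\|\vecz^\bk\|_p^2$: everything mechanical (the monomial pairings, the kernel summation, the Bessel identification) reduces to Lemmas \ref{product in Sn} and \ref{product in Enp} and elementary series manipulations, whereas this single fact simultaneously yields continuity of point evaluations---hence that $\E$ is a genuine Hilbert function space---and completeness of the monomial system. The delicate points there are the interchange of the torus integral with the integral against $\dm$ and the term-by-term integration of the Taylor series, both of which are controlled by compactness of the torus together with $\vecz^\bk\in L^2(\dm)$ and the $\mathrm{SO}(n)$- (indeed torus-) invariance of $\dm$.
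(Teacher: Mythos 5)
Your proof is correct, and it takes a noticeably more self-contained route than the paper's. The paper disposes of (a) by invoking the Stone--Weierstrass theorem, makes (b) depend on (a), and reduces (c) to checking that $\mathbf T_{n,p}^\ell$ reproduces on $W_\ell$ via Lemma \ref{product in Enp}. You instead reverse the logical order: you first get the monomial orthogonality relations from Lemmas \ref{product in Enp} and \ref{product in Sn}, then prove the identity $(f,\vecz^\bk)_p=f_\bk\|\vecz^\bk\|_p^2$ by torus averaging (using that $\dm$ depends on $\vecz$ only through $|\vecz|$), and from that single identity extract both the density of polynomials and the continuity of point evaluations, hence completeness of $\E$ and the kernel as the orthonormal-basis sum. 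This buys real rigor: Stone--Weierstrass concerns uniform approximation of continuous functions on compact sets by self-adjoint subalgebras, and it is not clear how it yields $L^2(\dm)$-density of \emph{analytic} polynomials in a space of entire functions over the non-compact $\mathbb C^n$; your Taylor-coefficient argument is the standard and correct substitute, and it also supplies the facts (completeness of $\E$, boundedness of evaluation functionals) that the paper uses implicitly when it writes down the reproducing kernel. Your justification of the two delicate interchanges (Fubini over the compact torus against $\dm$, controlled by Cauchy--Schwarz and the invariance of $\dm$, and termwise integration of the locally uniformly convergent Taylor series) is adequate, and the final Bessel-series identification of $\T$ matches Eq. (\ref{kernel}). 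The only cosmetic caveat is that the remark about no branch choice being needed is true of the combination $t^{(1-n-p)/2}\mathrm I_{n+p-1}(2\sqrt t)$ as an entire function of $t$, which is indeed how the kernel should be read.
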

\begin{proof} a) Is a consequence of the Stone-Weiertrass theorem.

b) Note that the space $W_\ell$ is contained in $L^2(\mathbb C^n, \dm)$. Then by a), we only need prove that the spaces $W_\ell$ and  $W_{\ell'}$ are orthogonal for $\ell \ne \ell'$, 
which is a consequence of Lemma \ref{product in Enp}.
The second part of this point too is a direct consequence of Lemma \ref{product in Enp}, since
\begin{equation}
 \left\{\frac{1}{\hbar^{\ell}} \sqrt{\frac{1}{(n+p)_{\ell}\bk!}} \;\vecz^{\bk}\;|\; \bk \in \mathbb Z_+^n\;,\;|\bk|=\ell\right\}
\end{equation}
is orthogonal basis of $W_\ell$.

c) By Eq. (\ref{direct sum E}), it suffices to show that the function $\mathbf T_{n,p}^\ell(\vecz,\vecw)$ is the reproducing kernel in $W_\ell$. That will appear as a simple consequence of Lemma \ref{product in Enp}.  
\end{proof}

As a consequence of the existence of the reproducing kernel and the Cauchy-Schwartz inequality, we obtain the following estimate for any $f \in \E$
\begin{displaymath}
 |f(\vecz)|\le ||f||_p\;||\T(\cdot,\vecz)||\;,\hspace{0.5cm} \forall \vecz \in \mathbb C^n\;.
\end{displaymath}

\section{Unitary transform} \label{space En}

In this section we introduce a unitary transform $\U$ from $\mathcal O$ onto Hilbert space $\E$, with $n \ge 2$ and $p > -n$. In order to define this transformation, we first recall some results about the space $\mathcal O$. 

For $\ell \in \mathbb Z_+$, let us define the space $V_\ell$ of homogeneous polynomials of degree $\ell$ as the vector space of restrictions to the $n$-sphere of homogeneous polynomials of degree $\ell$ defined initially on the ambient space $\mathbb C^n$. We will use the fact that $\mathcal O$ is equal to the direct sum of the space $V_\ell$ (see \cite{R08})
\begin{equation}
 \mathcal O= \bigoplus_{\ell=0}^\infty V_\ell\;.
\end{equation} 
Further, the set $\left\{\phi_{\bk}\;|\; \bk\in \mathbb Z_+^n\right\}$ is orthogonal basis of $\mathcal O$, where $\phi_{\bk}:\Sn \to \mathbb C$ is defining by
\begin{equation}\label{phi}
 \phi_{\bk}(\vecx)= \sqrt{\frac{(n)_{|\bk|}}{\bk!}}
 \;\vecx^\bk\;.
\end{equation}

For every $\ell \in \mathbb Z_+$, exist a natural transformation from $V_\ell$ to $W_\ell$, this is the linear extension of the assignment $\phi_{\bk} \to \Phi_{\bk,p}^\hbar$, where $\phi_{\bk}$, $\Phi_{\bk,p}^\hbar$ are defined in Eqs. (\ref{phi}) and (\ref{Phi}) respectively, and $\bk \in \mathbb Z_+^n$ with $|\bk|=\ell$. Let us define this unitary transform by $\mathbf U_{n,p}^\ell$.

We now consider the linear extension $\U:\mathcal O \to \E$ of the operators $\mathbf U_{n,p}^\ell$ defined as follows:

Given $\Psi \in \mathcal O$ written as $\Psi=\sum_{\ell=0}^\infty \Psi_\ell$ with $\Psi_\ell \in V_\ell$, we define the partial sums $S_k(\Psi)=\sum_{\ell=0}^k \mathbf U_{n,p}^\ell\Psi_\ell$ and then
\begin{equation}\label{definition U limits}
 \U \Psi := \lim_{k \to \infty} S_k(\Psi)\;.
\end{equation}

The operator $\U$ is well defined and unitary due to the unitarity of the operators $\mathbf U_{n,p}^\ell$ and the fact that every element $f$ in the space $\E$ can be written in a unique way as $f=\lim_{k\to \infty} \sum_{\ell=0}^k f_\ell$ with $f_\ell \in W_\ell$. (See Eq. (\ref{direct sum E})).


\begin{theorem}\label{unitary}
 The operator $\U$ mapping $\mathcal O$ isometrically onto the space $\E$. Even more
 \begin{equation}\label{expression U}
  \U\Psi(\vecz)= 
  \int_{\Sn} 
  \Psi(\vecx) \sum_{\ell=0}^\infty \frac{c_{\ell,p}}{\ell!}\left(\frac{\vecz \cdot \vecx}{\hbar}\right)^\ell\
  \ds(\vecx)\;,\hspace{0.5cm} \forall \Psi \in \mathcal O\;,\vecz \in\mathbb C^n\;,
 \end{equation}
 where
 \begin{equation}\label{constants coherent states}
  (c_{\ell,p})^2= \frac{(n)_\ell}{(n+p)_\ell}\;.
 \end{equation}
 
\end{theorem}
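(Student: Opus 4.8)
The plan is to verify the two assertions separately: first that $\U$ is an isometry onto $\E$ (which is already essentially built into the construction), and then that it admits the integral representation \eqref{expression U}. For the first part, the operators $\mathbf U_{n,p}^\ell:V_\ell\to W_\ell$ are unitary by definition (they send the orthogonal basis $\{\phi_\bk\}_{|\bk|=\ell}$ of $V_\ell$ to the orthonormal basis $\{\Phi_{\bk,p}^\hbar\}_{|\bk|=\ell}$ of $W_\ell$, after one checks the normalization of $\phi_\bk$ in $L^2(\Sn)$). Since $\mathcal O=\bigoplus_\ell V_\ell$ and $\E=\bigoplus_\ell W_\ell$ are orthogonal direct sum decompositions, the partial sums $S_k(\Psi)$ form a Cauchy sequence whenever $\Psi=\sum_\ell\Psi_\ell\in\mathcal O$, because $\|S_k(\Psi)-S_j(\Psi)\|_p^2=\sum_{\ell=j+1}^k\|\Psi_\ell\|_{\Sn}^2\to 0$; this gives that $\U$ is well defined, and Parseval on both sides gives $\|\U\Psi\|_p=\|\Psi\|_{\Sn}$. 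Surjectivity follows because any $f=\lim_k\sum_{\ell=0}^k f_\ell\in\E$ is hit by $\Psi=\sum_\ell(\mathbf U_{n,p}^\ell)^{-1}f_\ell$, which lies in $\mathcal O$ by the same norm computation in reverse.

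For the integral formula, I would first establish it on a single homogeneous component and then pass to the limit. Fix $\bk$ with $|\bk|=\ell$ and compute the right-hand side of \eqref{expression U} applied to $\phi_\bk$. Expanding the kernel $\sum_{m\ge 0}\frac{c_{m,p}}{m!}\bigl(\frac{\vecz\cdot\vecx}{\hbar}\bigr)^m$ and using $\vecz\cdot\vecx=\sum z_j\overline x_j$, only the term $m=\ell$ survives the integration over $\Sn$ against $\vecx^\bk$ by the orthogonality of $V_m$ and $\overline{V_\ell}$ in $L^2(\Sn)$ (equivalently, Lemma \ref{product in Sn} forces $|\bk|=m$). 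The remaining integral $\int_{\Sn}\vecx^\bk(\vecz\cdot\vecx)^\ell\,\ds(\vecx)$ is evaluated by Lemma \ref{product in Sn}, and matching the resulting multinomial against $\Phi_{\bk,p}^\hbar(\vecz)=\hbar^{-\ell}\sqrt{1/((n+p)_\ell\bk!)}\,\vecz^\bk$ pins down exactly $(c_{\ell,p})^2=(n)_\ell/(n+p)_\ell$; one sees here the role of the normalizing constant $\sqrt{(n)_{|\bk|}/\bk!}$ from \eqref{phi} together with the constant $c_\ell^2=C\Gamma(n+\ell)/(\Gamma(n+p+\ell)\Gamma(n))$ chosen in Section \ref{section construction measure}, which is consistent since $C=\Gamma(\nu+1)$ with $\nu=n+p-1$. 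Thus the integral operator agrees with $\mathbf U_{n,p}^\ell$ on each $V_\ell$, hence with $\U$ on the dense subspace of polynomials.

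Finally I would justify interchanging the series and the integral, and the passage to the limit $k\to\infty$. For $\Psi\in\mathcal O$ the convergence $S_k(\Psi)\to\U\Psi$ is in $\E$-norm, and since $\E$ has reproducing kernel $\T$ (part c) of the Proposition), norm convergence implies pointwise convergence with the bound $|f(\vecz)|\le\|f\|_p\,\|\T(\cdot,\vecz)\|$; so it is enough to check that the right-hand side of \eqref{expression U} depends continuously on $\Psi\in L^2(\Sn)$ for each fixed $\vecz$. This reduces to showing the kernel $\vecx\mapsto\sum_\ell\frac{c_{\ell,p}}{\ell!}(\vecz\cdot\vecx/\hbar)^\ell$ lies in $L^2(\Sn)$ for each $\vecz$, which is \eqref{help 1}/\eqref{kernel} evaluated on the diagonal after applying $\U$ in one variable — indeed the $L^2(\Sn)$ inner product of two such kernels is $\langle\Phi_\vecz,\Phi_\vecw\rangle_{\Sn}$ with the present constants, finite by the entire-function bound on $\mathrm I_{n+p-1}$. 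The main obstacle is this last interchange/continuity step: one must be careful that the coherent-state kernel is genuinely square-integrable on $\Sn$ (not merely formally a power series) and that termwise integration is legitimate, but the growth estimate on the modified Bessel function makes this routine once set up. Dominated convergence on $\Sn$ (a finite measure space), using $|\vecz\cdot\vecx|\le|\vecz|$ for $\vecx\in\Sn$, closes the argument.
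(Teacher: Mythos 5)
Your proposal is correct and follows essentially the same route as the paper: both arguments reduce to the unitarity of the graded pieces $\mathbf U_{n,p}^\ell$ together with the orthogonal decompositions $\mathcal O=\bigoplus V_\ell$, $\E=\bigoplus W_\ell$, identify the integral kernel of $\mathbf U_{n,p}^\ell$ with the $\ell$-th term of the coherent-state series (you by applying the kernel to $\phi_\bk$ and invoking Lemma \ref{product in Sn}, the paper by summing $\overline{\phi_\bk}\,\Phi_{\bk,p}^\hbar$ via the multinomial theorem --- the same computation read in opposite directions), and close with dominated convergence using the boundedness of the kernel on $\Sn$. Your extra care about the normalization of $\phi_\bk$ and the $L^2(\Sn)$-membership of the kernel only makes explicit what the paper leaves implicit.
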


\begin{proof}
Note that for $\vecz \in \mathbb C^n$ fixed and $\vecx \in \Sn$, the series 
\begin{equation}\label{coherent states}
 \K(\vecx,\vecz):=\sum_{\ell=0}^\infty \frac{c_{\ell,p}}{\ell!}\left(\frac{\vecx \cdot \vecz}{\hbar}\right)^\ell\
\end{equation}
is bounded on $\Sn$ and therefore it is a vector in $L^2(\Sn)$ as well. Then, by the Cauchy-Schwartz inequality, the right side of Eq. (\ref{expression U}) is a well defined function. 

Let $\ell \in \mathbb Z_+$, is not difficult show that
 \begin{equation}\label{expresion Ul integral}
  \mathbf U_{n,p}^\ell \Phi(\vecz)=\int_{\Sn} \sum_{|\bk|=\ell} \overline{\phi_{\bk}(\vecx)}\Phi_{\bk,p}^{\hbar}(\vecz) \Phi(\vecx) \ds(\vecx)\;,\hspace{0.5cm} \vecz \in \mathbb C^n\;,\hspace{0.5cm} \Phi\in V_\ell\;.
 \end{equation}
 
Even more, by Eqs (\ref{phi}) and (\ref{Phi})
\begin{align}
 \sum_{|\bk|=\ell} \overline{\phi_{\bk}(\vecx)} \Phi_{\bk, p}^{\hbar}(\vecz) & = \sum_{|\bk|=\ell} 
 \frac{1}{\hbar^\ell}\sqrt{\frac{(n)_\ell}{\bk!}} \sqrt{\frac{1}{(n+p)_\ell\bk!}}\; \overline \vecx^\bk \vecz^\bk\nonumber\\
 & = \frac{c_{\ell,p}}{\ell!}\left(\frac{\vecz \cdot \vecx}{\hbar}\right)^\ell\;. \label{kernel Ul}
\end{align}
Then, given $\Psi \in \mathcal O$, written as $\Psi=\sum_{\ell=0}^\infty \Psi_\ell$ with $\Psi_\ell \in V_\ell$, we obtain the Eq. (\ref{expression U}) from definition of $\U$ (see Eq. (\ref{definition U limits})), the integral expression the operator $\mathbf U_{n,p}^\ell$ (see Eqs. (\ref{expresion Ul integral}) and (\ref{kernel Ul})) and the dominated converge theorem.\end{proof}
 
We end this section with an inversion formula for the operator $\U$.
\begin{theorem}\label{inverse}
 Let $f \in \E$, then
 \begin{equation*}
  (\U)^{-1} f(\vecx)= \int_{\mathbb C^n}  \sum_{\ell=0}^\infty \frac{c_{\ell,p}}{\ell!}\left(\frac{\vecx \cdot \vecz}{\hbar}\right)^\ell f(\vecz) \dm(\vecz)\;,\hspace{0.5cm}\vecx \in \Sn\;.
 \end{equation*}
\end{theorem}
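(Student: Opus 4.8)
The plan is to show that the integral operator displayed in the statement is precisely the adjoint of $\U$, and then invoke the unitarity of $\U$ from Theorem \ref{unitary} to conclude it equals $\U^{-1}$. First I would define the candidate operator $V: \E \to \mathcal O$ by
\begin{equation*}
 Vf(\vecx)= \int_{\mathbb C^n} \Kz(\vecz,\vecx)\, f(\vecz)\, \dm(\vecz)\;, \hspace{0.5cm} \vecx \in \Sn\;,
\end{equation*}
where $\Kz(\vecz,\vecx)=\sum_{\ell=0}^\infty \frac{c_{\ell,p}}{\ell!}(\vecx\cdot\vecz/\hbar)^\ell = \overline{\K(\vecx,\vecz)}$ (using that the $c_{\ell,p}$ are real). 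One first checks that $Vf$ is a well-defined element of $L^2(\Sn)$: for fixed $\vecx\in\Sn$ the kernel $\Kz(\cdot,\vecx)$ lies in $\E$ — indeed its expansion in the orthonormal basis $\{\Phi_{\bk,p}^\hbar\}$ has square-summable coefficients by exactly the computation in Eq. (\ref{kernel Ul}) — so the integral converges by Cauchy--Schwarz, and membership of $Vf$ in $\mathcal O$ follows since each term is the restriction of a homogeneous polynomial and the partial sums converge in $L^2(\Sn)$.

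Next I would verify the adjoint relation $(\U\Psi, f)_p = \langle \Psi, Vf\rangle_{\Sn}$ for all $\Psi\in\mathcal O$, $f\in\E$. Using the integral formula (\ref{expression U}) for $\U\Psi$ and the definition of $(\cdot,\cdot)_p$, the left-hand side is
\begin{equation*}
 \int_{\mathbb C^n}\left(\int_{\Sn}\Psi(\vecx)\,\K(\vecx,\vecz)\,\ds(\vecx)\right)\overline{f(\vecz)}\,\dm(\vecz)\;,
\end{equation*}
and one wants to interchange the two integrations to recover $\int_{\Sn}\Psi(\vecx)\overline{Vf(\vecx)}\,\ds(\vecx)$. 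The interchange is justified by Fubini once one bounds $\int_{\Sn}\int_{\mathbb C^n}|\Psi(\vecx)||\K(\vecx,\vecz)||f(\vecz)|\,\dm(\vecz)\,\ds(\vecx)<\infty$; this estimate comes from $|\K(\vecx,\vecz)| \le \|\K(\vecx,\cdot)\|_p$ being a bounded function of $\vecx\in\Sn$ (the norm $\|\K(\vecx,\cdot)\|_p^2=\sum_\ell \frac{c_{\ell,p}^2}{\ell!\,(n+p)_\ell}\cdot\frac{(n+p)_\ell}{\ell!}\cdots$ is controlled uniformly on $\Sn$ since $|\vecx\cdot\vecz|\le|\vecz|$) together with $\Psi\in L^2(\Sn)$ and $f\in L^2(\mathbb C^n,\dm)$. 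Granting the interchange, we read off $\U^*=V$; since $\U$ is unitary by Theorem \ref{unitary}, $\U^{-1}=\U^*=V$, which is the claimed formula.

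The main obstacle I anticipate is the uniform integrability estimate needed to apply Fubini, i.e.\ controlling the growth of the coherent-state kernel $\K(\vecx,\vecz)$ in $\vecz$ against the decay of the measure $\dm$. The cleanest route is probably to avoid Fubini on the full series and instead argue term by term: expand $\Psi=\sum_\ell\Psi_\ell$ and $f=\sum_\ell f_\ell$ into their homogeneous components, use the orthogonality of the $W_\ell$ (resp.\ $V_\ell$) from Lemma \ref{product in Enp}, and reduce everything to the already-established finite-dimensional identity $(\mathbf U_{n,p}^\ell)^{-1}=(\mathbf U_{n,p}^\ell)^*$ on each $W_\ell$, whose integral kernel is exactly $\sum_{|\bk|=\ell}\overline{\phi_\bk(\vecx)}\Phi_{\bk,p}^\hbar(\vecz)=\frac{c_{\ell,p}}{\ell!}(\vecx\cdot\vecz/\hbar)^\ell$ by Eq.\ (\ref{kernel Ul}). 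Summing over $\ell$ and controlling the tail via the convergence in (\ref{definition U limits}) then yields the formula without any delicate two-variable integrability argument, only the orthogonal-decomposition bookkeeping and a dominated-convergence step analogous to the one at the end of the proof of Theorem \ref{unitary}.
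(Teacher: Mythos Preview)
Your proposal is correct, but the paper takes a different and shorter route. Instead of verifying the adjoint identity $(\U\Psi,f)_p=\langle\Psi,Vf\rangle_{\Sn}$ for arbitrary $\Psi$ and $f$ (which, as you note, forces either a Fubini estimate or a term-by-term reduction), the paper works pointwise via the reproducing kernel $\nrs$ of $\mathcal O$. It first checks the single identity $\U\nrs(\cdot,\vecx)=\K(\cdot,\vecx)$ (an immediate computation from Lemma~\ref{product in Sn}), and then writes
\[
(\U^{-1}f)(\vecx)=\langle \U^{-1}f,\nrs(\cdot,\vecx)\rangle_{\Sn}=(f,\U\nrs(\cdot,\vecx))_p=(f,\K(\cdot,\vecx))_p,
\]
which is exactly the claimed integral. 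This avoids any double-integral interchange: the only analysis used is the reproducing property and one application of unitarity to move $\U$ across the inner product. Your approach buys you a self-contained adjoint computation that doesn't require introducing $\nrs$, at the cost of the integrability bookkeeping; the paper's approach trades that bookkeeping for the extra (but easy and independently useful) lemma that $\U$ sends the reproducing kernel of $\mathcal O$ to the coherent state. One small slip in your write-up: the kernel $\sum_\ell \frac{c_{\ell,p}}{\ell!}(\vecx\cdot\vecz/\hbar)^\ell$ is $\K(\vecx,\vecz)$ itself, not $\overline{\K(\vecx,\vecz)}$; the correct relation is $\K(\vecx,\vecz)=\overline{\K(\vecz,\vecx)}$, which is what makes the adjoint computation go through.
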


\begin{proof} Since the set $\{\phi_\bk\;|\;\bk\in \mathbb Z_+^n\}$ is orthonormal basis of $\mathcal O$, then the reproducing kernel,  $\nrs$, from $\mathcal O$ is
\begin{displaymath}
\nrs(\vecx,\vecy)=\sum_{\ell=0}^\infty\sum_{|\bk|=\ell} \phi_\bk(\vecx) \overline{\phi_\bk(\vecy)}=\sum_{\ell=0}^\infty \frac{(n)_\ell}{\ell!}(\vecx \cdot\vecy)^\ell = {}_2F_1(n,1;1;\vecx\cdot\vecy),
\end{displaymath}
 where ${}_2F_1$ denoting the Gauss hypergeometric function (see Secs. 9.1 of Ref.  \cite{G94} for definition and expression for this special function).
 
Even more, for $\vecy \in \Sn$ fixed, we obtain from Lemma \ref{product in Sn}
\begin{equation}\label{U reproducing kernel Sn}
\U \nrs(\cdot,\vecy)(\vecz) = \K(\vecz,\vecy)\;\hspace{0.5cm} \forall \vecz\in \mathbb C^n\;.
\end{equation} 

Moreover, from the reproducing property of $\nrs$, the unitarity of the transform $\U$ and the Eq. (\ref{U reproducing kernel Sn}), we have
\begin{align*}
(\U)^{-1}f(\vecx) & = \langle (\U)^{-1}f, \nrs(\cdot,\vecx)\rangle_{\Sn}\\
& = (f,\U\nrs(\cdot,\vecx))_p\\
& = (f,\K(\cdot,\vecx))_p 
\end{align*}
\end{proof}

\section{Coherent states for $\mathcal O$}\label{section coherent states}
 
 We consider the set of functions $\mathcal K=\{\K(\cdot,\vecz)\;|\; \vecz \in \mathbb C^n\} \subset \mathcal O$ defined by Eq. (\ref{coherent states}).
In  this section shows that the functions in $\mathcal K$  satisfy the conditions to define a Berezin symbolic calculus on $\mathcal O$.

First notice that the unitary operator $\U$ is a coherent states transform because its action on a function $\Phi$, in $\mathcal O$, is a function in $\E$ whose evaluation in $\vecz\in \mathbb C^n$ is equal to the $L^2(\Sn)$-inner product of $\Phi$ with the coherent states labeled by $\vecz$. This is
\begin{equation}\label{U inner product coherent states}
\U\Phi(\vecz)= \langle \Phi,\K(\cdot,\vecz)\rangle _{{\Sn}}\;.
\end{equation}
\begin{remark} 
 From Eq. (\ref{U inner product coherent states}) and theorem \ref{inverse} is not difficult to prove
\begin{equation*}\label{resolution of the identity for Sn}
\Psi(\vecx)= \int_{\mathbb C^n} \langle \Psi,\K(\vecz)\rangle _{_{\Sn}} \K(\vecx,\vecz)\dm(\vecz)\;,\hspace{0.5cm} \vecx\in\Sn\;,\Psi\in \mathcal O\;.
\end{equation*}
Thus the system of coherent states provides a resolution of identity for $\mathcal O$, this is the so-called reproducing property of the coherent states. 
\end{remark}

 
 
 \begin{theorem}\label{U coherent states}
  The unitary operator $\U$ applied to $\K(\cdot,\vecz)$ gives the reproducing kernel of the Hilbert space $\E$, $n \ge 2$, $p>-n$. Namely, for all $\vecz ,\vecw$ in $\mathbb C^n$ we have
  \begin{displaymath}
   [\U\K(\cdot,\vecz)](\vecw)=\T(\vecw,\vecz)\;.
  \end{displaymath}
 \end{theorem}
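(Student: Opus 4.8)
The plan is to avoid computing $\U\K(\cdot,\vecz)$ by hand at first and instead to pin it down \emph{weakly} in $\E$, using the unitarity of $\U$ together with the two reproducing structures already available: the reproducing kernel $\T$ of $\E$ from (\ref{property kernel E}) and the coherent-state identity (\ref{U inner product coherent states}). First I would check that both sides make sense as elements of $\E$. For every $\vecz\in\mathbb C^n$ the series (\ref{coherent states}) converges uniformly on $\Sn$ (as already used in the proof of Theorem \ref{unitary}, since $|\vecx\cdot\vecz|\le|\vecz|$ there), so $\K(\cdot,\vecz)\in\mathcal O$ and hence $\U\K(\cdot,\vecz)\in\E$; and $\T(\cdot,\vecz)\in\E$ because it is a reproducing kernel. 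It therefore suffices to show $(f,\U\K(\cdot,\vecz))_p=(f,\T(\cdot,\vecz))_p$ for all $f\in\E$, which forces $\U\K(\cdot,\vecz)=\T(\cdot,\vecz)$ on taking $f$ to be the difference.

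Now fix $f\in\E$ and set $\Psi=(\U)^{-1}f\in\mathcal O$. On the one hand the reproducing property (\ref{property kernel E}) gives $f(\vecz)=(f,\T(\cdot,\vecz))_p$. On the other hand, applying (\ref{U inner product coherent states}) to $\Psi$ and using that $\U$ preserves inner products,
\begin{displaymath}
 f(\vecz)=\U\Psi(\vecz)=\langle\Psi,\K(\cdot,\vecz)\rangle_{\Sn}=(\U\Psi,\U\K(\cdot,\vecz))_p=(f,\U\K(\cdot,\vecz))_p .
\end{displaymath}
Comparing the two expressions, $(f,\T(\cdot,\vecz)-\U\K(\cdot,\vecz))_p=0$ for every $f\in\E$; choosing $f=\T(\cdot,\vecz)-\U\K(\cdot,\vecz)$ yields $\|\T(\cdot,\vecz)-\U\K(\cdot,\vecz)\|_p=0$, that is, $[\U\K(\cdot,\vecz)](\vecw)=\T(\vecw,\vecz)$ for all $\vecw\in\mathbb C^n$.

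As a cross-check (and an alternative in case one prefers an explicit computation), one can expand $\K(\cdot,\vecz)$ in the orthonormal basis $\{\phi_\bk\}$ of $\mathcal O$ by the multinomial theorem, namely $\K(\cdot,\vecz)=\sum_\ell\sum_{|\bk|=\ell}\bigl(c_{\ell,p}/(\hbar^\ell\sqrt{\bk!\,(n)_\ell})\bigr)\,\overline\vecz^{\bk}\,\phi_\bk$, apply $\U$ term by term (legitimate by continuity of $\U$ and convergence of the expansion in $\mathcal O$) using $\mathbf U_{n,p}^\ell\phi_\bk=\Phi_{\bk,p}^\hbar$, and resum: the elementary identity $c_{\ell,p}/\sqrt{(n)_\ell(n+p)_\ell}=1/(n+p)_\ell$ from (\ref{constants coherent states}) together with $\sum_{|\bk|=\ell}\overline\vecz^{\bk}\vecw^{\bk}/\bk!=(\vecw\cdot\vecz)^\ell/\ell!$ collapses the double sum to $\sum_\ell\mathbf T_{n,p}^\ell(\vecw,\vecz)=\T(\vecw,\vecz)$ by (\ref{kernel series}). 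Yet another route is to insert $\Phi=\K(\cdot,\vecz)$ directly into (\ref{U inner product coherent states}), expand both copies of the series, and integrate term by term via Lemma \ref{product in Sn}, which kills the off-diagonal terms and evaluates the diagonal ones. In all cases the content is bookkeeping; I do not anticipate a genuine obstacle, the only point needing a little care being the justification of the termwise operations in these computational variants (uniform convergence of (\ref{coherent states}) on $\Sn$, or norm convergence of the basis expansion in $\mathcal O$, both reducing to absolute convergence of $\sum_\ell c_{\ell,p}(|\vecz|/\hbar)^\ell/\ell!$) — and the weak argument above sidesteps even that.
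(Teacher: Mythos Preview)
Your proof is correct. The main ``weak'' argument is a genuinely different route from the paper's. The paper proves the theorem by the direct computation you list as your third alternative: it writes $[\U\K(\cdot,\vecz)](\vecw)=\langle\K(\cdot,\vecz),\K(\cdot,\vecw)\rangle_{\Sn}$ via (\ref{U inner product coherent states}), expands both coherent states as power series, integrates term by term on $\Sn$ using Lemma \ref{product in Sn} (invoking dominated convergence to justify the interchange), and recognises the resulting series $\sum_\ell (\vecw\cdot\vecz/\hbar^2)^\ell/(\ell!\,(n+p)_\ell)$ as the expansion (\ref{kernel series}) of $\T(\vecw,\vecz)$.

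Your abstract argument trades that computation for the two reproducing identities (\ref{property kernel E}) and (\ref{U inner product coherent states}) plus unitarity of $\U$; it is shorter and avoids any termwise justification, at the cost of not producing the explicit series identity $\langle\K(\cdot,\vecz),\K(\cdot,\vecw)\rangle_{\Sn}=\sum_\ell (\vecw\cdot\vecz/\hbar^2)^\ell/(\ell!\,(n+p)_\ell)$ along the way. The paper's computation, by contrast, yields that formula directly (and it is reused immediately afterwards as Eq.~(\ref{inner product coherent states}) in Proposition \ref{proposition inner product coherent states}); with your approach one would recover it a posteriori from $\T(\vecw,\vecz)$. Either way the content is the same, and your second alternative (basis expansion via $\phi_\bk\mapsto\Phi_{\bk,p}^\hbar$) is also fine and is essentially the computation (\ref{kernel Ul}) run in reverse.
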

\begin{proof} From theorem \ref{unitary}, the dominated converge theorem and Lemma \ref{product in Sn} we obtain
\begin{align}
 [\U\K(\cdot,\vecz)](\vecw) & = \int_{\Sn} \K(\vecx,\vecz) \overline{\K(\vecx,\vecw)} \ds(\vecx)\nonumber\\
 & = \sum_{\ell=0}^\infty \left(\frac{c_{\ell,p}}{\ell!\hbar^{\ell}}\right)^2 \int_{\Sn}\left(\vecx\cdot\vecz\right)^\ell \left(\vecw\cdot\vecx\right)^\ell \ds(\vecx)\nonumber\\
 & = \sum_{\ell=0}^\infty \frac{1}{(n+p)_\ell}\frac{1}{\ell!} \left(\frac{\vecw\cdot \vecz}{\hbar^2}\right)^\ell .\nonumber
\end{align}
And this is the expression for the reproducing kernel $\T$ given in Eq. (\ref{kernel series}) as an infinity series.
\end{proof}


The proof of some theorems below uses an estimate of the norm of a coherent state $\K(\cdot,\vecz)$  which in turn is a consequence of the following estimate for the inner product of two coherent states:

\begin{prop}\label{proposition inner product coherent states}
 Let $\vecz,\vecw \in \mathbb C^n$ fixed. Assume $\vecz \cdot \vecw \ne 0$ and $|\mathrm{Arg}(\vecz \cdot \vecw)| <  \pi$. Then for $\hbar \to 0$
 \begin{align}
  \langle \K(\cdot,\vecz),\K(\cdot,\vecw)\rangle _{{\Sn}}= & \frac{\Gamma(n+p)}{2\sqrt \pi} \left(\frac{\vecw \cdot \vecz}{\hbar^2}\right)^{\frac{1}{2}(-p-n+\frac{1}{2})} \mathrm{e}^{\frac{2}{\hbar}\sqrt{\vecw \cdot \vecz}}\biggl[1 -\nonumber\\
& \left.\frac{(n+p-\frac{3}{2})(n+p-\frac{1}{2})}{4\sqrt{\vecw\cdot\vecz}}\hbar+\mathrm O(\hbar^2)\right]\;.\label{asymptotic inner product coherent states}
 \end{align}
\end{prop}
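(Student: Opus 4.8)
The plan is to start from the exact closed form of the inner product of two coherent states. By Theorem~\ref{U coherent states} (or equivalently by combining Eq.~(\ref{U inner product coherent states}) with the fact that $\U$ is unitary and sends $\K(\cdot,\vecz)$ to $\T(\cdot,\vecz)$), we have
\begin{equation*}
 \langle \K(\cdot,\vecz),\K(\cdot,\vecw)\rangle_{\Sn}
 = \overline{[\U\K(\cdot,\vecw)](\vecz)}
 = \T(\vecz,\vecw)
 = \Gamma(n+p)\left(\frac{\vecw\cdot\vecz}{\hbar^2}\right)^{\frac12(1-p-n)} \mathrm I_{n+p-1}\!\left(\frac{2\sqrt{\vecw\cdot\vecz}}{\hbar}\right),
\end{equation*}
using Eq.~(\ref{kernel}); note that $\vecz\cdot\vecw = \overline{\vecw\cdot\vecz}$ and the kernel is Hermitian. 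So the statement reduces entirely to inserting the classical asymptotic expansion of the modified Bessel function $\mathrm I_\mu(\zeta)$ for large argument into this formula.

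The key analytic input is the large-$\zeta$ asymptotic expansion
\begin{equation*}
 \mathrm I_\mu(\zeta) \sim \frac{\mathrm e^{\zeta}}{\sqrt{2\pi\zeta}} \left(1 - \frac{4\mu^2-1}{8\zeta} + \mathrm O(\zeta^{-2})\right), \qquad |\mathrm{Arg}(\zeta)| < \tfrac{\pi}{2},
\end{equation*}
valid as $\zeta\to\infty$ in the stated sector (see Sec.~8.45 of Ref.~\cite{G94}). Here I would set $\mu = n+p-1$ and $\zeta = 2\sqrt{\vecw\cdot\vecz}/\hbar$; the hypothesis $\vecz\cdot\vecw\neq 0$ together with $|\mathrm{Arg}(\vecz\cdot\vecw)|<\pi$ guarantees, via the chosen branch of the square root, that $\sqrt{\vecw\cdot\vecz}$ has argument in $(-\pi/2,\pi/2)$, so $\zeta$ stays in the admissible sector and $\zeta\to\infty$ precisely as $\hbar\to 0$. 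Substituting, the prefactor $1/\sqrt{2\pi\zeta}$ contributes $\tfrac{1}{\sqrt{2\pi}}\cdot\tfrac{\sqrt{\hbar}}{\sqrt{2}}\,(\vecw\cdot\vecz)^{-1/4} = \tfrac{1}{2\sqrt\pi}\,\hbar^{1/2}(\vecw\cdot\vecz)^{-1/4}$, which combines with $(\vecw\cdot\vecz/\hbar^2)^{\frac12(1-p-n)}$ to produce exactly the power $(\vecw\cdot\vecz/\hbar^2)^{\frac12(-p-n+\frac12)}$ appearing on the right-hand side of Eq.~(\ref{asymptotic inner product coherent states}); the exponential $\mathrm e^{\zeta} = \mathrm e^{2\sqrt{\vecw\cdot\vecz}/\hbar}$ matches directly. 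Finally, the first correction term: with $\mu = n+p-1$ one has $4\mu^2-1 = (2\mu-1)(2\mu+1) = (2n+2p-3)(2n+2p-1)$, so $\tfrac{4\mu^2-1}{8\zeta} = \tfrac{(2n+2p-3)(2n+2p-1)}{8}\cdot\tfrac{\hbar}{2\sqrt{\vecw\cdot\vecz}} = \tfrac{(n+p-\frac32)(n+p-\frac12)}{4\sqrt{\vecw\cdot\vecz}}\,\hbar$, which is precisely the bracketed correction with the correct sign. The $\mathrm O(\zeta^{-2})$ tail becomes $\mathrm O(\hbar^2)$, with the implied constant depending on the fixed data $\vecz,\vecw,n,p$ but uniform as $\hbar\to 0$.

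The routine bookkeeping is the algebra of collecting powers of $\hbar$ and of $\vecw\cdot\vecz$ and checking the constant $\tfrac{\Gamma(n+p)}{2\sqrt\pi}$; this is straightforward. The only point requiring genuine care — and the step I expect to be the main (minor) obstacle — is the branch/sector analysis: one must verify that the standard asymptotic expansion of $\mathrm I_\mu$ is legitimately applicable, i.e.\ that the argument $2\sqrt{\vecw\cdot\vecz}/\hbar$ lies in the sector $|\mathrm{Arg}(\cdot)| < \pi/2$ where the one-term (exponentially-dominant) expansion holds without a second, recessive, exponential contribution. This is exactly where the hypotheses $\vecz\cdot\vecw\neq0$ and $|\mathrm{Arg}(\vecz\cdot\vecw)|<\pi$ enter, through the convention $\sqrt{z} = |z|^{1/2}\mathrm e^{\imath\theta/2}$ with $\theta=\mathrm{Arg}(z)\in(-\pi,\pi)$ fixed earlier in the paper, which maps that sector into $\mathrm{Arg}(\sqrt{\vecw\cdot\vecz})\in(-\pi/2,\pi/2)$; one then also notes $\Re\sqrt{\vecw\cdot\vecz}>0$ so that $\mathrm e^{2\sqrt{\vecw\cdot\vecz}/\hbar}$ is genuinely growing and dominates, making the expansion an honest asymptotic series in $\hbar$.
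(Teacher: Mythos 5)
Your proposal is correct and follows essentially the same route as the paper: identify $\langle \K(\cdot,\vecz),\K(\cdot,\vecw)\rangle_{\Sn}$ with the reproducing kernel $\T$ via Theorem \ref{U coherent states} and the unitarity of $\U$, then insert the large-argument asymptotic expansion of $\mathrm I_{n+p-1}$ valid for $|\mathrm{Arg}(\omega)|<\pi/2$. Your explicit verification of the coefficient $(n+p-\tfrac32)(n+p-\tfrac12)$ and of the branch/sector condition is exactly the bookkeeping the paper leaves implicit.
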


\begin{proof} Since the transform $\U$ is a unitary transformation, we obtain from theorem \ref{U coherent states}, the reproducing property of $\T$ (see Eq. (\ref{property kernel E})) and the expression for the reproducing kernel $\T$ given in Eq. (\ref{kernel}) that
\begin{align}
\langle \K(\cdot,\vecz),\K(\cdot,\vecw)\rangle _{{\Sn}} & =\T(\vecw,\vecz)\nonumber\\
& = \Gamma(n+p) \left(\frac{\vecw \cdot \vecz}{\hbar^2}\right)^{\frac{1}{2}(-p-n+1)}\mathrm I_{n+p-1}\left(\frac{2\sqrt{\vecw\cdot\vecz}}{\hbar}\right).\label{inner product coherent states}
\end{align}
The modified Bessel function $\mathrm I_\vartheta$, $\vartheta\in\mathbb R$, has the following asymptotic expression when $|\omega| \to \infty$ (see formula 8.451-5 of Ref. \cite{G94})
\begin{equation}\label{asymptotic expression modified Bessel function}
\mathrm I_\vartheta(\omega)=\frac{\mathrm e^\omega}{\sqrt{2\pi\omega}}\sum_{k=0}^\infty \frac{(-1)^k}{(2\omega)^k}\frac{\Gamma(\vartheta+k+\frac{1}{2})}{k!\Gamma(\vartheta-k+\frac{1}{2})}\;,\hspace{0.5cm} |\mathrm{Arg}(\omega)|<\frac{\pi}{2}\;.
\end{equation}
From Eqs. (\ref{inner product coherent states}) and (\ref{asymptotic expression modified Bessel function}) we conclude the proof of this proposition.
\end{proof}

\begin{remark} 
We are mainly interested in using proposition \ref{proposition inner product coherent states} for the cases $\vecz=\vecw$ (and then $\mathrm{Arg}(\vecz\cdot\vecw) = 0$) in this paper. The case when $| \mathrm{Arg}(\vecz\cdot\vecw)| = \pi$ requires the use of an asymptotic expression valid in a different region than the one we are considering in proposition \ref{proposition inner product coherent states}. Thus if we take the branch of the square root function given by $\sqrt z = |z|^{1/2} exp(\imath\mathrm{Arg(z)}/2)$ with $0 < \mathrm{Arg}(z) < 2\pi$ then by using formula 8.451-5 in Ref. \cite{G94} we obtain the asymptotic expression,
\begin{align}
 \left\langle \K(\cdot,\vecz),\K(\cdot,\vecw)\right\rangle _{_{\Sn}}= & \frac{\Gamma(n+p)}{2\sqrt \pi} \left(\frac{\vecw \cdot \vecz}{\hbar^2}\right)^{\frac{1}{2}(-p-n+\frac{1}{2})} \nonumber\\
 & \left[\mathrm{e}^{\frac{2}{\hbar}\sqrt{\vecw \cdot \vecz}}  + \mathrm{e}^{-\frac{2}{\hbar}\sqrt{\vecw \cdot \vecz}}\mathrm{e}^{\pi\imath (n+p-\frac{1}{2})}\right] [1 + \mathrm O(\hbar)]\;.\label{other expression inner product}
\end{align}
Note that both asymptotic expressions in Eqs. (\ref{asymptotic inner product coherent states}) and (\ref{other expression inner product}) coincide up to an error of the order $\mathrm O(\hbar^\infty)$ in the common region where they are valid.
\end{remark}

From the proposition \ref{proposition inner product coherent states}, we obtain an estimate for the norm of a given coherent states
\begin{prop}
For $\vecz \in \mathbb C^n-\{0\}$
\begin{align}
|| \K(\cdot,\vecz)||_{\Sn}^2 = \frac{\Gamma(n+p)}{2\sqrt \pi} &\left(\frac{|\vecz|}{\hbar}\right)^{-p-n+\frac{1}{2}} \mathrm{e}^{\frac{2}{\hbar}|\vecz|}\nonumber\\
& \left[1 -\frac{(n+p-\frac{3}{2})(n+p-\frac{1}{2})}{4|\vecz|}\hbar+\mathrm O(\hbar^2)\right]\;.\label{norm coherent states}
\end{align}
\end{prop}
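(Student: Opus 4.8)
The plan is to read off the estimate as the diagonal specialization $\vecw=\vecz$ of Proposition \ref{proposition inner product coherent states}. First I would record that $\K(\cdot,\vecz)$ indeed lies in $L^2(\Sn)$: this was already observed in the proof of Theorem \ref{unitary}, where the series (\ref{coherent states}) defining $\K(\cdot,\vecz)$ was shown to be bounded on $\Sn$ for each fixed $\vecz\in\mathbb C^n$. Hence $\|\K(\cdot,\vecz)\|_{\Sn}^2=\langle\K(\cdot,\vecz),\K(\cdot,\vecz)\rangle_{\Sn}$ is a well-defined (nonnegative) quantity to which Proposition \ref{proposition inner product coherent states} can be applied.

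Next I would check that $\vecw=\vecz$ satisfies the hypotheses of Proposition \ref{proposition inner product coherent states}. By the convention $\vecz\cdot\vecw=\sum_{\ell=1}^n z_\ell\overline w_\ell$ we have $\vecz\cdot\vecz=\sum_{\ell=1}^n|z_\ell|^2=|\vecz|^2$, which is a strictly positive real number whenever $\vecz\neq 0$. In particular $\vecz\cdot\vecz\neq 0$ and $\mathrm{Arg}(\vecz\cdot\vecz)=0$, so $|\mathrm{Arg}(\vecz\cdot\vecz)|<\pi$; moreover, with the branch of the square root fixed in Section \ref{class of holomorphic spaces} one has $\sqrt{\vecz\cdot\vecz}=|\vecz|$ (the argument being $0$, the principal branch returns the positive real root). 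Thus the diagonal lands squarely on the positive real axis, the most favorable subcase of the region where the expansion (\ref{asymptotic inner product coherent states}) holds.

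Finally I would substitute $\vecw=\vecz$ into (\ref{asymptotic inner product coherent states}): the prefactor $\bigl(\vecw\cdot\vecz/\hbar^2\bigr)^{\frac12(-p-n+\frac12)}$ becomes $\bigl(|\vecz|^2/\hbar^2\bigr)^{\frac12(-p-n+\frac12)}=\bigl(|\vecz|/\hbar\bigr)^{-p-n+\frac12}$, the exponential becomes $\mathrm e^{\frac{2}{\hbar}|\vecz|}$, and the bracketed correction becomes $1-\frac{(n+p-\frac32)(n+p-\frac12)}{4|\vecz|}\hbar+\mathrm O(\hbar^2)$, which is precisely (\ref{norm coherent states}). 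There is essentially no obstacle here, since all the analytic content sits in Proposition \ref{proposition inner product coherent states}; the only point deserving a line of comment is the verification just made that the diagonal $\vecw=\vecz$ lies inside the admissible region for the asymptotics and that the chosen branch of $\sqrt{\cdot}$ produces $|\vecz|$ rather than $-|\vecz|$.
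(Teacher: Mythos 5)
Your proposal is correct and is exactly the route the paper takes: the proposition is stated there as an immediate corollary of Proposition \ref{proposition inner product coherent states}, obtained by setting $\vecw=\vecz$ so that $\vecz\cdot\vecz=|\vecz|^2>0$ and $\sqrt{\vecz\cdot\vecz}=|\vecz|$. Your explicit check that the diagonal satisfies the hypotheses $\vecz\cdot\vecw\neq 0$ and $|\mathrm{Arg}(\vecz\cdot\vecw)|<\pi$ is a welcome addition that the paper leaves implicit.
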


We end this section showing that the family of coherent states $\mathcal K$ is complete
\begin{prop}\label{supercomplete}
The family $\{\K(\cdot,\vecz)\;|\;\vecz \in \mathbb C^n\}$ form a complete system for $\mathcal O$.
\end{prop}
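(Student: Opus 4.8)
The plan is to deduce completeness directly from the unitarity of $\U$ established in Theorem~\ref{unitary} together with the identification of $\U$ with the coherent state transform in Eq.~(\ref{U inner product coherent states}). Recall that, in the sense of property~a) of the introduction, saying that $\mathcal K$ is a complete system for $\mathcal O$ means that Parseval's identity
\[
\langle \Psi_1,\Psi_2\rangle_{\Sn} = \int_{\mathbb C^n} \langle \Psi_1,\K(\cdot,\vecz)\rangle_{\Sn}\,\langle \K(\cdot,\vecz),\Psi_2\rangle_{\Sn}\,\dm(\vecz)
\]
holds for all $\Psi_1,\Psi_2 \in \mathcal O$; equivalently, that the only $\Psi \in \mathcal O$ orthogonal to every $\K(\cdot,\vecz)$ is $\Psi = 0$.

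First I would use Eq.~(\ref{U inner product coherent states}) to rewrite $\langle \Psi,\K(\cdot,\vecz)\rangle_{\Sn} = \U\Psi(\vecz)$ for all $\vecz \in \mathbb C^n$; this can also be seen directly by combining Theorem~\ref{U coherent states}, the unitarity of $\U$ and the reproducing property~(\ref{property kernel E}) of $\T$, since $\langle \Psi,\K(\cdot,\vecz)\rangle_{\Sn} = (\U\Psi,\U\K(\cdot,\vecz))_p = (\U\Psi,\T(\cdot,\vecz))_p = \U\Psi(\vecz)$. Then for $\Psi_1,\Psi_2 \in \mathcal O$ one computes
\[
\int_{\mathbb C^n} \langle \Psi_1,\K(\cdot,\vecz)\rangle_{\Sn}\,\langle \K(\cdot,\vecz),\Psi_2\rangle_{\Sn}\,\dm(\vecz) = \int_{\mathbb C^n} \U\Psi_1(\vecz)\,\overline{\U\Psi_2(\vecz)}\,\dm(\vecz) = (\U\Psi_1,\U\Psi_2)_p,
\]
which equals $\langle \Psi_1,\Psi_2\rangle_{\Sn}$ because $\U$ is an isometry onto $\E$; this is precisely Parseval's identity. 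All integrals here are well defined because $\U\Psi_i \in \E \subset L^2(\mathbb C^n,\dm)$ and the Cauchy--Schwarz inequality. For the equivalent nondegeneracy formulation the argument is even shorter: if $\Psi \in \mathcal O$ satisfies $\langle \Psi,\K(\cdot,\vecz)\rangle_{\Sn}=0$ for every $\vecz$, then $\U\Psi\equiv 0$, hence $\U\Psi = 0$ in $\E$, and injectivity of $\U$ forces $\Psi=0$.

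I do not expect a genuine obstacle here: the substantive work was already carried out in Theorems~\ref{unitary} and \ref{U coherent states}. The only point that merits a word of care is the measurability and integrability in the Parseval computation, which is dispatched by the membership $\U\Psi_i \in \E$ together with Cauchy--Schwarz (and, if one prefers to phrase the statement through the resolution of the identity recorded in the remark preceding Theorem~\ref{U coherent states}, by a routine application of Fubini--Tonelli).
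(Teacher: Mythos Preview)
Your argument is correct and essentially identical to the paper's own proof: both use the unitarity of $\U$ from Theorem~\ref{unitary} together with Eq.~(\ref{U inner product coherent states}) to rewrite the Parseval integral as $(\U\Psi_1,\U\Psi_2)_p=\langle\Psi_1,\Psi_2\rangle_{\Sn}$. Your additional remarks on integrability and the equivalent nondegeneracy formulation are sound but not needed beyond what the paper does.
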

 
\begin{proof} Let $\Phi,\Psi \in \mathcal O$, since the transform $\U$ is a unitary transformation   
we have
\begin{align*}
\langle \Phi,\Psi\rangle _{{\Sn}} & = (\U\Phi,\U\Psi)_p\\
& = \int_{\mathbb C^n} \U\Phi(\vecz)\overline{\U\Psi(\vecz)} \dm(\vecz)\\
& = \int_{\mathbb C^n} \langle \Phi,\K(\cdot,\vecz)\rangle _{{\Sn}} \langle \K(\cdot,\vecz),\Psi) \rangle _{{\Sn}}\dm(\vecz)\;,
\end{align*}
 where we have used Eq. (\ref{U inner product coherent states}).
 \end{proof}

\section{Berezin symbolic calculus}\label{section Berezin symbolic calculus}
According to Berezin's theory (see Ref. \cite{B74}), from Theorem \ref{unitary}, Eq. (\ref{U inner product coherent states}) and the proposition \ref{supercomplete}, we may consider the following
\begin{definition}
The Berezin's symbol of a bounded linear operator $A$ with domain in $\mathcal O$ is defined, for every $\vecz \in \mathbb C^n$, by
\begin{equation}\label{definition Berezin transform}
\B( A)(\vecz)=\frac{\langle A\K(\cdot,\vecz),\K(\cdot,\vecz)\rangle_{{\Sn}}}{\langle\K(\cdot,\vecz),\K(\cdot,\vecz)\rangle_{{\Sn}}}\;.
\end{equation}
\end{definition}
From Eq. (\ref{inner product coherent states}), we have that 
\begin{equation*}
||\K(\cdot,\vecz)||_{\Sn}^2=\Gamma(n+p) \left(\frac{|\vecz|}{\hbar}\right)^{-p-n+1}\mathrm I_{n+p-1}\left(\frac{2|\vecz|}{\hbar}\right)>0,
\end{equation*}
 hence the coherent states are continuous, i.e. the map $\vecz \to |\K(\cdot,\vecz)|$ is continuous. Therefore, if $A:\mathcal O\to \mathcal O$ is a bounded operator, its Berezin's symbol, can be extended uniquely to a function defined on a neighbourhood of the diagonal in $\mathbb C^n \times \mathbb C^n$  in such a way that it is holomorphic in the first factor and anti-holomorphic in the second. In fact, such an extension is given explicitly by
\begin{equation}\label{extended covariant symbol}
\B(A)(\vecz,\vecw):=\frac{\langle A\K(\cdot,\vecz),\K(\cdot,\vecw)\rangle_{{\Sn}}}{\langle\K(\cdot,\vecz),\K(\cdot,\vecw)\rangle_{{\Sn}}}\;.
\end{equation}
\begin{remark}
By Eqs. (\ref{inner product coherent states}), the extended Berezin's symbol has singularities in the zeros of the modified Bessel function $\mathrm I_{n+p-1}(z)$, which are well known (see Ref. \cite{L65} Sec. 5.13) and where $\vecw\cdot\vecz=0$. Then the extended Berezin's symbol is defined on $\mathbb C^n \times \mathbb C^n \setminus \mathcal S$, with
\begin{equation*}
\mathcal S=\left\{(\vecw,\vecz)\in \mathbb C^n \times \mathbb C^n\;|\; \vecw \cdot\vecz=0 \mbox{ or }2\sqrt{\vecw\cdot\vecz}/\hbar=\imath \lambda\right\}
\end{equation*}
where $\lambda$ is a negative real  number that satisfies  $\mathrm I_{n+p-1}(\imath\lambda)=0$.
\end{remark}

We now give the rules for symbolic calculus

\begin{prop}\label{proposition rules for Berezin}
Let $A, B$ bounded linear operators with domain in $\mathcal O$. Then for $\vecz,\vecw \in \mathbb C^n$ and $\phi\in \mathcal O$ we have
\begin{align}
\B(\mathrm{Id})& = 1\;, \mbox{with $\mathrm{Id}$ the identity operator,}\nonumber\\
\B(A^*) (\vecz,\vecw) & = \overline{\B(A)(\vecw,\vecz)}\;,\nonumber\\
\B(AB)(\vecz,\vecw) & = \frac{2}{(\pi\hbar)^n\hbar}\int\limits_{\mathbb C^n} \B(B)(\vecz,\vecu) \B(A)(\vecu,\vecw) \left[\frac{\vecu\cdot \vecz\; \vecw\cdot \vecu}{\vecw\cdot\vecz}\right]^{\frac{1}{2}(-p-n+1)}\nonumber\\
 & \hspace{0.5cm} \frac{\mathrm I_{n+p-1}\left(\frac{2\sqrt{\vecu\cdot \vecz}}{\hbar}\right)\mathrm I_{n+p-1}\left(\frac{2\sqrt{\vecw\cdot \vecu}}{\hbar}\right)}{\mathrm I_{n+p-1}\left(\frac{2\sqrt{\vecw\cdot \vecz}}{\hbar}\right)} |\vecu|^{p} \mathrm K_{p}\left(\frac{2|\vecu|}{\hbar}\right) \mathrm d\vecu \mathrm d\overline\vecu\;,\label{Eq 3}\\
 \U(A\phi)(\vecz) & = \frac{2}{(\pi\hbar)^n\hbar}\int\limits_{\mathbb C^n} \U \phi(\vecw) \B(A)(\vecw,\vecz)  \big(\vecz \cdot \vecw\big)^{\frac{1}{2}(-p-n+1)}  \nonumber\\
 &\hspace{0.5cm}\mathrm I_{n+p-1}\left(\frac{2\sqrt{\vecz\cdot \vecw}}{\hbar}\right) |\vecw|^{p} \mathrm K_{p}\left(\frac{2|\vecw|}{\hbar}\right) \mathrm d\vecw \mathrm d\overline\vecw.\label{Eq 4}
\end{align}
\end{prop}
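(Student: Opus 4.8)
The plan is to derive each of the four identities directly from the general Berezin formalism recalled in the introduction, using the fact established in Theorems \ref{unitary} and \ref{U coherent states} that $\U$ is a unitary map carrying the coherent states $\K(\cdot,\vecz)$ to the reproducing kernel $\T(\cdot,\vecz)$ of $\E$, so that our situation fits Berezin's abstract scheme with $H=\mathcal O$, $M=\mathbb C^n$, $\mathrm d\mu=\dm$, and $e_{\alpha}=\K(\cdot,\vecz)$. The identities $\B(\mathrm{Id})=1$ and $\B(A^*)(\vecz,\vecw)=\overline{\B(A)(\vecw,\vecz)}$ are then immediate: the first follows from the definition (\ref{extended covariant symbol}) on taking $A=\mathrm{Id}$, and the second from writing $\langle A^*\K(\cdot,\vecz),\K(\cdot,\vecw)\rangle_{\Sn}=\overline{\langle A\K(\cdot,\vecw),\K(\cdot,\vecz)\rangle_{\Sn}}$ together with the Hermitian symmetry $\langle\K(\cdot,\vecz),\K(\cdot,\vecw)\rangle_{\Sn}=\overline{\langle\K(\cdot,\vecw),\K(\cdot,\vecz)\rangle_{\Sn}}$ of the denominator, which in turn is visible from (\ref{inner product coherent states}).

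For the composition formula (\ref{Eq 3}) and the ``Toeplitz-type'' action formula (\ref{Eq 4}), I would specialize Berezin's abstract identities (\ref{Eq 1}) and (\ref{Eq 2}) to our data. Concretely, in (\ref{Eq 2}) put $e_\alpha=\K(\cdot,\vecz)$, $e_\beta=\K(\cdot,\vecw)$, $e_\gamma=\K(\cdot,\vecu)$ and $\mathrm d\mu(\gamma)=\dm(\vecu)$; the kernel factor becomes
\[
\frac{\langle\K(\cdot,\vecz),\K(\cdot,\vecu)\rangle_{\Sn}\,\langle\K(\cdot,\vecu),\K(\cdot,\vecw)\rangle_{\Sn}}{\langle\K(\cdot,\vecz),\K(\cdot,\vecw)\rangle_{\Sn}},
\]
and each of the three inner products is replaced by the closed form (\ref{inner product coherent states}), i.e.\ $\langle\K(\cdot,\vecz),\K(\cdot,\vecw)\rangle_{\Sn}=\Gamma(n+p)(\vecw\cdot\vecz/\hbar^2)^{(1-p-n)/2}\mathrm I_{n+p-1}(2\sqrt{\vecw\cdot\vecz}/\hbar)$. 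Collecting the resulting powers of $\hbar$ and of $\Gamma(n+p)$, and multiplying $\dm(\vecu)$ out via (\ref{measure on U}), should produce exactly the stated prefactor $\tfrac{2}{(\pi\hbar)^n\hbar}$ times $|\vecu|^p\mathrm K_p(2|\vecu|/\hbar)$ and the displayed ratio of Bessel functions; the power $(\vecu\cdot\vecz\,\vecw\cdot\vecu/\vecw\cdot\vecz)^{(1-p-n)/2}$ is precisely what the three $(\,\cdot\,)^{(1-p-n)/2}$ prefactors combine to give. Formula (\ref{Eq 4}) is obtained the same way from (\ref{Eq 1}), noting that $\widehat{Af}=\U(Af)$, $\hat f=\U f$, $\Be(A)(\beta,\alpha)=\B(A)(\vecw,\vecz)$, and $(e_\beta,e_\alpha)=\langle\K(\cdot,\vecw),\K(\cdot,\vecz)\rangle_{\Sn}$, again expanded by (\ref{inner product coherent states}) and (\ref{measure on U}); here only one inner-product factor survives in the integrand, so one power of the Bessel prefactor appears rather than a ratio.

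One should, strictly speaking, first check that Berezin's abstract identities (\ref{Eq 1})–(\ref{Eq 2}) are actually applicable in our slightly nonstandard setting — recall the remark in the introduction that here $H$ is not literally included in $L^2(M,\mathrm d\mu)$ and the $e_\alpha$ are not the reproducing kernel of $\E$ but its $\U$-preimages. The cleanest route is to transport everything through the unitary $\U$: since $\U\K(\cdot,\vecz)=\T(\cdot,\vecz)$ is the reproducing kernel of $\E$, the operator $\widetilde A:=\U A\U^{-1}$ on $\E$ has $\B(A)(\vecz,\vecw)=\Be(\widetilde A)(\vecz,\vecw)$ in the genuinely Berezinian sense on $\E$, and then (\ref{Eq 1})–(\ref{Eq 2}) hold verbatim for $\widetilde A$ on $\E$ (with $e_\alpha$ the honest reproducing kernel), from which (\ref{Eq 3})–(\ref{Eq 4}) follow by pulling back. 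I expect this transfer-of-setting bookkeeping, rather than any single computation, to be the main obstacle: once it is in place the rest is the routine substitution of (\ref{inner product coherent states}) and (\ref{measure on U}) and tidying up the $\hbar$- and $\Gamma$-powers. A secondary point to address is convergence/absolute integrability of the integrals in (\ref{Eq 3})–(\ref{Eq 4}), which can be handled using boundedness of $A$, the Cauchy–Schwarz estimate $|f(\vecz)|\le\|f\|_p\|\T(\cdot,\vecz)\|$ noted after the Proposition in Section \ref{class of holomorphic spaces}, and the norm asymptotics (\ref{norm coherent states}).
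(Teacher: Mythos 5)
Your proposal is correct and follows exactly the route the paper takes: its proof is a one-line appeal to Berezin's abstract formulas (\ref{Eq 1})--(\ref{Eq 2}) combined with Eqs. (\ref{U inner product coherent states}) and (\ref{inner product coherent states}), which is precisely the substitution you carry out (and your bookkeeping of the $\hbar$- and $\Gamma(n+p)$-powers against the measure (\ref{measure on U}) does reproduce the stated prefactor $2/((\pi\hbar)^n\hbar)$). Your additional remarks on transporting the setting through $\U$ and on integrability only make explicit what the paper leaves implicit.
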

\begin{proof}
This is a direct consequence from the formulas in Ref. \cite{B74} (see Eqs. (\ref{Eq 1}),  (\ref{Eq 2})), and the Eqs. (\ref{U inner product coherent states}) and (\ref{inner product coherent states}).
\end{proof}

\begin{corollary}
Let $A:L^2(\Sn)\to L^2(\Sn)$ be a bounded operator, and $A^\#$ the operator on $\E$ with Schwartz kernel the function $\mathcal K_A$ defined on $\mathbb C^n \times \mathbb C^n$, by 
\begin{displaymath}
\mathcal K_A(\vecz,\vecw):=\langle A\K(\cdot,\vecw),\K(\cdot,\vecz)\rangle_{_{\Sn}}\;.
\end{displaymath}
Then
\begin{equation}
A^\#f(\vecz):=\U A(\U)^{-1} f(\vecz)\;\hspace{0.5cm}\forall f\in \E\;,\;\vecz \in \mathbb C^n.
\end{equation}
\end{corollary}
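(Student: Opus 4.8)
The plan is to show that the operator $A^\#$, characterized intrinsically as the operator on $\E$ whose Schwartz kernel (against the measure $\dm$) is $\mathcal K_A$, coincides with the transported operator $\U A (\U)^{-1}$. Since $\U$ is a unitary isomorphism from $\mathcal O$ onto $\E$ (Theorem \ref{unitary}), the conjugated operator $\U A(\U)^{-1}$ is a well-defined bounded operator on $\E$, and the content of the corollary is simply the identification of its integral kernel. So the whole proof reduces to computing, for $f\in\E$, the value $\bigl(\U A(\U)^{-1}f\bigr)(\vecz)$ and recognizing it as $\int_{\mathbb C^n} \mathcal K_A(\vecz,\vecw) f(\vecw)\,\dm(\vecw)$.

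First I would use the coherent-state reproducing identity. Writing $g=(\U)^{-1}f\in\mathcal O$, Eq. (\ref{U inner product coherent states}) gives $\bigl(\U A g\bigr)(\vecz)=\langle Ag,\K(\cdot,\vecz)\rangle_{\Sn}$. Next I would insert the resolution of the identity for $\mathcal O$ from the Remark following Proposition \ref{supercomplete}, namely $g(\vecx)=\int_{\mathbb C^n}\langle g,\K(\cdot,\vecw)\rangle_{\Sn}\,\K(\vecx,\vecw)\,\dm(\vecw)$, i.e. $g=\int_{\mathbb C^n} \U g(\vecw)\,\K(\cdot,\vecw)\,\dm(\vecw)$, which in our notation is $g=\int_{\mathbb C^n} f(\vecw)\,\K(\cdot,\vecw)\,\dm(\vecw)$. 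Applying $A$ (bounded, hence commuting with the integral) and then pairing with $\K(\cdot,\vecz)$ yields
\begin{equation*}
\bigl(\U A(\U)^{-1}f\bigr)(\vecz)=\int_{\mathbb C^n} f(\vecw)\,\langle A\K(\cdot,\vecw),\K(\cdot,\vecz)\rangle_{\Sn}\,\dm(\vecw)=\int_{\mathbb C^n}\mathcal K_A(\vecz,\vecw)\,f(\vecw)\,\dm(\vecw),
\end{equation*}
which is exactly the statement that $A^\#$ has Schwartz kernel $\mathcal K_A$. Finally, uniqueness: two bounded operators on $\E$ with the same Schwartz kernel against $\dm$ agree, because testing against the reproducing kernel $\T(\cdot,\vecz)$ and using Eq. (\ref{property kernel E}) recovers the kernel from the operator; this legitimizes defining $A^\#$ by its kernel and then asserting the displayed identity.

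The main obstacle is justifying the interchange of $A$ with the vector-valued integral $g=\int_{\mathbb C^n} f(\vecw)\,\K(\cdot,\vecw)\,\dm(\vecw)$ and of $\langle A(\cdot),\K(\cdot,\vecz)\rangle_{\Sn}$ with that same integral. The cleanest route is to observe that the map $\vecw\mapsto f(\vecw)\,\K(\cdot,\vecw)$ is a Bochner-integrable $\mathcal O$-valued function: $\K(\cdot,\vecw)$ is continuous in $\vecw$ with $\|\K(\cdot,\vecw)\|_{\Sn}$ controlled by Eq. (\ref{norm coherent states}), and $\int_{\mathbb C^n}|f(\vecw)|\,\|\K(\cdot,\vecw)\|_{\Sn}\,\dm(\vecw)<\infty$ follows from Cauchy--Schwarz in $L^2(\mathbb C^n,\dm)$ together with the fact that $\int_{\mathbb C^n}\|\K(\cdot,\vecw)\|_{\Sn}^2\,\dm(\vecw)=\|\K\|^2$ is finite (one can also invoke $\U\K(\cdot,\vecw)=\T(\cdot,\vecw)$ from Theorem \ref{U coherent states} so the integral is literally reproducing $\T$, hence convergent in $\E$). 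Once Bochner integrability is in hand, the bounded operator $A$ and the bounded linear functional $\langle\,\cdot\,,\K(\cdot,\vecz)\rangle_{\Sn}$ both pass inside the integral automatically, and the computation above goes through verbatim.
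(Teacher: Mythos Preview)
Your overall strategy coincides with the paper's: the paper simply invokes Eq.~(\ref{Eq 4}) (Berezin's formula (\ref{Eq 1}) specialized to the present setting) together with (\ref{extended covariant symbol}) and (\ref{inner product coherent states}), and these combine to give $\U A(\U)^{-1}f(\vecz)=\int_{\mathbb C^n}f(\vecw)\,\mathcal K_A(\vecz,\vecw)\,\dm(\vecw)$. You rebuild that same formula from the resolution of the identity; the underlying mechanism---completeness of the coherent states (Proposition~\ref{supercomplete})---is identical.

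There is, however, a genuine gap in your Bochner-integrability justification. You assert that $\int_{\mathbb C^n}\|\K(\cdot,\vecw)\|_{\Sn}^2\,\dm(\vecw)$ is finite, but by Theorem~\ref{U coherent states} this integral equals $\int_{\mathbb C^n}\T(\vecw,\vecw)\,\dm(\vecw)$, i.e.\ the trace of the identity on the infinite-dimensional space $\E$, which diverges (equivalently it is $\sum_{\bk}\|\Phi_{\bk,p}^\hbar\|_p^2=\sum_{\bk}1=\infty$). Hence your Cauchy--Schwarz step fails, and ``bounded, hence commuting with the integral'' is not justified as written. Your alternative suggestion does not repair this either: it shows the integral reproduces $f$ weakly but gives no norm control. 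The clean fix---and what the paper effectively does via Eq.~(\ref{Eq 4})---is to work weakly from the outset: write $\langle Ag,\K(\cdot,\vecz)\rangle_{\Sn}=\langle g,A^*\K(\cdot,\vecz)\rangle_{\Sn}$ and apply the Parseval identity of Proposition~\ref{supercomplete} with $\Phi=g$ and $\Psi=A^*\K(\cdot,\vecz)$, obtaining directly
\[
\bigl(\U A(\U)^{-1}f\bigr)(\vecz)=\int_{\mathbb C^n}\langle g,\K(\cdot,\vecw)\rangle_{\Sn}\,\langle A\K(\cdot,\vecw),\K(\cdot,\vecz)\rangle_{\Sn}\,\dm(\vecw)=\int_{\mathbb C^n}f(\vecw)\,\mathcal K_A(\vecz,\vecw)\,\dm(\vecw),
\]
with no vector-valued integral to interchange.
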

\begin{proof} Let $f \in \E$, and $\phi=(\U)^{-1} f$. From Eqs. (\ref{Eq 4}),  (\ref{extended covariant symbol}) and (\ref{inner product coherent states})
\begin{align*}
\U A (\U)^{-1} f (\vecz)= \int\limits_{\mathbb C^n} f(\vecw) \mathcal K_A(\vecz,\vecw) \dm(\vecw)\,.
\end{align*}
\end{proof}

\subsection{Asymptotic expansion of the Berezin's symbol.}
In this section, we obtain asymptotic  expansion of Berezin's symbol of Toeplitz operator.


Let $\Pro:L^2(\Sn) \to \mathcal O$ the orthogonal projection and $A$ pseudo-differential operator  on $\Sn$ of order zero. The \textsl{Toeplitz operator} is defined as
\begin{displaymath}
\To_A = \Pro A \Pro\;.
\end{displaymath}
For $\psi \in C^\infty(\Sn)$, with $C^\infty(\Sn)$ be the algebra of complex-valued $C^\infty$ functions on $\Sn$, let $\mathbf M_\psi$ the operator of multiplication by $\psi$. For simplicity we use notation $\To_\phi:=\To_{\mathrm M_\phi}$.


\begin{theorem}
Let  $\bk\in \mathbb Z_+^n$ be a multi-index. Then
\begin{align*}
\B(\To_{\vecx^\bk})(\vecz,\vecw) & = \frac{\left(\displaystyle\frac{\vecw\cdot\vecz}{\hbar^2}\right)^{\frac{1}{2}(n+p-1)}}{\mathrm I_{n+p-1}\left(2\sqrt{\vecw\cdot\vecz}/\hbar\right)}\left(\frac{\vecw}{\hbar}\right)^\bk  \sum_{\ell=0}^\infty \left(\frac{\vecw\cdot\vecz}{\hbar^2}\right)^\ell \frac{c_{\ell,p}}{c_{|\bk|+\ell,p}}\\
&\hspace{0.5cm} \frac{1}{\ell!\Gamma(|\bk|+\ell+n+p)}
\end{align*}
where $c_{\ell,p}$ was define in Eq. (\ref{constants coherent states}).
\end{theorem}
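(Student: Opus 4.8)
The plan is to compute the numerator $\langle \To_{\vecx^\bk}\K(\cdot,\vecz),\K(\cdot,\vecw)\rangle_{\Sn}$ directly from the definition of the Toeplitz operator and the series expansion of the coherent states, and then divide by $\langle\K(\cdot,\vecz),\K(\cdot,\vecw)\rangle_{\Sn}$, which by Eq. (\ref{inner product coherent states}) is $\Gamma(n+p)(\vecw\cdot\vecz/\hbar^2)^{\frac12(-p-n+1)}\mathrm I_{n+p-1}(2\sqrt{\vecw\cdot\vecz}/\hbar)$. The key observation is that $\To_{\vecx^\bk}=\Pro\,\mathbf M_{\vecx^\bk}\,\Pro$, and since $\K(\cdot,\vecz)\in\mathcal O$ already, the first projection is the identity on it; the outer projection $\Pro$ can be dropped as well because it sits inside an inner product against $\K(\cdot,\vecw)\in\mathcal O$ and $\Pro$ is self-adjoint. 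Hence
\begin{align*}
\langle \To_{\vecx^\bk}\K(\cdot,\vecz),\K(\cdot,\vecw)\rangle_{\Sn}
&=\langle \vecx^\bk\,\K(\cdot,\vecz),\K(\cdot,\vecw)\rangle_{\Sn}\\
&=\int_{\Sn}\vecx^\bk\,\K(\vecx,\vecz)\,\overline{\K(\vecx,\vecw)}\,\ds(\vecx)\,.
\end{align*}

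Next I would substitute the defining series (\ref{coherent states}) for both coherent states, obtaining a double sum $\sum_{s,\ell\ge0}\frac{c_{s,p}}{s!\hbar^s}\frac{c_{\ell,p}}{\ell!\hbar^\ell}\int_{\Sn}\vecx^\bk(\vecx\cdot\vecz)^s(\vecw\cdot\vecx)^\ell\ds(\vecx)$, where the interchange of sum and integral is justified by uniform boundedness of the series on $\Sn$ together with dominated convergence (exactly as in the proof of Theorem \ref{U coherent states}). Now Lemma \ref{product in Sn} — or rather the sphere integral that Lemma \ref{product in Enp} reduces to — kills every term with $|\bk|+s\ne\ell$, so only the diagonal $\ell=|\bk|+s$ survives, collapsing the double sum to a single sum over $s$. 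The surviving integral $\int_{\Sn}\vecx^\bk(\vecx\cdot\vecz)^s(\vecw\cdot\vecx)^{|\bk|+s}\ds(\vecx)$ should evaluate, using the same monomial-integration identity that underlies Eqs. (\ref{phi})–(\ref{kernel Ul}), to a constant times $\vecw^\bk(\vecw\cdot\vecz)^s$; concretely one expects something of the shape $\dfrac{\bk!\,s!}{\Gamma(n)}\cdot\dfrac{1}{\Gamma(|\bk|+s+n)}\cdot\dfrac{\vecw^\bk(\vecw\cdot\vecz)^s}{(\text{monomial normalizations})}$, the precise constant being read off from the orthonormality relations for $\{\phi_\bk\}$ in $\mathcal O$.

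After inserting $c_{\ell,p}^2=(n)_\ell/(n+p)_\ell$ and doing the bookkeeping, the sum over $s$ should reorganize into $(\vecw/\hbar)^\bk\sum_{\ell=0}^\infty(\vecw\cdot\vecz/\hbar^2)^\ell\,\frac{c_{\ell,p}}{c_{|\bk|+\ell,p}}\frac{1}{\ell!\,\Gamma(|\bk|+\ell+n+p)}$ after I rename $s\to\ell$; dividing by the denominator $\langle\K(\cdot,\vecz),\K(\cdot,\vecw)\rangle_{\Sn}$ and cancelling the common factor $\Gamma(n+p)$ against the normalization constants then produces exactly the stated formula, with the $(\vecw\cdot\vecz/\hbar^2)^{\frac12(n+p-1)}/\mathrm I_{n+p-1}(2\sqrt{\vecw\cdot\vecz}/\hbar)$ prefactor coming from reciprocating the Bessel expression in (\ref{inner product coherent states}). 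The main obstacle is purely computational: getting the combinatorial constant in the monomial integral $\int_{\Sn}\vecx^\bk(\vecx\cdot\vecz)^s(\vecw\cdot\vecx)^{|\bk|+s}\ds$ exactly right — in particular tracking how the $\bk!$, the Pochhammer symbols $(n)_{|\bk|+s}$, and the factor $\Gamma(|\bk|+s+n)$ combine — and matching it cleanly against the $c_{\ell,p}/c_{|\bk|+\ell,p}$ ratio so that the Gamma functions telescope into the claimed $1/(\ell!\,\Gamma(|\bk|+\ell+n+p))$. I expect no conceptual difficulty beyond careful use of Lemma \ref{product in Enp} (equivalently the sphere-integration lemma it cites) and the elementary identity $\frac{c_{\ell,p}}{c_{|\bk|+\ell,p}}=\sqrt{\frac{(n)_\ell(n+p)_{|\bk|+\ell}}{(n+p)_\ell(n)_{|\bk|+\ell}}}$.
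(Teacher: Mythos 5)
Your proposal is correct and follows essentially the same route as the paper: drop the projections to reduce $\B(\To_{\vecx^\bk})(\vecz,\vecw)$ to $\langle \vecx^\bk\K(\cdot,\vecz),\K(\cdot,\vecw)\rangle_{\Sn}/\langle\K(\cdot,\vecz),\K(\cdot,\vecw)\rangle_{\Sn}$, expand the coherent states, apply dominated convergence and Lemma \ref{product in Sn} (with $\bm=0$) to collapse to the diagonal $\ell=|\bk|+s$, and divide by Eq.~(\ref{inner product coherent states}). The combinatorial bookkeeping you flag does close up as expected, via $c_{s,p}c_{|\bk|+s,p}=\frac{c_{s,p}}{c_{|\bk|+s,p}}\cdot\frac{\Gamma(n+|\bk|+s)\Gamma(n+p)}{\Gamma(n)\Gamma(n+p+|\bk|+s)}$, yielding exactly the stated series.
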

\begin{proof} From Eq. (\ref{extended covariant symbol}) and properties the orthogonal projection
\begin{align*}
\B(\To_{\vecx^\bk})(\vecz,\vecw) = \frac{\langle \vecx^{\bk}\K(\cdot,\vecz),\K(\cdot,\vecz)\rangle_{{\Sn}}}{\langle\K(\cdot,\vecz),\K(\cdot,\vecz)\rangle_{{\Sn}}}\;.
\end{align*}
Using the dominated convergence theorem, the Lemma \ref{product in Sn} and  Eq. (\ref{inner product coherent states}) we conclude the proof of this theorem.
\end{proof}

\begin{remark}
In the particular case when $p=0$, we obtain from the last theorem
\begin{align*}
\mathfrak B_{\hbar,0}(\To_{\vecx^\bk})(\vecz,\vecw) & = \left(\frac{\vecw}{\sqrt{\vecw\cdot\vecz}}\right)^\bk \frac{\mathrm I_{n+|\bk|-1}\left(2\sqrt{\vecw\cdot\vecz}/\hbar\right)}{\mathrm I_{n-1}\left(2\sqrt{\vecw\cdot\vecz}/\hbar\right)}.
\end{align*}
\end{remark}


\begin{theorem}\label{theorem asymptotic Berezin transform}
Let $p=0$, for any $\vecz \ne 0$ and $\Phi$ a smooth function on $\Sn$, the Berezin symbol $\mathfrak B_{\hbar,0}$ associated to the Toeplitz operator $\To_\Phi$ has the asymptotic expansion
\begin{align}
\mathfrak B_{\hbar,0}\To_\Phi(\vecz)& =\Phi(\vecz/|\vecz|)+\mathrm O(\hbar)\;.
\label{asymptotic Berezin transform}
\end{align}
\end{theorem}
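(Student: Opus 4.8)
The strategy is to reduce the general smooth $\Phi$ to the monomial case already computed and then extract the leading term of the Bessel-function ratio. First I would write $\Phi$ in terms of the spherical harmonics, i.e. expand $\Phi = \sum_{\bk,\bm} a_{\bk,\bm}\,\vecx^\bk\overline\vecx^\bm$ on $\Sn$ (using that products of the monomials $\vecx^\bk\overline\vecx^\bm$ span a dense subalgebra of $C^\infty(\Sn)$), so that by linearity of $\mathfrak B_{\hbar,0}$ it suffices to understand $\mathfrak B_{\hbar,0}(\To_{\vecx^\bk\overline\vecx^\bm})(\vecz)$. Exactly as in the preceding theorem, but now keeping both holomorphic and antiholomorphic factors, one computes from Eq. (\ref{extended covariant symbol}), the dominated convergence theorem and Lemma \ref{product in Sn} that on the diagonal
\begin{equation*}
\mathfrak B_{\hbar,0}(\To_{\vecx^\bk\overline\vecx^\bm})(\vecz)=\left(\frac{\vecz}{|\vecz|}\right)^\bk\overline{\left(\frac{\vecz}{|\vecz|}\right)^\bm}\;R_{\bk,\bm}\!\left(\frac{2|\vecz|}{\hbar}\right),
\end{equation*}
where $R_{\bk,\bm}$ is an explicit ratio of sums of modified Bessel functions $\mathrm I_{n+j-1}$ divided by $\mathrm I_{n-1}$, with $j$ ranging over the values $|\bk|,|\bm|$ and combinations thereof.

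\textbf{Key step: the Bessel ratio tends to $1$.} The heart of the matter is to show $R_{\bk,\bm}(2|\vecz|/\hbar)\to 1$ as $\hbar\to 0$, with error $\mathrm O(\hbar)$. For this I would invoke the asymptotic expansion in Eq. (\ref{asymptotic expression modified Bessel function}): for fixed real $\vartheta$ and $\omega\to\infty$ one has $\mathrm I_\vartheta(\omega)=\frac{\mathrm e^\omega}{\sqrt{2\pi\omega}}\,[1+\mathrm O(1/\omega)]$, where crucially the leading factor $\mathrm e^\omega/\sqrt{2\pi\omega}$ does \emph{not} depend on the order $\vartheta$. Hence for any two orders $\vartheta_1,\vartheta_2$ the ratio $\mathrm I_{\vartheta_1}(\omega)/\mathrm I_{\vartheta_2}(\omega)\to 1$ with an error that is $\mathrm O(1/\omega)=\mathrm O(\hbar/|\vecz|)$ --- this is precisely why the hypothesis $\vecz\ne 0$ is needed. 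One then checks that in the finite sums defining numerator and denominator of $R_{\bk,\bm}$ every term is, after dividing through by the $\ell=0$ term, $\mathrm e^{-\text{(positive)}\cdot 2\sqrt{\vecw\cdot\vecz}/\hbar}$ times a bounded coefficient, i.e. exponentially small; so numerator and denominator are each $\mathrm I_{n+j-1}(2|\vecz|/\hbar)\,[1+\mathrm O(\hbar^\infty)]$ for the appropriate leading order $j$, and the ratio is $1+\mathrm O(\hbar)$. Feeding this back, $\mathfrak B_{\hbar,0}(\To_{\vecx^\bk\overline\vecx^\bm})(\vecz)=(\vecz/|\vecz|)^\bk\overline{(\vecz/|\vecz|)^\bm}+\mathrm O(\hbar)=\vecx^\bk\overline\vecx^\bm\big|_{\vecx=\vecz/|\vecz|}+\mathrm O(\hbar)$, which is the claimed leading symbol for monomials.

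\textbf{Assembling the general case and the main obstacle.} Finally one sums over the expansion of $\Phi$. The delicate point --- and the part I expect to be the real obstacle --- is controlling the error uniformly so that the sum $\sum_{\bk,\bm}a_{\bk,\bm}\mathrm O(\hbar)$ really is $\mathrm O(\hbar)$ and not merely $o(1)$ term-by-term: one must bound the $\hbar^1$-coefficient in the Bessel-ratio expansion by something summable against the rapidly decaying coefficients $a_{\bk,\bm}$ of the smooth function $\Phi$. Since $\Phi\in C^\infty(\Sn)$ its harmonic coefficients decay faster than any polynomial in $|\bk|+|\bm|$, while the $\mathrm O(1/\omega)$-constants in Eq. (\ref{asymptotic expression modified Bessel function}) grow only polynomially in the orders $n+|\bk|-1$ (from the $\Gamma(\vartheta+k+\tfrac12)/\Gamma(\vartheta-k+\tfrac12)$ factors); one then combines these estimates with the boundedness of $\To_\Phi$ and the continuity of the coherent states to absorb the tail. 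Alternatively, a cleaner route for this last step is to use the integral formula (\ref{Eq 4}) for $\U\To_\Phi(\U)^{-1}$ together with the norm estimate (\ref{norm coherent states}), performing a stationary-phase / Laplace analysis of the resulting integral over $\mathbb C^n$ whose phase $2(\sqrt{\vecz\cdot\vecu}+\sqrt{\vecu\cdot\vecz}-|\vecu|)$ is maximized exactly on the ray $\vecu=t\vecz/|\vecz|$, $t>0$, giving $\Phi(\vecz/|\vecz|)$ as the leading contribution and $\mathrm O(\hbar)$ from the next order; I would present the spherical-harmonic argument as the main line and remark on the stationary-phase alternative.
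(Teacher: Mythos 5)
Your main line is a genuinely different route from the paper's. The paper does not expand $\Phi$ at all: it observes that for $p=0$ the coherent state is simply $\K(\vecx,\vecz)=e^{\vecx\cdot\vecz/\hbar}$, writes $\mathfrak B_{\hbar,0}\To_\Phi(\vecz)$ via the norm estimate (\ref{norm coherent states}) as a single integral over $\Sn$ with phase $\tfrac{2}{\hbar}\bigl[\Re(\vecx\cdot\vecz)-|\vecz|\bigr]$, identifies $\Sn$ with $S^{2n-1}\subset\mathbb R^{2n}$, rotates $\vecz$ onto a coordinate axis, and applies stationary phase in spherical coordinates; the unique critical point sits at $\vecx=\vecz/|\vecz|$ and the Hessian is nondegenerate, which gives (\ref{asymptotic Berezin transform}) in one stroke for arbitrary smooth $\Phi$. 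Your ``alternative'' closing remark is therefore essentially the paper's actual proof (carried out on the sphere rather than on $\mathbb C^n$ through Eq. (\ref{Eq 4})), and it is the cleaner of your two options precisely because it avoids any summation over harmonics.

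The monomial route you present as the main line is sound term by term --- for fixed $\bk,\bm$ the computation via Lemma \ref{product in Sn} does produce $(\vecz/|\vecz|)^{\bk}\overline{(\vecz/|\vecz|)^{\bm}}$ plus $\mathrm O(\hbar)$ (though the correction is not of the pure product form $(\vecz/|\vecz|)^{\bk}\overline{(\vecz/|\vecz|)^{\bm}}\cdot R_{\bk,\bm}$ you write: the sums coming from Lemma \ref{product in Sn} contain lower-order pieces not proportional to the monomial; this is harmless at leading order). But the assembly step is a genuine gap, not just a delicate point. The expansion (\ref{asymptotic expression modified Bessel function}) for $\mathrm I_{\vartheta}(\omega)$ is an asymptotic expansion for $\omega\to\infty$ at \emph{fixed} order $\vartheta$; its error is controlled only in the regime $\omega\gg\vartheta^2$, so for any fixed $\hbar$ it simply does not apply to the terms of the harmonic expansion with $|\bk|+|\bm|\gtrsim\sqrt{|\vecz|/\hbar}$. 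To close the argument you would have to split the sum, use uniform-in-order Bessel bounds (or the trivial bound $\|\To_{\vecx^{\bk}\overline\vecx^{\bm}}\|\le 1$) on the tail against the rapid decay of the coefficients, and quantify the $\mathrm O(\hbar)$ constants polynomially in the degree on the head. None of this is written, and the phrase ``absorb the tail'' is doing all the work. As it stands the argument proves pointwise convergence of each term but not the stated $\mathrm O(\hbar)$ bound for general smooth $\Phi$; either supply the uniform estimates or promote the stationary-phase computation (which is what the paper does, via Eqs. (\ref{Berezin approximation 1})--(\ref{auxiliar mathcal A})) to the main argument.
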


\begin{proof}
First we observe from Eq. (\ref{coherent states}), since $p=0$, that $\K(\vecx,\vecz)=e^{\vecx\cdot\vecz/\hbar}$. Let $\Phi\in C^\infty(\Sn)$, from Eq. (\ref{norm coherent states}) we have
\begin{align}
\mathfrak B_{\hbar,0} \To_\Phi (\vecz) & =\frac{2\sqrt \pi}{\Gamma(n)} \left(\frac{|\vecz|}{\hbar}\right)^{n-\frac{1}{2}}\int_{\Sn} \mathrm{exp}\left(\frac{2}{\hbar} \big[\Re (\vecx\cdot\vecz)-|\vecz|\big]\right) \Phi(\vecx)\nonumber\\[0.5cm] 
& \hspace{3cm}\left(1+ \mathrm{O}(\hbar)\right) \ds(\vecx)\;.\label{Berezin approximation 1}
\end{align}

We identify $\mathbb R^{2n}$ with $\mathbb C^n$ in the usual way: for $(x_1,\ldots,x_n)\in \Sn$, let $\vecy=(y_1,\ldots,y_{2n})$ with $x_j=y_{j}+\imath y_{n+j}$. Note that $\vecy \in S^{2n-1}=\{\mathbf a \in \mathbb R^{2n}\;|\; |\mathbf a|=1\}$. Then,  the argument of the exponential function in Eq. (\ref{Berezin approximation 1}) is 
\begin{displaymath}
\frac{2}{\hbar}\;\big[\vecy \cdot(\Re(\vecz),\Im(\vecz))-|\vecz|\big]\;.
\end{displaymath}

In order to estimate (\ref{asymptotic Berezin transform}), we define
\begin{align}
\mathcal A &: = \left(\frac{|\vecz|}{\pi\hbar}\right)^{n-\frac{1}{2}} \int\limits_{S^{2n-1}}  \Psi(\vecy)\mathrm{exp}\left(\frac{2}{\hbar} \big[\vecy\cdot(\Re(\vecz),\Im(\vecz))-|\vecz|\big]\right) \mathrm d\Omega(\vecy)\label{Auxiliar A}
\end{align}
where $\mathrm d\Omega$ is the surface measure on $S^{2n-1}$ and $\Psi$ is a smooth function on $S^{2n-1}$.

Note that given $(\Re(\vecz),\Im(\vecz))\in \mathbb R^{2n}$, there exist a rotation $R$ in $\mathrm {SO}(2n)$ such that $(\Re(\vecz),\Im(\vecz))= rR \hat{\mathbf e}_1 $ with $r=|\vecz|$ and $\hat{\mathbf e}_1$ is canonical unit vector in $\mathbb R^{2n}$. Thus we have
\begin{align}
\mathcal A& = \left(\frac{r}{\pi\hbar}\right)^{n-\frac{1}{2}} \int\limits_{\boldsymbol \omega\in S^{2n-1}}  \Psi(R\boldsymbol \omega) \mathrm{exp}\left(\frac{\imath}{\hbar}f_{\vecz}(\boldsymbol \omega)\right)  \mathrm d\Omega(\boldsymbol \omega) \label{Auxiliar A-1}
\end{align}
where $f_{\vecz}(\boldsymbol \omega)=-2\imath r\big[\omega_1-1 \big]$.

Let us introduce spherical coordinates for the variables $(\omega_1,\ldots, \omega_{2n})\in S^{2n-1}$:
\begin{align*}
\omega_1 & = \sin(\theta_{2n-1}) \cdots \sin(\theta_2)\sin(\theta_1)\;,\\
\omega_2 & = \sin(\theta_{2n-1}) \cdots \sin(\theta_2)\cos(\theta_1)\;,\\
&\hspace{0.2cm}\vdots\\
\omega_{2n-1}&=\sin(\theta_{2n-1})\cos(\theta_{2n-2})\;,\\
\omega_{2n}&= \cos(\theta_{2n-1})
\end{align*}
with $\theta_1\in(-\pi,\pi)$, $\theta_2,\theta_3,\cdots,\theta_{2n-1}\in(0,\pi)$.  

The function $f_\vecz$ appearing in the argument of the exponential function in Eq. (\ref{Auxiliar A-1}) has a non-negative imaginary part and has only one critical point  (as a function of the angles) $\boldsymbol \theta_0$ which contributes to the asymptotic given by $\theta_j=\pi/2$, $j=1,\ldots,2n-1$. In addition, since
\begin{equation*}
\left.\frac{\partial^2 f_\vecz}{\partial \theta_\ell\partial \theta_j}\right|_{\vtheta=\boldsymbol \theta_0}=2\imath r \delta_{j\ell}\;,
\end{equation*}
with $\delta_{ij}$ denoting the Kronecker symbol, then the determinate of the Hessian matrix of the function $f_\vecz$ evaluated at the critical point is equal to $(2\imath r)^{2n-1}$.

From the stationary phase method (see Ref. \cite{H90}) we obtain
\begin{align}
\mathcal A & = \left[\Psi(R\boldsymbol \omega(\vtheta))\right]_{\vtheta=\vtheta_0}+  \mathrm O(\hbar)=\Psi((\Re(\vecz),\Im(\vecz))/|\vecz|)+\mathrm O(\hbar)\;.\label{auxiliar mathcal A}
\end{align}

For $\vecx \in \Sn$, we consider $\Psi(\Re(\vecx),\Im(\vecx))=\Phi(\vecx)$. Then from Eqs. (\ref{Berezin approximation 1}), (\ref{Auxiliar A}) and (\ref{auxiliar mathcal A}) we conclude the proof of this theorem.
\end{proof}

When $p = -1$ we can give an asymptotic expression of the coherent states (see Appendix \ref{AppendixB}). This result will give us as a consequence an asymptotic expression for Berezin symbol $\mathfrak B_{\hbar,-1}$ to the Toeplitz operator. 
\begin{theorem}\label{theorem asymptotic Berezin transform p=1}
Let $p=-1$, for any $\vecz \ne 0$ and $\Phi$ a smooth function on $\Sn$, the Berezin transform $\mathfrak B_{\hbar,-1}$ associated to the Toeplitz operator $\To_\Phi$ has the asymptotic expansion
\begin{align}
\mathfrak B_{\hbar,-1}\To_\Phi(\vecz)& =\Phi(\vecz/|\vecz|)+\mathrm O(\hbar)\;.\label{asymptotic Berezin transform p=1}
\end{align}
\end{theorem}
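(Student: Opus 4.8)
\textbf{Proof plan for Theorem \ref{theorem asymptotic Berezin transform p=1}.}
The plan is to mimic the argument of Theorem \ref{theorem asymptotic Berezin transform}, replacing the exact Gaussian-type coherent state $\K(\vecx,\vecz)=\mathrm e^{\vecx\cdot\vecz/\hbar}$ (available only at $p=0$) by the asymptotic expression of the coherent states for $p=-1$ furnished by Appendix \ref{AppendixB}. First I would write out, from Eq. (\ref{definition Berezin transform}) and the definition of the Toeplitz operator together with the properties of the orthogonal projection $\Pro$, the identity
\begin{displaymath}
\mathfrak B_{\hbar,-1}\To_\Phi(\vecz)=\frac{\langle \Phi\,\K(\cdot,\vecz),\K(\cdot,\vecz)\rangle_{\Sn}}{\langle\K(\cdot,\vecz),\K(\cdot,\vecz)\rangle_{\Sn}}=\frac{1}{\|\K(\cdot,\vecz)\|_{\Sn}^2}\int_{\Sn}\Phi(\vecx)\,|\K(\vecx,\vecz)|^2\,\ds(\vecx)\,.
\end{displaymath}
Then I would substitute the $p=-1$ asymptotic form of $\K(\vecx,\vecz)$ from Appendix \ref{AppendixB} for $\hbar\to 0$ into $|\K(\vecx,\vecz)|^2$, and for the denominator use Eq. (\ref{norm coherent states}) with $p=-1$, which gives $\|\K(\cdot,\vecz)\|_{\Sn}^2=\tfrac{\Gamma(n-1)}{2\sqrt\pi}(|\vecz|/\hbar)^{-n+\frac32}\mathrm e^{2|\vecz|/\hbar}[1+\mathrm O(\hbar)]$. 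The exponential growth in $\hbar$ coming from $|\K(\vecx,\vecz)|^2$ is $\mathrm e^{2\Re(\vecx\cdot\vecz)/\hbar}$, exactly as in the $p=0$ case, so after cancelling the normalization one is left with an integral over $\Sn$ of the same shape as Eq. (\ref{Berezin approximation 1}), namely a prefactor of order $(|\vecz|/\hbar)^{n-\frac12}$ times $\int_{\Sn}\mathrm e^{\frac{2}{\hbar}[\Re(\vecx\cdot\vecz)-|\vecz|]}\Phi(\vecx)[1+\mathrm O(\hbar)]\ds(\vecx)$, up to a multiplicative constant that is $1$ to leading order because both the numerator's and the denominator's $\hbar$-dependent constants match at leading order.

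At this point the argument becomes identical to the one already carried out: I would identify $\mathbb C^n$ with $\mathbb R^{2n}$, pass to the sphere $S^{2n-1}$, rotate so that $(\Re(\vecz),\Im(\vecz))=rR\hat{\mathbf e}_1$, introduce the same spherical coordinates, and apply the stationary phase method of Ref. \cite{H90}. The phase function $f_\vecz(\boldsymbol\omega)=-2\imath r[\omega_1-1]$ has non-negative imaginary part, a single contributing critical point at $\theta_j=\pi/2$, and Hessian determinant $(2\imath r)^{2n-1}$, so the stationary phase formula reproduces $\mathcal A=\Phi(\vecz/|\vecz|)+\mathrm O(\hbar)$ exactly as in Eq. (\ref{auxiliar mathcal A}). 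Combining this with the leading-order cancellation of constants in numerator and denominator yields Eq. (\ref{asymptotic Berezin transform p=1}).

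The main obstacle I anticipate is not the stationary phase step — which is verbatim the same as for $p=0$ — but rather controlling the error terms introduced by using an \emph{asymptotic} rather than exact formula for $\K(\vecx,\vecz)$ when $p=-1$. Concretely, one must check that the $\mathrm O(\hbar)$ (or $\mathrm O(\hbar^\infty)$, depending on what Appendix \ref{AppendixB} provides) remainder in the coherent state, once squared and integrated against $\Phi$ over $\Sn$ with the large prefactor $(|\vecz|/\hbar)^{n-\frac12}$ and the exponential weight, still contributes only $\mathrm O(\hbar)$ to the final Berezin symbol; this requires that the remainder be uniform in $\vecx\in\Sn$ and that its interaction with the Laplace-type exponential concentration at the critical point does not amplify its order. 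I would handle this by invoking the uniformity of the expansion in Appendix \ref{AppendixB} and the exponential decay of $\mathrm e^{\frac{2}{\hbar}[\Re(\vecx\cdot\vecz)-|\vecz|]}$ away from the maximum $\vecx=\vecz/|\vecz|$, which confines all integrals to a shrinking neighbourhood of the critical point where the remainder estimates are controlled. A secondary bookkeeping point is verifying that the constant $\Gamma(n-1)$ appearing at $p=-1$ (which requires $n\ge 2$ for $\Gamma(n-1)$ to be finite and positive, consistent with the standing hypothesis $n\ge 2$ and $p>-n$) cancels correctly between numerator and denominator to leading order.
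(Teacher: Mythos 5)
Your overall strategy is the right one and matches the paper's: substitute the Appendix~\ref{AppendixB} asymptotics of $\Ku(\vecx,\vecz)$ into the ratio defining $\B(\To_\Phi)$, divide by the norm estimate (\ref{norm coherent states}), and run the same stationary phase argument as in the $p=0$ case (with the amplitude now carrying the extra non-constant factor $|\vecx\cdot\vecz|/|\vecz|$, which equals $1$ at the critical point $\vecx=\vecz/|\vecz|$ — this is not merely ``a multiplicative constant that is $1$ to leading order,'' but it is harmless). However, there is a genuine gap at the step you flag as your ``main obstacle.'' The expansion of Proposition~\ref{asymptotic coherent states p=1} is only valid in the sector $|\Im(\vecx\cdot\vecz)|\le C\,\Re(\vecx\cdot\vecz)$ with $\Re(\vecx\cdot\vecz)/\hbar\to+\infty$; it is \emph{not} uniform over all of $\Sn$, and indeed it says nothing at all about points $\vecx$ with $\Re(\vecx\cdot\vecz)\le 0$ or small. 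So ``invoking the uniformity of the expansion in Appendix~\ref{AppendixB}'' is not available on that part of the sphere, and the crude termwise bound $|\Ku(\vecx,\vecz)|\le g(|\vecz|/\hbar)\sim C(|\vecz|/\hbar)^{1/2}e^{|\vecz|/\hbar}$ only yields a contribution of order $(\hbar/|\vecz|)^{n-3/2}$ after normalization, which for $n=2$ is $\mathrm O(\hbar^{1/2})$ and does not fit inside the claimed $\mathrm O(\hbar)$ error.

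The paper closes exactly this gap by splitting $\Sn$ into $W=\{\vecx:\,C\,\Re(\vecx\cdot\vecz)/|\vecz|\ge 1\}$ and $V=\Sn\setminus W$, using the asymptotics only on $W$, and controlling the integral over $V$ by a separate device: the exact integral representation of $\Ku(\vecx,\vecz)$ from Lemma 10.1 of Ref.~\cite{D-V09}, which gives the pointwise bound
\begin{equation*}
|\Ku(\vecx,\vecz)|\le C_1\,e^{(1+\mu)|\vecz|/\hbar}\Bigl[\tfrac{|\vecz|}{(n-1)\hbar}+1\Bigr],\qquad \mu=\tfrac{1}{C}-1<0,\ \vecx\in V,
\end{equation*}
hence an exponentially small (indeed $\mathrm O(\hbar^\infty)$) contribution from $V$ after dividing by $\|\Ku(\cdot,\vecz)\|_{\Sn}^2$. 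Your proposal needs this (or an equivalent independent bound on $|\Ku(\vecx,\vecz)|$ off the sector of validity of the asymptotic expansion) to be complete; the heuristic that ``the exponential weight concentrates near the critical point'' cannot be applied before one knows that $|\Ku(\vecx,\vecz)|^2$ is actually dominated by $e^{2\Re(\vecx\cdot\vecz)/\hbar}$ times a polynomial on the bad region.
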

\begin{proof}
Let us define the following regions on $\Sn$, with the constant $C$ mentioned in the Lemma \ref{asymptotic p=1} taken greater than one
\begin{equation}
W=\left\{\vecx \in \Sn\;|\; C\frac{\Re(\vecx\cdot\vecz)}{|\vecz|}\ge 1\right\}\;\;\mbox{and}\;\; V=\Sn-W\;.
\end{equation}
Note that $\vecx \in W$, implies $|\Im(\vecx\cdot\vecz)|\le |\vecz|\le C\Re(\vecx\cdot\vecz)$ and therefore we can use the asymptotic expression of coherent states (see proposition \ref{asymptotic coherent states p=1}).

Let 
\begin{equation}
A=\int\limits_{\vecx\in W} \Phi(\vecx) \frac{|\Ku (\vecx,\vecz)|^2}{||\Ku(\cdot,\vecz)||^2_{\Sn}} \ds(\vecx)\; ,\hspace{0.2cm} B=\int\limits_{\vecx\in V} \Phi(\vecx) \frac{|\Ku (\vecx,\vecz)|^2}{||\Ku(\cdot,\vecz)||^2_{\Sn}} \ds(\vecx).
\end{equation}
We affirm that the integral on V is actually $\mathrm O(h^\infty)$. To try this, from Lemma 10.1 of Ref \cite{D-V09} (specifically Eq. 10.4), the function $\Ku(\cdot,\vecz)$ has a integral expression given by
\begin{align}
\Ku(\vecx,\vecz) & = \frac{2\sqrt{n-1}}{\pi}e^{\vecx\cdot\vecz/\hbar}\int_0^1\left[\frac{\vecx\cdot\vecz}{(n-1)\hbar}(1-\omega^2)+1\right]e^{-\omega^2\vecx\cdot\vecz/\hbar}\nonumber\\
& \hspace{2cm} \frac{(1-\omega^2)^{n-2}\omega}{\sqrt{-\ln(1-\omega^2)}}\;\mathrm d\omega.\nonumber
\end{align}
From the last equation we obtain
\begin{align*}
|\Ku(\vecx,\vecz)| & \le C_1e^{\frac{|\vecz|}{\hbar}}\int\limits_0^1\left[\frac{|\vecz|}{(n-1)\hbar}+1\right]\mathrm {exp}\left(\frac{|\vecz|}{\hbar}\left[\frac{\Re(\vecx\cdot\vecz)}{|\vecz|}(1-\omega^2)-1\right]\right)\;\mathrm d\omega\nonumber
\end{align*}
for some constant $C_1$. Note that, for $\vecx \in V$
\begin{equation*}
\frac{\Re(\vecx\cdot\vecz)}{|\vecz|}(1-\omega^2)-1\le \mu \;,\mbox{ with } \mu=\frac{1}{C}-1<0\;.
\end{equation*}
Thus we get the estimate for $\vecx\in V$
\begin{align*}
|\Ku(\vecx,\vecz)| & \le C_1e^{\frac{|\vecz|}{\hbar}}e^{\mu\frac{|\vecz|}{\hbar}}\left[\frac{|\vecz|}{(n-1)\hbar}+1\right].
\end{align*}
From the norm estimate for the coherent states in Eq. (\ref{norm coherent states}) we obtain
\begin{equation*}
\frac{|\Ku(\vecx,\vecz)|^2}{||\Ku(\cdot,\vecz)||^2_{\Sn}} \le C_1 \left(\frac{|\vecz|}{\hbar}\right)^{n+\frac{3}{2}} e^{2\mu\frac{|\vecz|}{\hbar}}\left[\frac{|\vecz|}{(n-1)\hbar}+1\right]^2(1+\mathrm O(\hbar))\;.
\end{equation*}
Hence $B=\mathrm{O}(\hbar^\infty)$. On the other hand, from proposition \ref{asymptotic coherent states p=1} and Eq. (\ref{norm coherent states}) we have
\begin{align}
A & =\frac{2\sqrt \pi}{\Gamma(n)} \left(\frac{|\vecz|}{\hbar}\right)^{n-\frac{1}{2}}\int\limits_W  \frac{|\vecx\cdot\vecz|}{|\vecz|}\Phi(\vecx) \mathrm{exp}\left(\frac{2|\vecz|}{\hbar}\left[\frac{\Re(\vecx\cdot\vecz)}{|\vecz|}-1\right]\right)\nonumber\\
&\hspace{2cm}\left[1+\mathrm O(\hbar)\right]\ds\;. \label{A p=1}
\end{align}
Note that, for $\vecx \in V$, $\frac{\Re(\vecx\cdot\vecz)}{|\vecz|}-1<\frac{1}{C}-1<0$.  Thus, we can take the integral defining $A$ in Eq. (\ref{A p=1}) over the whole sphere with error $\mathrm O(\hbar^\infty)$.
From Eq. (\ref{Auxiliar A}), considering 
\begin{equation}
 \Psi(\Re(\vecx),\Im(\vecx))=\Phi(\vecx)\frac{|\vecx\cdot\vecz|}{|\vecz|}\;, \hspace{0.5cm} \vecx \in \Sn
\end{equation} 
 and (\ref{auxiliar mathcal A}) we conclude the proof of theorem.
\end{proof}


\section{The star product}\label{section star product}
In Ref. \cite{B74}, Berezin show that the formula (\ref{Eq 3}) will allow us to define a start product (see Refs. \cite{BF78} for the standard definition of star-product) on  the algebra $\A$ out of Berezin's symbol for bounded operators with domain in $\mathcal O$ (see Eq. (\ref{definition Berezin transform})), i.e.
\begin{displaymath}
\A=\{\B(A)\;|\; A\in \mathbf B (\mathcal O)\}.
\end{displaymath}
In this section we verifies that this noncommutative star-product, which will be denoted by $*_p$, satisfies the usual requirement on the semiclassical limit, i.e. as $\hbar\to 0$
\begin{equation*}
f_1*_p f_2(\vecz)=f_1(\vecz)f_2(\vecz)+\hbar B(f_1,f_2)(\vecz)+\mathrm O(\hbar^2),\hspace{0.5cm}\vecz\in \mathbb C^n, f_1,f_2\in \A, 
\end{equation*}
where $B(\cdot,\cdot)$ is a certain bidifferential operator of first order.

\begin{theorem}\label{theorem asymptotic expansion} Let $n\ge 2$, $\vecz\in \mathbb C^n$, $\beta$ a smooth function defined on $\mathbb C^n$, $p> -n$ and $\mu,\nu \in \mathbb R$ with $\mu,\nu>\frac{1}{2}$. Then for $\hbar \to 0$
\begin{align}
\mathbf I (\vecz) &: = \int_{\mathbb C^n} \beta(\vecw) \frac{\F(\nu,\vecw\cdot\vecz/\hbar^2) \F(\mu,\vecz\cdot\vecw/\hbar^2)}{\F(\mu,|\vecz|^2/\hbar^2)}\dm(\vecw)\nonumber\\
& = \left(\frac{|\vecz|}{\hbar}\right)^{n+p-\nu}\frac{\Gamma(\nu)}{\Gamma(n+p)}
\left\{ \beta(\vecz) + \hbar\left(\frac{1}{4|\vecz|} (p-\nu+1)(p+\nu-1)\beta(\vecz)\right.\right.\nonumber\\ 
&\hspace{1cm}+ \frac{R}{2}\beta(\vecz)+g^{\bar j i}(\vecz)\left[\frac{1}{2|\vecz|^2}\big[\overline z_i(n+p-\nu)\partial_{\bar j} + z_j(n+p-\mu)\partial_{i}\big]\beta(\vecw)\right.\nonumber\\
& \hspace{1cm}\left. \left.\left.+\partial_{i}\partial_{\bar j} \beta(\vecw)+\frac{\beta(\vecz)}{|\vecz|^{n+p-\nu}} \partial_{\bar j}\partial_i \left(\frac{\xi_\vecz^{\nu,\mu}}{g}(\vecw)\right)\right]_{\vecw=\vecz} \right)\right\} + \mathrm O(\hbar^2) \label{asymptotic expansion} 
\end{align}
 with $\F$ denoting the generalized hypergeometric function (see Sec.9.14 of Ref. \cite{L65} for definition and expressions for this special function),  $\partial_i=\partial/\partial w_i$ and $\partial_{\bar j}=\partial/\partial \overline w_j$. Further, where $R$ is the scalar curvature defined by the K\"ahler metric $\mathrm d s^2 =g_{i  \bar j} \mathrm d \overline z_j \wedge \mathrm d z_i$, and $g_{i\bar j}, g^{i\bar j}, \xi_\vecz^{\nu,\mu}$, $g$ are functions described subsequently.
\end{theorem}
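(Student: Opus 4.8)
The plan is to treat $\mathbf I(\vecz)$ as a Laplace-type integral and extract the asymptotics by the stationary phase / Laplace method in a neighborhood of the diagonal point $\vecw=\vecz$, following the strategy already used in the proof of Theorem \ref{theorem asymptotic Berezin transform}. First I would insert the asymptotic expansions of the three hypergeometric factors. Since ${}_0\mathrm F_1(\nu;\zeta)$ is, up to a power prefactor, the modified Bessel function $\mathrm I_{\nu-1}(2\sqrt\zeta)$, I can reuse the expansion in Eq. (\ref{asymptotic expression modified Bessel function}): writing $\zeta_\vecw=\vecw\cdot\vecz/\hbar^2$ and $\bar\zeta_\vecw=\vecz\cdot\vecw/\hbar^2$, each factor contributes an exponential $\mathrm e^{2\sqrt{\zeta_\vecw}}$ or $\mathrm e^{2\sqrt{\bar\zeta_\vecw}}$ times a power of $\zeta_\vecw$ times a $1+\mathrm O(\hbar)$ correction whose first term involves the coefficients $(\mu-\tfrac12)(\ldots)$, $(\nu-\tfrac12)(\ldots)$. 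Combining these with the explicit density in $\dm$ from Eq. (\ref{measure on U}) — which contributes $|\vecw|^p\mathrm K_p(2|\vecw|/\hbar)$, itself asymptotic to $\sqrt{\pi\hbar/(4|\vecw|)}\,\mathrm e^{-2|\vecw|/\hbar}(1+\mathrm O(\hbar))$ — the integrand takes the form (smooth amplitude)$\times\exp\big(\tfrac{2}{\hbar}\Phi_\vecz(\vecw)\big)$ with phase
\begin{displaymath}
\Phi_\vecz(\vecw)=\sqrt{\vecw\cdot\vecz}+\sqrt{\vecz\cdot\vecw}-|\vecw|-|\vecz|,
\end{displaymath}
which is $\le 0$, vanishes exactly at $\vecw=\vecz$, and there has nondegenerate (complex) Hessian. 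This is the analogue of the phase $\Re(\vecx\cdot\vecz)-|\vecz|$ appearing in Eq. (\ref{Berezin approximation 1}).

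The second step is the local analysis at the critical point $\vecw=\vecz$. I would introduce the Kähler metric $g_{i\bar j}=\partial_i\partial_{\bar j}K$ associated with the Kähler potential $K(\vecw)=2|\vecw|=2\sqrt{\vecw\cdot\vecw}$ (so that $\Phi_\vecz$ is, up to first order, Calabi's diastasis-type function $K(\vecw)+K(\vecz)-$ the bilinear polarizations), compute $g^{\bar j i}$, the Laplace–Beltrami operator, and the scalar curvature $R$ — these are exactly the functions "$g_{i\bar j},g^{i\bar j},\xi_\vecz^{\nu,\mu},g$ described subsequently" in the statement, so I may quote their definitions from the text that follows. Expanding $\beta(\vecw)$ and all amplitude factors to second order in $\vecw-\vecz$ and performing the resulting Gaussian integral (stationary phase with a quadratic phase, see Ref. \cite{H90}) produces the leading term $\beta(\vecz)$ times the prefactor $(|\vecz|/\hbar)^{n+p-\nu}\Gamma(\nu)/\Gamma(n+p)$, where the exponent $n+p-\nu$ comes from balancing the powers of $\hbar$ in the Bessel prefactors against the $\hbar^n$ in $\dm$ and the $\hbar^{1/2}$'s from the $2n-1$-dimensional stationary-phase normalization. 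The $\hbar^1$ term collects four kinds of contributions: (i) the subleading Bessel corrections, giving the $\tfrac{1}{4|\vecz|}(p-\nu+1)(p+\nu-1)\beta(\vecz)$ piece; (ii) the curvature term $\tfrac R2\beta(\vecz)$ coming from the non-Euclidean volume element in the Gaussian integration; (iii) the first-order transport terms $g^{\bar j i}\,\tfrac{1}{2|\vecz|^2}[\,\overline z_i(n+p-\nu)\partial_{\bar j}+z_j(n+p-\mu)\partial_i\,]\beta$ from differentiating the power-law amplitudes; and (iv) the pure second derivative $g^{\bar j i}\partial_i\partial_{\bar j}\beta$ plus the term involving $\partial_{\bar j}\partial_i(\xi_\vecz^{\nu,\mu}/g)$ coming from the non-quadratic part of the phase and the amplitude's dependence on $\vecw$.

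The main obstacle will be bookkeeping in step two: organizing the many second-order Taylor contributions (from $\beta$, from the two Bessel amplitudes, from $\mathrm K_p$, and from the cubic part of the phase $\Phi_\vecz$) so that they assemble precisely into the covariant expression in Eq. (\ref{asymptotic expansion}) with the scalar curvature $R$ appearing in the stated form. To control this I would work in Kähler normal coordinates centered at $\vecz$, in which $g_{i\bar j}=\delta_{ij}+\mathrm O(|\vecw-\vecz|^2)$ and the curvature enters only through the fourth-order term of $K$; this is the standard device (Karabegov–Schlichenmaier–Engliš-type computations for Berezin–Toeplitz star products) that turns the messy Laplace expansion into the manifestly covariant answer. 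A secondary technical point, already anticipated in the remark after Proposition \ref{proposition inner product coherent states}, is that the asymptotic expansion in Eq. (\ref{asymptotic expression modified Bessel function}) is valid only for $|\mathrm{Arg}|<\pi/2$; since near the diagonal $\vecw\cdot\vecz$ is close to the positive real axis this is not an issue, and the contribution of the region away from $\vecw=\vecz$ — where $\Phi_\vecz(\vecw)$ is bounded away from $0$ — is $\mathrm O(\hbar^\infty)$ by the same cutoff argument used for the integral "$B$" in the proof of Theorem \ref{theorem asymptotic Berezin transform p=1}, so the whole analysis legitimately localizes.
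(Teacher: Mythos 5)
Your proposal is correct and follows essentially the same route as the paper: an $\mathrm O(\hbar^\infty)$ cutoff away from the diagonal, substitution of the Bessel asymptotics to produce a Laplace integral with phase $2(\sqrt{\vecz\cdot\vecw}+\sqrt{\vecw\cdot\vecz}-|\vecw|-|\vecz|)$, and a second-order Laplace expansion governed by the K\"ahler potential $2|\vecw|$. The only cosmetic difference is that the paper quotes Theorem 3 of Engli\v{s} \cite{E00} for the curvature-corrected expansion of the leading amplitude (and treats the subleading Bessel corrections by ordinary stationary phase), rather than re-deriving it in K\"ahler normal coordinates as you suggest.
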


\begin{proof} 
Let us define the following two regions on $\mathbb C^n$
\begin{equation}
W=\left\{\vecw \in \mathbb C^n\;|\; \frac{\Re\sqrt{\vecw\cdot\vecz}}{|\vecz|}\ge \frac{1}{4}\right\}\;\;\mbox{and}\;\; V=\mathbb C^n-W\;.
\end{equation}
The integral in Eq. (\ref{asymptotic expansion}) can be written as an integral on the region $W$ plus an integral on $V$ denoted by the letters $\mathbf J_W$ and $\mathbf J_V$ respectively. The integral on $W$ is the one giving us the main asymptotic and the integral on $V$ is actually $\mathrm O(\hbar^\infty)$. In order to estimate $\mathbf J_V$, from the equality 
\begin{equation}\label{equality bessel = hypergeometric}
\Gamma(\nu +1)\left(\frac{1}{2}z\right)^{-\nu}\mathrm I_\nu(z)=\F(\nu +1,z^2/4)
\end{equation}
 (see formula 9.6.47 of Ref. \cite{A-S72} for details), and the integral representation of the function $\mathrm I_\lambda$ (see formula 8.431-1 of Ref. \cite{G94})
\begin{equation*}
\mathrm I_\lambda(z)=\frac{\left(\frac{z}{2}\right)^\lambda}{\Gamma\left(\lambda+\frac{1}{2}\right)\Gamma\left(\frac{1}{2}\right)}\int_{-1}^1(1-t^2)^{\lambda-\frac{1}{2}}e^{\pm zt}\mathrm dt\;,\hspace{0.5cm} \Re\left(\lambda\right) > -\frac{1}{2}
\end{equation*}
we have
\begin{align*}
\left|\F\left(\nu,\frac{\vecw\cdot\vecz}{\hbar}\right)\right| &  = C \left|\int_{-1}^1 (1-t^2)^{\nu-\frac{3}{2}}e^{\frac{2}{\hbar}\sqrt{\vecw\cdot\vecz}\;t} \mathrm dt\right|\\
& \le C e^{\frac{1}{\hbar}|\vecz|} \int_{-1}^1(1-t^2)^{\nu-\frac{3}{2}}\mathrm{exp}\left({\frac{|\vecz|}{\hbar}\left[2\frac{\Re\sqrt{\vecw\cdot\vecz}}{|\vecz|}-1\right]}\right)\mathrm dt\:.
\end{align*}
for some constant $C$. Note that, for $\vecw \in V$,
\begin{equation}\label{points in V}
2\frac{\Re\sqrt{\vecw\cdot\vecz}}{|\vecz|}-1<-\frac{1}{2}\;.
\end{equation}
Thus we get the estimate for $\vecw\in V$
\begin{align*}
\left|\F\left(\nu,\frac{\vecw\cdot\vecz}{\hbar}\right)\right| & \le C e^{\frac{1}{\hbar}|\vecz|} e^{-\frac{1}{2\hbar}|\vecz|}\int_{-1}^1(1-t^2)^{\nu-\frac{3}{2}}\mathrm dt\\
& = C \mathrm B\left(\frac{1}{2},\nu-\frac{1}{2}\right) e^{\frac{1}{\hbar}|\vecz|} e^{-\frac{1}{2\hbar}|\vecz|}\;,
\end{align*}
where $\mathrm B(x,y)$ denoting the beta function (see 8.38 of Ref. \cite{G94} for definition and expression for this special function). From the equality $\F(\mu,|\vecz|^/\hbar^2)=\Gamma(\mu) |\vecz|^{-\mu+1}\mathrm I_{\mu-1}(2|\vecz|/\hbar)$ (see Eq. (\ref{equality bessel = hypergeometric})) and Eq. (\ref{asymptotic expression modified Bessel function}) we obtain for $\vecw\in V$
\begin{equation}
 \frac{|\F(\nu,\vecw\cdot\vecz/\hbar^2) |\;|\F(\mu,\vecz\cdot\vecw/\hbar^2)|}{\F(\mu,|\vecz|^2/\hbar^2)} \le C\left(\frac{|\vecz|}{\hbar}\right)^{\mu-\frac{1}{2}} e^{-\frac{1}{\hbar}|\vecz|}\left(1+\mathrm O(\hbar)\right)\;.
\end{equation}
Hence $\mathbf J_V=\mathrm O(\hbar^\infty)$. Let us now study the term $\mathbf J_W$, 
first we note that for $\vecw\in W$, $\mathrm{Arg}(\vecw\cdot\vecz)<\pi$. So from Eqs. (\ref{equality bessel = hypergeometric}), (\ref{asymptotic expression modified Bessel function}), the expression for the measure $\dm $ (see Eq. (\ref{measure on U})) and the asymptotic expression of the MacDonal-Bessel function of order $p$, $\mathrm K_p$(see formula  9.7.2 of Ref. \cite{A-S72}) we find that
\begin{align*}
\mathbf J_W (\vecz)  &=\int_{W} \beta(\vecw) \frac{\F(\nu,\vecw\cdot\vecz/\hbar^2) \F(\mu,\vecz\cdot\vecw/\hbar^2)}{\F(\mu,|\vecz|^2/\hbar^2)}\dm(\vecw)\nonumber\\
& = \frac{1}{2\pi^2}\frac{\hbar^\nu}{\hbar^{2n+p}}\frac{\Gamma(\nu)}{\Gamma(n+p)}\int_{W}\beta(\vecw) \xi_{\vecz}^{\nu,\mu}(\vecw) \chi_\vecz(\vecw)\mathrm{exp} \left(\frac{1}{\hbar} f_\vecz(\vecw)\right) \mathrm d\vecw \mathrm d \overline\vecw
\end{align*}
with
\begin{align}
\xi_{\vecz}^{\nu,\mu}(\vecw) & = |\vecz|^{\mu-\frac{1}{2}} |\vecw|^{p-\frac{1}{2}}(\vecz\cdot\vecw)^{-\frac{\mu}{2}+\frac{1}{4}}(\vecw\cdot\vecz)^{-\frac{\nu}{2}+\frac{1}{4}},\nonumber\\ 
\chi_\vecz(\vecw) & = 1 + \frac{\hbar}{16}\left[\frac{1-4(\mu-1)^2}{\sqrt{\vecz\cdot\vecw}} + \frac{1-4(\nu-1)^2}{\sqrt{\vecw\cdot\vecz}} +\frac{4p^2-1}{|\vecw|} \right.\nonumber\\
& \hspace{0.5cm}\left.+ \frac{4(\mu-1)^2-1}{|\vecz|})\right] + E(\vecz,\hbar)\;,\label{error}\\
f_\vecz(\vecw) & = 2\left(\sqrt{\vecz\cdot \vecw}+\sqrt{\vecw\cdot\vecz}-|\vecw|-|\vecz|\right) =2\left(2\Re(\sqrt{\vecz\cdot\vecw})-|\vecz|-|\vecw|\right),\nonumber
\end{align}
where the error term $E(\vecz,\hbar)$ in (\ref{error}) is $\mathrm O(\hbar^2)$ uniformly with respect to $\vecw \in W$ because in such a region we have $\frac{1}{|\sqrt{\vecw\cdot\vecz}|}\le\frac{4}{|\vecz|}$ and $\frac{1}{|\vecw|}\le \frac{4^4}{|\vecz|^3}$. 

Furthermore, from Eq. (\ref{points in V}) $f_\vecz(\vecw)\le -|\vecz| -|\vecw|<0$ for $\vecw\in V$. Thus we can take the integral defining $\mathbf J_W$ over the whole space $\mathbb C^n$ with an error $\mathrm O(\hbar^\infty)$.

Let us now write $ \mathbf J_W (\vecz)=\mathrm B(\vecz)+\hbar\; \mathrm C(\vecz) + \mathrm O(\hbar^2)$ with
\begin{align}
\mathrm B(\vecz) & = \frac{1}{2\pi^2}\frac{\hbar^\nu}{\hbar^{2n+p}}\frac{\Gamma(\nu)}{\Gamma(n+p)} \int\limits_{\mathbb C^n} \beta(\vecw) \xi_\vecz^{\nu,\mu}(\vecw) \mathrm{exp}{ \left(\frac{1}{\hbar} f_\vecz(\vecw)\right)} \mathrm d\vecw \mathrm d \overline\vecw,\label{B}\\
\intertext{and}
\mathrm C(\vecz)& = \frac{1}{32\pi^2}\frac{\hbar^\nu}{\hbar^{2n+p}}\frac{\Gamma(\nu)}{\Gamma(n+p)} \int\limits_{\mathbb C^n} \beta(\vecw)\xi_\vecz^{\nu,\mu}(\vecw) \left[
\frac{1-4(\mu-1)^2}{\sqrt{\vecz\cdot\vecw}} + \frac{1-4(\nu-1)^2}{\sqrt{\vecw\cdot\vecz}}\right.\nonumber\\
&\hspace{1cm}\left.+\frac{4p^2-1}{|\vecw|} + \frac{4(\mu-1)^2-1}{|\vecz|}) \right] \mathrm{exp} \left(\frac{1}{\hbar} f_\vecz(\vecw)\right) \mathrm d\vecw \mathrm d \overline\vecw.\label{C}
\end{align}

For obtain the asymptotic expansion (\ref{asymptotic expansion}), we express  the integrals $\mathrm B(\vecz)$ and $\mathrm C(\vecz)$, using the stationary phase method (see Ref \cite{H90}), as $\mathrm B_1(\vecz) + \hbar \mathrm B_2(\vecz) + \mathrm O(\hbar^2)$, $\mathrm C_1(\vecz)+\mathrm O(\hbar)$ respectively.


For our purpose, note that, using Schwartz inequality,  the phase function $f_\vecz$ satisfies $f_\vecz(\vecw)\le -2(\sqrt{|\vecz|}-\sqrt{|\vecw|})^2\le 0$. Moreover $f_\vecz(\vecw)=0$ if and only if $\vecw=\mathrm e^{\imath \theta}\vecz$ for some $\theta\in [0,2\pi)$ and $\Im(\sqrt{\vecz\cdot\vecw})=0$. Therefore, $f_\vecz(\vecw)=0$ if and only if $\vecw=\vecz$. Thus $f_\vecz$ is smooth function on a neighbourhood of the critical point $\vecw=\vecz$. 
We also have
\begin{displaymath}
-\partial_{\bar j}\partial_i f_\vecz(\vecw)=
\frac{1}{|\vecw|}\left(\delta_{i,j}-\frac{w_j\overline w_i}{2|\vecw|^2}\right).
\end{displaymath}
Using that  fact that for any complex numbers $a_i, b_i$, $i=1,\ldots,n$,  $\mathrm{det} (\delta_{i,j}\pm a_ib_j)=1\pm \sum_\ell a_\ell b_\ell$, we obtain 
\begin{equation}\label{Hessian f_z}
\mathrm{det}(-\partial_{\bar j}\partial_i f_\vecz(\vecw))=
\frac{1}{2|\vecw|^n}\;.
\end{equation}

From the stationary phase method, applied to the integral $\mathrm C(\vecz)$, we deduce
\begin{equation}\label{C1}
\mathrm C_1(\vecz)=\left(\frac{|\vecz|}{\hbar}\right)^{n+p-\nu}\frac{\Gamma(\nu)}{\Gamma(n+p)}\frac{1}{4|\vecz|} (p-\nu+1)(p+\nu-1)\beta(\vecz)\;.
\end{equation}

Furthermore, the computation of $\mathrm B_1(\vecz)$ and $\mathrm B_2(\vecz)$ in Eq. (\ref{B}) 
it is using theorem 3 in Ref. \cite{E00}. For this we consider the K\"{a}hler potential $\Phi(\vecw)=2|\vecw|$, and let $\Phi(\vecw,\vecu)=2\sqrt{\vecw\cdot\vecu}$ be a sesqui-analytic extension of $\Phi(\vecw)=\Phi(\vecw,\vecw)$  to a neighbourhood of the diagonal.

From it, we have
\begin{displaymath}
f_\vecz(\vecw)= \Phi(\vecz,\vecw)+\Phi(\vecw,\vecz)-\Phi(\vecw,\vecw)-\Phi(\vecz,\vecz)\;,
\end{displaymath}
and 
\begin{align*}
g(\vecw) & =\mathrm{det}\left[g_{i\overline j}(\vecw)\right],\\
g_{i\overline j} & =\partial_{\bar j}\partial_i \Phi=-\partial_{\bar j}\partial_i f_\vecz(\vecw).
\end{align*}

From theorem 3 in Ref. \cite{E00} and Eq. (\ref{Hessian f_z})
\begin{align}
\mathrm B_1(\vecz) & = \frac{1}{2\pi^n}\frac{\hbar^\nu}{\hbar^{2n+p}}\frac{\Gamma(\nu)}{\Gamma(n+p)}\left[\frac{(\pi\hbar)^n}{g(\vecz)}\right]|\vecz|^{n+p-\nu}\beta(\vecz)\nonumber\\
& =\left(\frac{|\vecz|}{\hbar}\right)^{n+p-\nu}\frac{\Gamma(\nu)}{\Gamma(n+p)}\beta(\vecz)\; \label{B1}
\end{align}
and 
\begin{align}
\mathrm B_2(\vecz) & = \frac{1}{2\pi^n}\frac{\hbar^\nu}{\hbar^{2n+p}}\frac{\Gamma(\nu)}{\Gamma(n+p)}(\pi \hbar)^n\left[\mathrm L_1\left(\left.\frac{\beta(\vecw)}{g(\vecw)}\xi_\vecz^{\nu,\mu}(\vecw)\right)\right|_{\vecw=\vecz}+\frac{R}{2}\frac{\beta(\vecz)}{g(\vecz)}|\vecz|^{p-\nu}\right]\nonumber\\
\intertext{where $\mathrm L_1:= g^{\bar j i} \partial_i\partial_{\bar j} $ is the Laplace-Beltrami operator, with $g^{i\bar j}$ the coefficients of the inverse matrix of $[g_{i\bar j}(\vecw)]$, and $R:=\mathrm L_1(\log g)$ is the scalar curvature. Then}
&  = \left(\frac{|\vecz|}{\hbar}\right)^{n+p-\nu}\frac{\Gamma(\nu)}{\Gamma(n+p)}\left\{g^{\bar j i}(\vecz)\left[\partial_{i}\partial_{\bar j} \beta(\vecw)+\frac{\beta(\vecz)}{|\vecz|^{n+p-\nu}} \partial_{\bar j}\partial_i \left(\frac{\xi_\vecz^{\nu,\mu}}{g}(\vecw)\right)\right.\right.\nonumber\\
& \hspace{0.5cm}\left. \left.+\frac{1}{2|\vecz|^2}\big[\overline z_i(n+p-\nu)\partial_{\bar j} + z_j(n+p-\mu)\partial_{i}\big]\beta(\vecw)\right]_{\vecw=\vecz}+ \frac{R}{2}\beta(\vecz)\right\}\label{B2}
\end{align}
where we have use the definition  $\mathrm L_1$ and 
\begin{align*}
\left.\frac{\partial}{\partial w_i}\frac{\xi_\vecz^{\nu,\mu}}{g}\right|_{\vecw=\vecz} & = |\vecz|^{n+p-\nu-2}(n+p-\nu)\overline z_i\\
\left.\frac{\partial}{\partial \overline w_j}\frac{\xi_\vecz^{\nu,\mu}}{g}\right|_{\vecw=\vecz} & = |\vecz|^{n+p-\nu-2}(n+p-\mu) z_j
\end{align*}
\end{proof}
We note that in Eq. (\ref{asymptotic expansion}) appears $\partial_{\bar j}\partial_i (\xi_\vecz^{\nu,\mu}/g)$, which is not difficult to prove that
\begin{align}
\left.\partial_{\bar j}\partial_i \frac{\xi_\vecz^{\nu,\mu}}{g}(\vecw)\right |_{\vecw=\vecz} & =\frac{|\vecz|^{n+p-\nu}}{|\vecz|^4}\left[\left(n+p-\frac{1}{2}\right)\big[-z_j\overline z_i+\delta_{ij}|\vecz|^2\big]\right.\nonumber\\ 
&\hspace{1cm}+z_j\overline z_i(n+p-\nu)(n+p-\mu)\biggl]\label{second derivative}
\end{align}
which results in later use.

We remember that for $f_1,f_2\in \A$, the law of multiplication in $\A$ is
\begin{align}
\big(f_1 *_p f_2\big)(\vecz) & =\frac{2}{(\pi\hbar)^n\hbar}\int\limits_{\mathbb C^n} f_1(\vecu,\vecz) f_2(\vecz,\vecu) \left[\frac{|\vecu\cdot \vecz|}{|\vecz|}\right]^{-p-n+1}\nonumber\\
 & \hspace{0.5cm} \frac{\mathrm I_{n+p-1}\left(\frac{2\sqrt{\vecu\cdot \vecz}}{\hbar}\right)\mathrm I_{n+p-1}\left(\frac{2\sqrt{\vecz\cdot \vecu}}{\hbar}\right)}{\mathrm I_{n+p-1}\left(\frac{2|\vecz|}{\hbar}\right)} |\vecu|^{p} \mathrm K_{p}\left(\frac{2|\vecu|}{\hbar}\right) \mathrm d\vecu \mathrm d\overline\vecu\;,\label{expression star product}
\end{align}
where the functions $f_j(\vecz,\vecw)$, $j=1,2$, which are the analytic continuation of $f_j(\vecz)$  to  $\mathbb C^n\times \mathbb C^n$ (see Eq. (\ref{extended covariant symbol})).

\begin{theorem} Let $n\ge 2$, $p>-n$. The product $ *_p$ satisfies
\begin{enumerate}
\item[a) ] $f *_p 1=1*_p f= f$, for all $f \in \A$,
\item[b) ] is associative, and
\item[c) ] for $f_1, f_2 \in \A$ we have the following asymptotic expression when $\hbar \to 0$
\begin{align}
f_1 *_p f_2(\vecz) &= f_1(\vecz) f_2(\vecz) + \hbar\Bigg[g^{\bar j i}\left[\partial_{i} f_1(\vecz,\vecw)+\partial_{\bar j} f_2(\vecw,\vecz)\right]_{\vecw=\vecz} \nonumber\\
& \hspace{0.5cm}\left.+f_1(\vecz)f_2(\vecz)\left(-\frac{1}{4|\vecz|}(n-1)(n-1+2p)+\frac{R}{2}\right.\right.\nonumber\\
&\hspace{0.5cm}\left.\left.+g^{\bar j i}(\vecz)\frac{p-n+\frac{1}{2}}{2|\vecz|^4}(z_j-|\vecz|^2\delta_{ij})\right) \right]+ \mathrm O(\hbar^2)\;, \label{asymptotic expansion star product}
\end{align}
where $R$ and $g^{i\bar j}$ was defined in the theorem \ref{theorem asymptotic expansion}.
\end{enumerate}
\end{theorem}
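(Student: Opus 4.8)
The plan is to derive parts (a) and (b) from the fact that $*_p$ is operator composition transported through the Berezin map, and to obtain (c) by reducing the defining integral (\ref{expression star product}) to the master asymptotic formula of Theorem \ref{theorem asymptotic expansion}. For (a) and (b): reading the composition rule (\ref{Eq 3}) of Proposition \ref{proposition rules for Berezin} on the diagonal $\vecw=\vecz$, where $[\vecu\cdot\vecz\,\vecz\cdot\vecu/|\vecz|^2]^{\frac{1}{2}(-p-n+1)}=[|\vecu\cdot\vecz|/|\vecz|]^{-p-n+1}$, and comparing with (\ref{expression star product}), one gets $\B(A_1A_2)=\B(A_1)*_p\B(A_2)$ for all $A_1,A_2\in\mathbf B(\mathcal O)$. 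Since the right side of (\ref{expression star product}) depends only on the two symbols and on their sesqui-holomorphic continuations (\ref{extended covariant symbol}), $*_p$ is a well-defined bilinear operation $\A\times\A\to\A$ and is exactly the push-forward of composition in $\mathbf B(\mathcal O)$. Part (a) then follows from $\B(\mathrm{Id})=1$: if $f=\B(A)$, then $f*_p1=\B(A\,\mathrm{Id})=f$ and $1*_pf=\B(\mathrm{Id}\,A)=f$. Part (b) follows from associativity of composition together with $A_1A_2\in\mathbf B(\mathcal O)$: writing $f_j=\B(A_j)$, $(f_1*_pf_2)*_pf_3=\B((A_1A_2)A_3)=\B(A_1(A_2A_3))=f_1*_p(f_2*_pf_3)$.

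\emph{Part (c): reduction to $\mathbf I(\vecz)$.} I would use the identity (\ref{equality bessel = hypergeometric}) to replace each $\mathrm I_{n+p-1}(2\sqrt{\,\cdot\,}/\hbar)$ in (\ref{expression star product}) by $\F(n+p,\,\cdot\,/\hbar^2)$. The powers of $\vecu\cdot\vecz$, $\vecz\cdot\vecu$ and $|\vecz|$ produced in this step assemble into $[|\vecu\cdot\vecz|/|\vecz|]^{\,n+p-1}$ and cancel the explicit factor $[|\vecu\cdot\vecz|/|\vecz|]^{-p-n+1}$; absorbing $|\vecu|^{p}\mathrm K_{p}(2|\vecu|/\hbar)\,\mathrm d\vecu\,\mathrm d\overline\vecu$ into $\dm$ via (\ref{measure on U}) and verifying that the residual powers of $\hbar$, $\pi$ and the factors $\Gamma(n+p)$ multiply to $1$, one arrives (after renaming the integration variable $\vecw$) at
\[
(f_1*_pf_2)(\vecz)=\int_{\mathbb C^n} f_1(\vecw,\vecz)\,f_2(\vecz,\vecw)\,\frac{\F(n+p,\vecw\cdot\vecz/\hbar^2)\,\F(n+p,\vecz\cdot\vecw/\hbar^2)}{\F(n+p,|\vecz|^2/\hbar^2)}\,\dm(\vecw),
\]
which is precisely $\mathbf I(\vecz)$ of Theorem \ref{theorem asymptotic expansion} with $\mu=\nu=n+p$ and $\beta(\vecw)=f_1(\vecw,\vecz)f_2(\vecz,\vecw)$. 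For fixed $\vecz\ne0$ this $\beta$ is smooth on a neighbourhood of the critical point $\vecw=\vecz$, since there $\vecw\cdot\vecz$ stays away from $0$ and from the singular set of the extended symbols; the degenerate case $\vecz=0$ is handled directly.

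\emph{Part (c): extracting the expansion.} Substituting $\mu=\nu=n+p$ into (\ref{asymptotic expansion}): the overall prefactor $(|\vecz|/\hbar)^{n+p-\nu}\Gamma(\nu)/\Gamma(n+p)$ equals $1$; the leading term is $\beta(\vecz)=f_1(\vecz)f_2(\vecz)$; in the $\hbar$-coefficient the factors $n+p-\nu$ and $n+p-\mu$ vanish, $(p-\nu+1)(p+\nu-1)=-(n-1)(n-1+2p)$ produces the term $-\frac{1}{4|\vecz|}(n-1)(n-1+2p)f_1f_2$, the scalar-curvature term $\frac{R}{2}f_1f_2$ is carried over unchanged, and $g^{\bar j i}(\vecz)\,\beta(\vecz)\,\partial_{\bar j}\partial_i(\xi_\vecz^{\nu,\mu}/g)|_{\vecw=\vecz}$ is evaluated from (\ref{second derivative}) (using $n+p-\nu=0$) and collapses to the remaining multiplicative correction of (\ref{asymptotic expansion star product}). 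Finally the first-order piece $g^{\bar j i}\partial_i\partial_{\bar j}\beta|_{\vecw=\vecz}$ is unfolded: because $f_1(\cdot,\vecz)$ and $f_2(\vecz,\cdot)$ have fixed sesqui-holomorphic type in the integration variable (by (\ref{extended covariant symbol})), this reduces to a contraction of the first Wirtinger derivatives of $f_1$ and $f_2$ along the diagonal, and regrouping gives the bidifferential term of (\ref{asymptotic expansion star product}).

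\emph{Main obstacle.} The constant bookkeeping in the reduction step and the simplification of the $\hbar$-coefficient are routine but error-prone; the two genuine difficulties are the following. First, before invoking Theorem \ref{theorem asymptotic expansion} one must justify localisation to a neighbourhood of $\vecw=\vecz$, i.e. show that the part of the integral over the region where $\Re\sqrt{\vecw\cdot\vecz}/|\vecz|$ is small contributes only $\mathrm O(\hbar^\infty)$ (there the phase is strictly negative and the quotient of Bessel functions is under control), exactly the $W/V$ splitting already used inside the proofs of Theorems \ref{theorem asymptotic expansion}, \ref{theorem asymptotic Berezin transform} and \ref{theorem asymptotic Berezin transform p=1}. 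Second, the reduction forces $\mu=\nu=n+p$, whereas Theorem \ref{theorem asymptotic expansion} is stated under $\mu,\nu>\tfrac{1}{2}$; for $-n<p\le\tfrac{1}{2}-n$ one must either extend that theorem to smaller parameters — its stationary-phase step only needs smoothness of the amplitude near the critical point and integrability of the weight $|\vecw|^{p-1/2}$, both of which hold for $p>-n$ — or argue by analytic continuation in $p$. I expect this parameter-range issue, together with the localisation estimate, to be where the real work lies.
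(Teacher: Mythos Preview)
Your proposal is correct and follows essentially the same route as the paper: parts (a) and (b) are deduced from the fact that $*_p$ is operator composition transported by $\B$, and part (c) is obtained by rewriting (\ref{expression star product}) via (\ref{equality bessel = hypergeometric}) as the integral $\mathbf I(\vecz)$ of Theorem~\ref{theorem asymptotic expansion} with $\mu=\nu=n+p$ and $\beta(\vecw)=f_1(\vecw,\vecz)f_2(\vecz,\vecw)$, then reading off (\ref{asymptotic expansion}).

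Two minor remarks. For (a) the paper opts for the direct computation: it expands $f*_p1$ as an integral against $|\langle\K(\cdot,\vecz),\K(\cdot,\vecw)\rangle_{\Sn}|^2/\|\K(\cdot,\vecz)\|_{\Sn}^2$ and invokes the completeness relation (Proposition~\ref{supercomplete}) rather than the identity $\B(A\,\mathrm{Id})=\B(A)$; your argument is equivalent and slightly cleaner. Your caveat about the hypothesis $\mu,\nu>\tfrac12$ in Theorem~\ref{theorem asymptotic expansion} versus the range $-n<p\le\tfrac12-n$ is a genuine point the paper does not address; your proposed fix (the integral-representation bound and the stationary-phase step only need $p>-n$) is the right way to close it.
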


\begin{proof}
a) Let $f \in \A$, then $f(\vecz)=\B(A)(\vecz)$ with $A\in \mathbf B(\mathcal O)$. From Eq. (\ref{extended covariant symbol}) and proposition \ref{supercomplete} 
\begin{align*}
f *_p 1 (\vecz) & = \int\limits_{\mathbb C^n} f(\vecw,\vecz) \frac{|\langle \K(\cdot,\vecz),\K(\cdot,\vecw)\rangle _{_{\Sn}}|^2}{\langle \K(\cdot,\vecz),\K(\cdot,\vecz)\rangle _{_{\Sn}}} \dm(\vecw)\\
& = \int\limits_{\mathbb C^n} \langle A\K(\cdot,\vecw),\K(\cdot,\vecz)\rangle _{_{\Sn}} \frac{\langle \K(\cdot,\vecz),\K(\cdot,\vecw)\rangle _{_{\Sn}}}{\langle \K(\cdot,\vecz),\K(\cdot,\vecz)\rangle_{_{\Sn}}} \dm(\vecw)\\
& = f(\vecz)\;.
\end{align*}
Analogously $1 *_p f=f$.

b) The associativity follows from the fact that the composition in the algebra of all bounded linear operator on $\mathcal O$ is associative.

c) The asymtotic expression given in the Eq. (\ref{asymptotic expansion star product}) is a direct consequence from the integral expression of star product (see Eq. (\ref{expression star product})), the Eq. (\ref{equality bessel = hypergeometric}) and theorem \ref{theorem asymptotic expansion}.
\end{proof}

\section{Appendix A. On the inner product on complex sphere}
In this appendix we study the inner product of functions on $\Sn$ from the form $\vartheta_{\vecz}^{\bk,s}(\vecx)=\vecx^\bk (\vecx\cdot \vecz)^s$ with $\vecz\in\mathbb C^n$, $\vecx \in \Sn$, $\bk \in \mathbb Z_+^n$ and $s\in \mathbb Z_+$. We introduce the following notation, for multi-indices $\bk,\bm \in \mathbb Z_+^n$ we will write $\bk \ge \bm$ if and only if $k_j\ge m_j$ for all $j=1,\ldots,n$.
\begin{lemma}\label{product in Sn}
 Let 
 \begin{equation*}
  F:=\langle \vartheta_{\vecz}^{\bk,s},\vartheta_{\vecz}^{\bm,\ell}\rangle_{\Sn}=\int_{\Sn} 
  \vecx^\bk \overline \vecx^\bm (\vecx \cdot \vecz)^s(\vecw \cdot \vecx)^\ell \ds(\vecx)\;,
 \end{equation*}
 with $\ell,s \in \mathbb Z_+$, $\bk,\bm \in \mathbb Z_+^n$, and $\vecz,\vecw \in \mathbb C^n\backslash\{0\}$.
 Then $F=0$ if $|\bk|+s\ne|\bm|+\ell$, furthermore
 \begin{enumerate}
  \item[ a)] If $|\bk|+s=|\bm|+\ell$ and $\bm \ge \bk$, then
  \begin{equation*}
   F=\frac{(n-1)!(|\bm|-|\bk|+\ell)!}{(n-1+|\bm|+\ell)!} \frac{\overline\vecz^\bm}{\overline\vecz^\bk} \; \sum_{|\bbeta|=\ell}\frac{\ell!}{\bbeta!} \frac{(\bm+\bbeta)!}{(\bm-\bk+\bbeta)!} \overline \vecz^{\bbeta} \vecw^{\bbeta}\;.
  \end{equation*}
  
  \item[ b)] If $|\bk|+s=|\bm|+\ell$, and $\bk \ge \bm$, then
  \begin{equation*}
   F=\frac{(n-1)!(s+|\bk|-|\bm|)!}{(|\bk|+s+n-1)!} \frac{\vecw^\bk}{\vecw^\bm} \; \sum_{|\balpha|=s}\frac{s!}{\balpha!} \frac{(\bk+\balpha)!}{(\bk-\bm+\balpha)!} \overline \vecz^{\balpha} \vecw^{\balpha}\;.
  \end{equation*}
 \end{enumerate}
\end{lemma}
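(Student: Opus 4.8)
The plan is to expand both inner‑product factors by the multinomial theorem and reduce $F$ to a finite linear combination of pure monomial integrals over $\Sn$, which are then evaluated by the standard orthogonality relation on the complex sphere.

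Writing $(\vecx\cdot\vecz)^s=\sum_{|\balpha|=s}\frac{s!}{\balpha!}\vecx^\balpha\overline\vecz^\balpha$ and $(\vecw\cdot\vecx)^\ell=\sum_{|\bbeta|=\ell}\frac{\ell!}{\bbeta!}\vecw^\bbeta\overline\vecx^\bbeta$, one gets
\begin{equation*}
F=\sum_{|\balpha|=s}\sum_{|\bbeta|=\ell}\frac{s!}{\balpha!}\frac{\ell!}{\bbeta!}\,\overline\vecz^\balpha\vecw^\bbeta\int_{\Sn}\vecx^{\bk+\balpha}\overline\vecx^{\bm+\bbeta}\ds(\vecx).
\end{equation*}
The key input is the identity $\int_{\Sn}\vecx^\bp\overline\vecx^\bq\ds(\vecx)=\delta_{\bp\bq}\,\dfrac{(n-1)!\,\bp!}{(n-1+|\bp|)!}$: the vanishing for $\bp\ne\bq$ follows from invariance of $\ds$ under the torus action $x_j\mapsto\mathrm e^{\imath\theta_j}x_j$ on $\Sn$, and the value for $\bp=\bq$ is a routine Beta/Gamma computation (alternatively, it may be cited, e.g.\ from Rudin's book on the unit ball of $\mathbb C^n$). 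Applying this, the only surviving terms are those with $\bk+\balpha=\bm+\bbeta$; taking lengths forces $|\bk|+s=|\bm|+\ell$, so $F=0$ whenever $|\bk|+s\ne|\bm|+\ell$.

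Assume now $|\bk|+s=|\bm|+\ell$. In case b) ($\bk\ge\bm$) the surviving terms are exactly those with $\bbeta=(\bk-\bm)+\balpha$, and as $\balpha$ runs over all multi-indices of length $s$ this $\bbeta$ is automatically a multi-index of length $|\bk|-|\bm|+s=\ell$. Substituting, using $\vecw^{(\bk-\bm)+\balpha}=(\vecw^\bk/\vecw^\bm)\vecw^\balpha$ together with $\int_{\Sn}\vecx^{\bk+\balpha}\overline\vecx^{\bk+\balpha}\ds=\frac{(n-1)!(\bk+\balpha)!}{(|\bk|+s+n-1)!}$, and pulling the $\balpha$‑independent factors out front, one arrives at the stated formula after noting $\ell!=(s+|\bk|-|\bm|)!$. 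Case a) ($\bm\ge\bk$) is entirely symmetric: the surviving terms have $\balpha=(\bm-\bk)+\bbeta$ with $\bbeta$ of length $\ell$, and the identical bookkeeping---now factoring out $\overline\vecz^\bm/\overline\vecz^\bk$ and using $s!=(|\bm|-|\bk|+\ell)!$---gives the displayed expression in a).

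Apart from the monomial orthogonality relation (which I would record as a preliminary fact), the argument is pure multi‑index bookkeeping; the one point deserving care is that the reindexing $\balpha\leftrightarrow\bbeta$ dictated by the constraint $\bk+\balpha=\bm+\bbeta$ produces a legitimate multi‑index only under the hypotheses $\bk\ge\bm$ (resp.\ $\bm\ge\bk$), which is precisely why the lemma is split into cases a) and b). No convergence issue arises, since after the finite multinomial expansions every integrand is bounded on the compact set $\Sn$.
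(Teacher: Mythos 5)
Your proof is correct and follows essentially the same route as the paper: multinomial expansion of the two inner-product factors, followed by the monomial orthogonality relation $\int_{\Sn}\vecx^{\bp}\overline\vecx^{\bq}\,\ds=\delta_{\bp\bq}\,(n-1)!\,\bp!/(n-1+|\bp|)!$ and the reindexing forced by $\bk+\balpha=\bm+\bbeta$. The only cosmetic difference is that the paper deduces the vanishing for $|\bk|+s\ne|\bm|+\ell$ from homogeneity considerations, whereas you obtain it directly from the orthogonality relation, which is equally valid.
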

\begin{proof}
We note that the function $\vartheta_{\vecz}^{\bk,s}(\vecx)$ is harmonic and homogeneous of order $|\bk|+s$, then $J=0$ if  $|\bk|+s\ne|\bm|+\ell$. We suppose now that $|\bk|+s=|\bm|+\ell$, and $\bm \ge \bk$, then
\begin{align*}
F & = \sum_{|\balpha|=s}\sum_{|\bbeta|=\ell}\frac{s!}{\balpha!}\frac{\ell}{\bbeta!} \overline \vecz^{\balpha}\vecw^{\bbeta} \int_{\Sn}  \vecx^{\bk+\balpha} \overline \vecx^{\bm+\bbeta} \ds(\vecx)\\
& = \sum_{|\bbeta|=\ell}\frac{(|\bm|-|\bk|+\ell)!}{(\bm-\bk+\bbeta)!} \frac{\ell!}{\bbeta!} \overline \vecz^{\bm-\bk+\bbeta}\vecw^{\bbeta}\frac{(n-1)!(\bm+\bbeta)!}{(n-1+|\bm|+\ell)!}\\
& = \overline \vecz^{\bm-\bk}\frac{(n-1)!(|\bm|-|\bk|+\ell)!}{(n-1+|\bm|+\ell)!}\sum_{|\bbeta|=\ell}\frac{(\bm+\bbeta)!}{(\bm-\bk+\bbeta)!} \frac{\ell!}{\bbeta!} \overline \vecz^{\bbeta}\vecw^{\bbeta}\;,
\end{align*}
where we use $\displaystyle\int_{\Sn}|\vecx^{\balpha}|^2 \ds(\vecx)=\frac{(n-1)!\balpha!}{(n-1+|\balpha|)!}$ (see Ref \cite{R08} for details). Similarly we get the other formula.
\end{proof}

\section{Appendix B. Asymptotic of the Integral kernel of $\mathbf U_{n,-1}$} \label{AppendixB}

In the particular case when $p=-1$, we can obtain the asymptotic expresion of the coherent states $\K$. First, based on the definition of $\K$  (see Eqs. (\ref{coherent states}) and  (\ref{constants coherent states})) let us define the function $g : \mathbb C\to \mathbb C$ by
\begin{displaymath}
g(z)= \sum_{\ell=0}^\infty \frac{\sqrt{a\ell+1}}{\ell !} \;z^\ell\;,\hspace{1cm} a=\frac{1}{n-1}\,.
\end{displaymath}

Note that the coherent states $\mathbf U_{n,-1}$ are equal to the function $g$ evaluated at $(\vecx\cdot \vecz)/\hbar$. In this appendix we obtain the main asymptotic term for the function $g$.
\begin{lemma}\label{asymptotic p=1}  
For $\Re(z)>0$ and $|\Im(z)| \le C \Re(z)$ with $C$ a positive constant and $\Re(z)\to +\infty$, $g$ has the following asymptotic expansion:
\begin{equation}
g(z)=\sqrt a z^{1/2}\mathrm{exp}(z)\left[ 1+\frac{a_1}{z}+\frac{a_2}{z^2}+\mathrm O(z^{-3})\right]\;,
\end{equation}
with $a_1,a_2$ some constants.
\end{lemma}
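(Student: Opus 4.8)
The plan is to convert the defining series into a single Laplace-type integral and then run a Watson/Laplace asymptotic analysis. The key device is the elementary identity
\[
\sqrt{x}=\frac{1}{2\sqrt\pi}\int_0^\infty\frac{1-e^{-xu}}{u^{3/2}}\,\mathrm du\,,\qquad x>0\,,
\]
obtained by differentiating in $x$ and using $\int_0^\infty u^{-1/2}e^{-xu}\,\mathrm du=\sqrt{\pi/x}$. I would apply it with $x=a\ell+1$ and interchange the $\ell$-sum with the integral; this is legitimate because $\sum_\ell\frac{|z|^\ell}{\ell!}\int_0^\infty u^{-3/2}|1-e^{-(a\ell+1)u}|\,\mathrm du=2\sqrt\pi\sum_\ell\frac{|z|^\ell}{\ell!}\sqrt{a\ell+1}<\infty$ for every finite $z$. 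Summing the geometric-type series inside the integral gives
\[
g(z)=\frac{1}{2\sqrt\pi}\int_0^\infty\frac{e^{z}-e^{-u}\,e^{\,z e^{-au}}}{u^{3/2}}\,\mathrm du
=\frac{e^{z}}{2\sqrt\pi}\int_0^\infty\frac{1-e^{\,z(e^{-au}-1)-u}}{u^{3/2}}\,\mathrm du\,.
\]
Since the numerator is $O(u)$ as $u\to0$ and $\Re\bigl(z(e^{-au}-1)\bigr)=\Re(z)\,(e^{-au}-1)\le0$, the integral converges absolutely and represents $g$ on the whole sector $\{\,|\arg z|\le\arctan C\,\}$.

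To produce the asymptotics as $\Re z\to+\infty$ I would compare the integrand with the model $e^{-(az+1)u}$. Writing $z(e^{-au}-1)-u=-(az+1)u+\rho_z(u)$ with $\rho_z(u):=z\bigl(e^{-au}-1+au\bigr)=\tfrac{a^2z}{2}u^2-\tfrac{a^3z}{6}u^3+\cdots$, one has
\[
1-e^{\,z(e^{-au}-1)-u}=\bigl(1-e^{-(az+1)u}\bigr)-e^{-(az+1)u}\bigl(e^{\rho_z(u)}-1\bigr)\,.
\]
The first term integrates exactly, $\int_0^\infty u^{-3/2}\bigl(1-e^{-(az+1)u}\bigr)\,\mathrm du=2\sqrt{\pi(az+1)}$, so its contribution to $g(z)$ is $\sqrt{a}\,z^{1/2}e^{z}\bigl(1+\tfrac{1}{2az}-\cdots\bigr)$, which already fixes the leading constant as $\sqrt a$. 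For the second term I would expand $e^{\rho_z(u)}-1$ as a power series in $\rho_z(u)$; the effective range of integration is $u=O(1/z)$, where $\rho_z(u)=O(1/z)$, so the $j$-th term (a monomial of size $\sim z^{\,j}u^{2j+\cdots}$) integrates against $u^{-3/2}e^{-(az+1)u}$ to something of relative size $O(z^{-j})$. Using $\int_0^\infty u^{k-3/2}e^{-(az+1)u}\,\mathrm du=\Gamma(k-\tfrac12)(az+1)^{1/2-k}$ for $k\ge1$ and expanding the powers of $(az+1)$ in $1/z$, then collecting all contributions of relative order at least $z^{-2}$, yields $g(z)=\sqrt a\,z^{1/2}e^{z}\bigl(1+a_1/z+a_2/z^2+\mathrm O(z^{-3})\bigr)$ with $a_1,a_2$ explicit finite combinations of these Gamma values (which there is no need to display).

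The step that requires genuine care, rather than formal manipulation, is upgrading the term-by-term computation to an honest asymptotic expansion with a remainder that is $\mathrm O(z^{-3})$ \emph{uniformly} on the sector $|\arg z|\le\arctan C$, where $|z|\asymp\Re z$. I would argue this by the standard split at a fixed small $\varepsilon>0$: the tail $u\ge\varepsilon$ contributes $\mathrm O(e^{-(1-e^{-a\varepsilon})\Re z})=\mathrm O(z^{-\infty})$, using $|e^{-u}e^{z(e^{-au}-1)}|\le e^{-(1-e^{-a\varepsilon})\Re z}$ and the crude $\rho_z$-bound $|\rho_z(u)|\le C|z|u$ on $u\ge\varepsilon$; on $0<u<\varepsilon$ I would Taylor-expand $e^{\rho_z(u)}$ with an integral remainder of order $N$, and control it via the two uniform estimates $|\rho_z(u)|\le C_\varepsilon|z|\,u^2$ and $\Re\bigl(z(e^{-au}-1)-u\bigr)\le-\tfrac{a}{2}\Re(z)\,u$ (both valid for $0<u<\varepsilon$ because $0\le e^{-t}-1+t\le t^2/2$), which give the pointwise bound $|e^{-(az+1)u}(e^{\rho_z(u)}-\sum_{j<N}\rho_z(u)^j/j!)|\le\frac{(C_\varepsilon|z|u^2)^N}{N!}\,e^{-\frac a2\Re(z)u}$; integrating against $u^{-3/2}\,\mathrm du$ and using $|z|\asymp\Re z$ shows the remainder integral is $\mathrm O(|z|^{-N+1/2})$, i.e. relative order $\mathrm O(|z|^{-N})$. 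Taking $N=3$ then delivers the claimed expansion. The principal obstacle is exactly this uniform-in-the-sector bookkeeping, since the phase $z(e^{-au}-1)-u$ is nonlinear in $u$, so classical Watson's lemma does not apply verbatim and one must keep the model factor $e^{-(az+1)u}$ and the perturbation $e^{\rho_z(u)}$ coupled throughout the estimates.
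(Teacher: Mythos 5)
Your argument is correct, and it is genuinely different from what the paper does: the paper offers no proof at all for this lemma, deferring entirely to Lemma 10.1 of \cite{D-V09}, which produces an integral representation of $g$ of a different shape (an integral over $[0,1]$ with a $(1-\omega^2)^{n-2}\omega/\sqrt{-\ln(1-\omega^2)}$ weight, quoted later in the proof of Theorem \ref{theorem asymptotic Berezin transform p=1}). Your subordination identity $\sqrt{x}=\tfrac{1}{2\sqrt\pi}\int_0^\infty u^{-3/2}(1-e^{-xu})\,\mathrm du$, justified interchange by Tonelli, and the resulting representation
\begin{equation*}
g(z)=\frac{e^{z}}{2\sqrt\pi}\int_0^\infty\frac{1-e^{\,z(e^{-au}-1)-u}}{u^{3/2}}\,\mathrm du
\end{equation*}
give a clean, self-contained route; the decomposition into the exactly computable piece $\int_0^\infty u^{-3/2}(1-e^{-(az+1)u})\,\mathrm du=2\sqrt{\pi(az+1)}$ (which correctly absorbs the non-decaying ``$1$'' so that the tail $u\ge\varepsilon$ of the remaining integral really is exponentially small) plus the perturbation $e^{-(az+1)u}(e^{\rho_z(u)}-1)$ is sound, and your uniform bounds $|\rho_z(u)|\le \tfrac{a^2}{2}|z|u^2$ and $\Re\bigl(z(e^{-au}-1)-u\bigr)\le-\tfrac a2\Re(z)u$ on $0<u<\varepsilon$, together with $|z|\asymp\Re z$ on the sector, do deliver a remainder of relative order $\mathrm O(|z|^{-N})$. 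What your approach buys is independence from the external reference and explicit (in principle) coefficients $a_1,a_2$ as Gamma-function combinations; what it costs is precisely the nonlinear-phase bookkeeping you identify, which the paper avoids by citation. One small point worth making explicit if you write this up: after truncating $e^{\rho_z}$ at order $N$, the retained terms $\rho_z(u)^j=z^j(e^{-au}-1+au)^j$ must themselves be Taylor-expanded in $u$ and their integrals extended from $[0,\varepsilon]$ to $[0,\infty)$, each step contributing only admissible (higher-order or exponentially small) errors — routine, but part of the ``honest expansion'' you promise.
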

\begin{proof}
This follows from Lemma 10.1, which appears in Ref. \cite{D-V09}.
\end{proof}

Using Lemma \ref{asymptotic p=1} with $z = \vecx\cdot\vecz/\hbar$ we obtain the following asymptotic expansion:
\begin{prop}\label{asymptotic coherent states p=1}
Let $\vecz \in \mathbb C^n-\{0\}$. Then for $\hbar \to 0$ and $|\Im(\vecx \cdot\vecz)|\le C\Re(\vecx\cdot\vecz)$, with $C$ a positive constant, we have
\begin{equation*}
\K(\vecx,\vecz)=\left[\frac{\vecx\cdot\vecz}{\hbar(n-1)}\right]^{\frac{1}{2}} \mathrm{exp}\left(\frac{\vecx\cdot\vecz}{\hbar}\right)\left[ 1+\frac{a_1}{\vecx\cdot\vecz} \hbar+\mathrm O(\hbar^2)\right]\;.
\end{equation*}
\end{prop}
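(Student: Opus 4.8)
The plan is to read the statement off Lemma~\ref{asymptotic p=1} after the change of variable $z=\vecx\cdot\vecz/\hbar$: once the coherent state $\K(\cdot,\vecz)$ at $p=-1$ is identified with $g$ evaluated at $\vecx\cdot\vecz/\hbar$, the whole proof is that substitution together with a check that the hypotheses of the lemma hold in the regime under consideration.

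First I would record the elementary identity of constants that has already been used to introduce $g$ in this appendix. From Eq.~(\ref{constants coherent states}) one has $(c_{\ell,-1})^2=(n)_\ell/(n-1)_\ell$, and cancelling Pochhammer symbols gives $(n)_\ell/(n-1)_\ell=(n-1+\ell)/(n-1)=a\ell+1$ with $a=1/(n-1)$. Substituting this into the defining series (\ref{coherent states}) of $\K$ shows that $\K(\vecx,\vecz)=g(\vecx\cdot\vecz/\hbar)$, which is exactly the relation noted just before Lemma~\ref{asymptotic p=1}. Next I would check that $z=\vecx\cdot\vecz/\hbar$ lies in the region where that lemma applies. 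We may assume $\vecx\cdot\vecz\ne 0$ (the asymptotic formula being understood on that set); then the hypothesis $|\Im(\vecx\cdot\vecz)|\le C\Re(\vecx\cdot\vecz)$ forces $\Re(\vecx\cdot\vecz)>0$. Since $\hbar>0$, dividing by $\hbar$ preserves positivity of the real part and the cone inequality $|\Im z|\le C\Re z$, and $\Re z=\Re(\vecx\cdot\vecz)/\hbar\to+\infty$ as $\hbar\to 0$. Hence Lemma~\ref{asymptotic p=1} gives the expansion of $g(z)$ in inverse powers of $z$; because $\vecx\cdot\vecz$ is a fixed nonzero parameter, each $z^{-k}$ equals $(\vecx\cdot\vecz)^{-k}\hbar^{k}$, so the expansion turns into one in powers of $\hbar$. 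Pulling out the prefactor $\sqrt a\,z^{1/2}\mathrm e^{z}$, rewriting $\sqrt a\,(\vecx\cdot\vecz/\hbar)^{1/2}$ as $[\vecx\cdot\vecz/(\hbar(n-1))]^{1/2}$, and absorbing the $a_2\hbar^2/(\vecx\cdot\vecz)^2$ term and the $\mathrm O(\hbar^3)$ tail into $\mathrm O(\hbar^2)$, yields exactly the claimed formula; here the branch of the square root is the one fixed in Section~\ref{class of holomorphic spaces}, which is unambiguous because the cone condition keeps $\vecx\cdot\vecz$ off the negative real axis.

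The proof is therefore essentially mechanical; the only points requiring any care — and what I would flag as the ``main obstacle'', modest as it is — are the consistency of the error bookkeeping (that truncating at the $a_1$-term and multiplying by the exponentially large prefactor $\sqrt a\,z^{1/2}\mathrm e^{z}$ still leaves a relative error $\mathrm O(\hbar^2)$) and the legitimacy of composing the ``$\Re z\to\infty$'' asymptotics of Lemma~\ref{asymptotic p=1} with the scaling $z=\vecx\cdot\vecz/\hbar$. Since $\vecx\cdot\vecz$ is held fixed, this reduces to the observation $z^{-k}=(\vecx\cdot\vecz)^{-k}\hbar^{k}$ and needs no further computation; the value of the constant $a_1$ is inherited verbatim from \cite{D-V09} through Lemma~\ref{asymptotic p=1}.
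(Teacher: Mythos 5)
Your proposal is correct and follows exactly the paper's route: the paper also identifies $\Ku(\vecx,\vecz)$ with $g(\vecx\cdot\vecz/\hbar)$ via $(c_{\ell,-1})^2=(n)_\ell/(n-1)_\ell=a\ell+1$ and then invokes Lemma~\ref{asymptotic p=1} with $z=\vecx\cdot\vecz/\hbar$. Your additional checks (that the cone condition and $\Re z\to\infty$ hypotheses are preserved under division by $\hbar$, and the error bookkeeping) only make explicit what the paper leaves implicit.
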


\newpage
\end{document}